\theoremstyle{definition}
\newtheorem{theorem}{Theorem}[section]
\newtheorem{lemma}[theorem]{Lemma}
\newtheorem{proposition}[theorem]{Proposition}
\newtheorem{conjecture}{Conjecture}
\newtheorem{claim}{Claim}
\newtheorem{case}{Case}[theorem]
\newtheorem{subcase}{Case}[case]
\newtheorem{step}{Step}[section]
\newtheorem{example}{Example}[section]
\newtheorem*{definition}{Definition}
\theoremstyle{remark}
\newtheorem{remark}{Remark}[section]
\crefname{claim}{Claim}{Claims}
\crefname{case}{Case}{Cases}
\crefname{subcase}{Case}{Cases}
\crefname{conjecture}{Conjecture}{Conjectures}
\newcounter{parenum}[theorem]
\newcommand{\parenum}{\par\noindent\stepcounter{parenum}\textbf{(\roman{parenum}) }}
\let\eqref\labelcref
\crefname{equation}{}{}
\crefname{enumi}{}{}
\newenvironment{boldproof}[1][\proofname]{\par
  \pushQED{\qed}%
  \normalfont \topsep6\p@\@plus6\p@\relax
  \trivlist
  \item[\hskip\labelsep
        \bfseries
    #1\@addpunct{.}]\ignorespaces
}{%
  \popQED\endtrivlist\@endpefalse
}
\newcommand{\pathto}[2]{\ensuremath{(#1,#2)}}
\newcommand{\dotsp}{\cdots}
\newcommand{\dir}[1]{\smash{\vec{#1}}}
\newlength{\revdirraise}
\newlength{\revdirextraraise}
\newcommand{\revdir}[1]{\smash{%
	\settoheight{\revdirraise}{$#1$}%
	\settodepth{\revdirextraraise}{$#1$}%
	\ooalign{$#1$\cr{\raisebox{\revdirraise+\revdirextraraise+0.85pt}%
		{\rlap{$\mkern4.1mu$\rotatebox[origin=c]{180}%
			{$\vec{\phantom #1}$}}}}}}}
\let\overrightarrow\dir
\let\overleftarrow\revdir
\newcommand{\open}{^\circ}
\DeclarePairedDelimiter{\card}{\lvert}{\rvert}
\DeclarePairedDelimiter{\set}{\lbrace}{\rbrace}
\DeclarePairedDelimiterX{\defset}[2]{\lbrace}{\rbrace}
	{\,#1:#2\,}
\newcommand{\internalvertex}[2]{node [circle, inner sep=0pt, outer sep=0pt, minimum size=4pt, fill=#1, draw=#2] {}}
\newcommand{\vertex}{\internalvertex{black}{black}}
\newcommand{\point}{\node [inner sep=0pt,outer sep=0pt,minimum size=0pt]}
\newcommand{\ellipsisdot}{node [circle, inner sep=0pt, outer sep=0pt, minimum size=2.5pt, fill=black, draw=black] {}}
\def\upstrut(#1,#2){\node at (0,#2) [anchor=south] {\vphantom{#1}}}
\def\downstrut(#1,#2){\node at (0,#2) [anchor=north] {\vphantom{#1}}}
\newcommand{\figtoolocaldirac}{%
\begin{tikzpicture}
\foreach \x in {-2,0,2}{
	\draw
		(\x,1) -- (\x,0) -- (\x+1,1)
			-- (\x+1,0) -- (\x,1) -- (\x+1,1)
		(\x,0) -- (0,-1.5) -- (\x+1,0)
			-- (1,-1.5) -- (\x,0) -- (\x+1,0)
		(\x,0) \vertex
		(\x,1) \vertex
		(\x+1,0) \vertex
		(\x+1,1) \vertex;}
\draw
	(0,-1.5) -- (1,-1.5)
	(0,-1.5) \vertex
	(1,-1.5) \vertex;
\end{tikzpicture}}
\newcommand{\figthreeballnottwoconnected}{%
\begin{tikzpicture}[scale=1.4]
\point (a) at (0.1,0) {};
\draw [overlay] (-.1,-2.5)
	\foreach \x/\y in {1/55,2/75,3/95,4/115}{
		+(\y:2) node (b\x) [inner sep=0pt,outer sep=0pt] {}};
\draw [overlay] (0.1,0.3)
	\foreach \x/\y in {1/-65,2/-85,3/-105,4/-125}{
		+(\y:2) node (c\x) [inner sep=0pt,outer sep=0pt] {}};
\draw [overlay] (-.2,-4.3)
		+(120:2) node (d) [inner sep=0pt,outer sep=0pt] {};
\draw [overlay] (.55,-4.5)
	\foreach \x/\y in {1/50,2/70,3/90,4/110}{
		+(\y:2) node (d\x) [inner sep=0pt,outer sep=0pt] {}};
\draw [overlay] (0.3,-1.8)
	\foreach \x/\y in {1/-60,2/-80,3/-100,4/-120}{
		+(\y:2) node (e\x) [inner sep=0pt,outer sep=0pt] {}};
\foreach \x in {1,2,3,4}{
	\draw (a) -- (b\x);
	\draw (c1) -- (d\x);
	\draw (c\x) -- (d);
	\draw (d) -- (e\x);
	\foreach \y in {1,...,4}{
		\draw (b\x) -- (c\y);
		\draw (d\x) -- (e\y);}
	\foreach \y in {\x,...,4}{
		\draw (b\x) -- (b\y);
		\draw (c\x) -- (c\y);
		\draw (d\x) -- (d\y);
		\draw (e\x) -- (e\y);}}
\draw (a) \vertex node [above right] {$v$};
\draw (d) \vertex;
\foreach \x in {1,2,3,4}{
	\draw (b\x) \vertex;
	\draw (c\x) \vertex;
	\draw (d\x) \vertex;
	\draw (e\x) \vertex;}
\end{tikzpicture}}
\newcommand{\figlocaldirac}{%
\begin{tikzpicture}[yscale=0.5]
\foreach \x in {-0.8,0.8}{
	\foreach \y in {-1.5,1.5}{
		\draw (0,\x) -- (1,\y);
		\draw (4,\y) -- (5,\x);}}
\foreach \x in {-1.5,1.5}{
	\foreach \y in {-1,0,1}{
		\draw (1,\x) -- (2,\y);
		\draw (3,\y) -- (4,\x);}}
\foreach \x in {-1,0,1}{
	\foreach \y in {-1,0,1}{
		\draw (2,\y) -- (3,\x);}}
\draw (0,-0.8) -- (0,0.8);
\draw (5,-0.8) -- (5,0.8);
\draw (2,-1) -- (2,1) to [out=255,in=105] (2,-1); 
\draw (3,-1) -- (3,1) to [out=285,in=75] (3,-1); 
\foreach \x in {-0.8,0.8}{
	\draw (0,\x) \vertex;
	\draw (5,\x) \vertex;}
\foreach \x in {-1.5,1.5}{
	\draw (1,\x) \vertex;
	\draw (4,\x) \vertex;}
\foreach \x in {-1,0,1}{
	\draw (2,\x) \vertex;
	\draw (3,\x) \vertex;}
\foreach \x in {-0.7,0,0.7}{
	\draw (1,\x) \ellipsisdot;
	\draw (4,\x) \ellipsisdot;}
\draw [rounded corners=0.25cm] (0.75,-2) rectangle (1.25,2);
\draw [rounded corners=0.25cm] (3.75,-2) rectangle (4.25,2);
\draw (1,-2) node [anchor=north] {$K_6$};
\draw (4,-2) node [anchor=north] {$K_6$};
\end{tikzpicture}}
\begin{document}

\title{A localization method in Hamiltonian graph theory}
\author{Armen S. Asratian%
	\footnote{Department of Mathematics, Linköping University, email: armen.asratian@liu.se},
	Jonas B. Granholm%
	\footnote{Department of Mathematics, Linköping University, email: jonas.granholm@liu.se},
	Nikolay K. Khachatryan%
	\footnote{Synopsys Armenia CJSC, email: nikolay@synopsys.com}}
\date{}
\maketitle


\begin{abstract}
\noindent
The classical global criteria for the existence
of Hamilton cycles only apply to
graphs with large edge density and small diameter.
In a series of papers Asratian and Khachatryan
developed local criteria for the existence
of Hamilton cycles in finite connected graphs,
which are analogues of the classical global criteria due to
Dirac~(1952)\nocite{dirac52}, Ore~(1960)\nocite{ore60},
Jung~(1978)\nocite{jung78}, and Nash-Williams~(1971)\nocite{nash-williams71}.
The idea was to show that the global concept of Hamiltonicity can,
under rather general conditions,
be captured by local phenomena, using the structure of balls of small radii.
(The ball of radius~$r$ centered at a vertex~$u$ is a subgraph of~$G$ induced by
the set of vertices whose distances from~$u$ do not exceed~$r$.)
Such results are called \emph{localization theorems}
and present a possibility to extend known classes
of finite Hamiltonian graphs.

In this paper we formulate a general approach for finding localization theorems
and use this approach to formulate local analogues of well-known results of
Bauer et~al.~(1989)\nocite{bauer89}, Bondy~(1980)\nocite{bondy80},
H\"aggkvist and Nicoghossian (1981)\nocite{haggkvist81},
and Moon and Moser~(1963)\nocite{moon63}.
Finally we extend two of our results to infinite locally finite graphs
and show that they guarantee the existence of Hamiltonian curves,
introduced by K\"undgen, Li and Thomassen~(2017)\nocite{kundgen17}.
\end{abstract}

\bgroup
\noindent
\setlength{\parfillskip}{0pt}%
\textbf{Keywords: }%
Hamilton cycle, local conditions, infinite graphs, Hamilton curve
\par
\egroup


\section{Introduction}

We use~\cite{diestel} for terminology and notation not defined here,
and consider graphs without loops and multiple edges only.
A graph~$G$ is called \emph{locally finite} if every vertex of~$G$ has finite degree.
A graph~$G$ is finite or infinite according to the number of vertices in~$G$.

Let $V(G)$ and $E(G)$ denote, respectively, the vertex set and edge set of a graph~$G$,
and let $d_G(u)$ or simply $d(u)$ denote the degree of a vertex~$u$.
The distance between vertices $u$ and $v$ in~$G$
is denoted by $d_G(u,v)$ or simply $d(u,v)$.
The greatest distance between any two vertices in~$G$ is the \emph{diameter} of~$G$.
For each vertex $u\in V(G)$ we denote by $N_r(u)$ and $M_r(u)$
the set of all vertices $v\in V(G)$ with $d(u,v)=r$ and $d(u,v)\leq r$, respectively.
The set $N_1(u)$ is usually denoted by $N(u)$.
The \emph{ball of radius~$r$ centered at~$u$}, denoted by $G_r(u)$, is the subgraph of~$G$ induced by the set $M_r(u)$.

A \emph{Hamilton cycle} of a finite graph~$G$ is a cycle containing every vertex of~$G$.
A graph that has a Hamilton cycle is called \emph{Hamiltonian}.
There is a vast literature in graph theory devoted to
obtaining sufficient conditions for Hamiltonicity
(see, for example, the surveys~\cite{gould03,gould14}).
Almost all of the existing sufficient conditions for a finite graph~$G$ to
be Hamiltonian contain some global parameters of~$G$ (such as the number of vertices)
and only apply to graphs with large edge density (\,$|E(G)|\geq \text{constant}\cdot|V(G)|^2$\,)
and/or small diameter (\,$o(|V(G)|)$\,).

The following two classical theorems are examples of such results:
\begin{theorem}[Dirac~\cite{dirac52}]
\label{oldthm:dirac}
A finite graph~$G$ on at least three vertices is Hamiltonian if
$d(u)\geq |V (G)|/ 2$ for every vertex~$u\in V(G)$.
\end{theorem}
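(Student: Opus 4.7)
The plan is to use the standard extremal-path argument, which is the classical approach and fits naturally with the localization philosophy of the paper, since the key step only uses the degree condition at two particular vertices.

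First I would show that $G$ is connected. If $G$ had two or more components, then each component would contain at least $|V(G)|/2 + 1$ vertices (because every vertex needs that many neighbors inside its own component), which contradicts $|V(G)| < 2(|V(G)|/2 + 1)$. Next, let $P = v_1 v_2 \dotsm v_k$ be a longest path in $G$. Maximality forces every neighbor of $v_1$ and every neighbor of $v_k$ to lie on $P$.

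The heart of the proof is then to produce a cycle on the vertex set of $P$. Consider the index sets
\[
S = \defset{i \in \set{1, \dotsc, k-1}}{v_1 v_{i+1} \in E(G)}, \qquad
T = \defset{i \in \set{1, \dotsc, k-1}}{v_i v_k \in E(G)}.
\]
By the degree hypothesis, $|S| = d(v_1) \geq |V(G)|/2$ and $|T| = d(v_k) \geq |V(G)|/2$, so $|S| + |T| \geq |V(G)| \geq k > k - 1$. Since $S, T \subseteq \set{1, \dotsc, k-1}$, pigeonhole yields some $i \in S \cap T$, giving edges $v_1 v_{i+1}$ and $v_i v_k$. Rerouting $P$ via these two edges produces a cycle $C = v_1 v_2 \dotsm v_i v_k v_{k-1} \dotsm v_{i+1} v_1$ whose vertex set is exactly $V(P)$.

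Finally I would show $V(C) = V(G)$. If not, connectedness provides a vertex $w \notin V(C)$ adjacent to some $v_j \in V(C)$; detaching the edge of $C$ incident with $v_j$ and appending $w$ would produce a path longer than $P$, contradicting the choice of $P$. Hence $C$ is a Hamilton cycle. I expect the main obstacle for a reader to be the pigeonhole step producing the two usable chords; the rest is bookkeeping with longest paths.
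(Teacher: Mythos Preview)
Your proof is correct and is the standard extremal-path argument for Dirac's theorem. Note, however, that the paper does not give its own proof of this statement: \cref{oldthm:dirac} is quoted as a classical result with a citation to Dirac~\cite{dirac52}, and is used only as a starting point for the localization discussion. So there is no paper-proof to compare against; your argument stands on its own as the conventional one.
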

\begin{theorem}[Ore~\cite{ore60}]
\label{oldthm:ore}
A finite graph~$G$ on at least three vertices is Hamiltonian if
$d(u) + d(v)\geq |V(G)|$ for every pair of nonadjacent vertices $u,v\in V(G)$.
\end{theorem}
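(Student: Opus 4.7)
The plan is to proceed by contradiction, using the standard edge-maximality reduction followed by a rotation argument on a Hamilton path. First, if $G$ is a putative counterexample, I pass to an edge-maximal non-Hamiltonian spanning supergraph $G^*$: the degree-sum hypothesis is preserved under edge additions (adding edges only increases degrees), and $G^*$ cannot be the complete graph since $K_n$ is Hamiltonian for $n \geq 3$. We may therefore assume that $G$ itself is non-Hamiltonian but $G + xy$ is Hamiltonian for every nonadjacent pair $x, y \in V(G)$.

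Next I fix two nonadjacent vertices $u, v \in V(G)$. By maximality every Hamilton cycle of $G + uv$ must use the new edge $uv$, so $G$ contains a Hamilton path $v_1 v_2 \cdots v_n$ with $v_1 = u$ and $v_n = v$. The crucial rotation observation is that if $u v_i \in E(G)$ for some $i$ with $2 \leq i \leq n$, then $v v_{i-1} \notin E(G)$; otherwise the cycle $v_1 v_2 \cdots v_{i-1} v_n v_{n-1} \cdots v_i v_1$ would be a Hamilton cycle of~$G$.

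Consequently the shift $v_i \mapsto v_{i-1}$ injects $N(u) \subseteq \{v_2, \ldots, v_n\}$ into $\{v_1, \ldots, v_{n-1}\} \setminus N(v)$. Since every neighbor of $v$ lies in $\{v_1, \ldots, v_{n-1}\}$, this yields $d(u) \leq (n-1) - d(v)$, so $d(u) + d(v) \leq n - 1 < |V(G)|$, contradicting the hypothesis. There is no substantive obstacle beyond routine bookkeeping: one must verify that the shifted vertices are distinct and avoid $v$ itself, but both facts follow immediately from injectivity of indexing and from $v_n = v \notin \{v_1, \ldots, v_{n-1}\}$.
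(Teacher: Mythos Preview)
Your argument is correct: this is the standard maximality-and-rotation proof of Ore's theorem, and the bookkeeping you flag is handled properly. Note, however, that the paper does not supply its own proof of this statement---Ore's theorem is merely stated as a classical result and cited to~\cite{ore60}---so there is no in-paper proof to compare against.
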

Graphs satisfying these conditions are called \emph{Dirac graphs} and \emph{Ore graphs}, respectively.

Another type of sufficient conditions for Hamiltonicity of a finite graph~$G$,
which contains no global parameter of~$G$,
was found by Oberly and Sumner~\cite{oberly79}.
A graph is called \emph{claw-free} if it has no induced subgraph isomorphic to $K_{1,3}$,
and \emph{locally connected} if
the subgraph induced by the set $N(u)$ of neighbors of $u$ is connected
for each vertex $u$.

\begin{theorem}[Oberly and Sumner~\cite{oberly79}]
\label{oldthm:oberly}
Every finite, connected, locally connected, claw-free graph on at least three vertices is Hamiltonian.
\end{theorem}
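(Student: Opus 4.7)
The plan is to argue by contradiction. Suppose $G$ satisfies the hypotheses but is not Hamiltonian, and let $C$ be a longest cycle in $G$ (such a cycle exists because any locally connected graph on at least three vertices contains a triangle). By assumption $V(C)\neq V(G)$, so connectedness yields a vertex $x\notin V(C)$ with a neighbor $u\in V(C)$; let $u^-$ and $u^+$ denote the two neighbors of $u$ along $C$. The first step is to apply the claw-free hypothesis to the induced subgraph on $\{u,x,u^-,u^+\}$: since this cannot be a $K_{1,3}$ centered at $u$, at least one of the three edges $xu^-$, $xu^+$, $u^-u^+$ must be present in $G$.

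Next I would dispose of the easy subcase: if $xu^+\in E(G)$, then replacing the edge $uu^+$ of $C$ by the detour $u\,x\,u^+$ produces a cycle strictly longer than $C$, contradicting the maximality of $C$, and the same works for $xu^-$. Hence $x$ is adjacent to neither $u^-$ nor $u^+$, which forces $u^-u^+\in E(G)$.

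The final step invokes local connectedness. Since $G[N(u)]$ is connected and contains both $x$ and $u^+$, there is a path in $G[N(u)]$ from $x$ to $\{u^-,u^+\}$; choose one of minimum length, say $x=z_0,z_1,\dotsc,z_k$ with $z_k\in\{u^-,u^+\}$ and $z_i\notin\{u^-,u^+\}$ for $i<k$ (note $k\geq 2$ by the previous paragraph). The idea is to combine the chord $u^-u^+$, the vertex $u$ itself, and the $N(u)$-path $z_0,\dotsc,z_k$ to reroute $C$ through $x$ (and possibly some of the intermediate $z_i$) so as to produce a cycle strictly longer than $C$, contradicting the choice of $C$. A natural organisation is induction on $k$: the base case uses only $x$, $z_1$ and the chord $u^-u^+$, while for larger $k$ one peels off $z_k$ and reapplies the claw-free argument to the triple $(u,z_{k-1},z_k)$.

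The main obstacle I expect is precisely this rerouting. The intermediate $z_j$ with $0<j<k$ need not lie off $C$: a~priori such a $z_j$ might be a chord-neighbor of $u$ on $C$, in which case the naïve insertion of $x$ into $C$ fails. The endgame therefore splits into the case in which all intermediate $z_j$ lie outside $V(C)$ (where the longer cycle is produced by a direct insertion using the chord $u^-u^+$), and the case in which some $z_j\in V(C)$ (where one first reroutes $C$ through the chord $uz_j$, obtaining a shorter witness to which the induction hypothesis applies). Repeated use of the claw-free condition at the successive cycle vertices involved should then yield the required longer cycle and close the argument.
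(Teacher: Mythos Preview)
The paper does not prove this theorem; it is quoted from \cite{oberly79} as background and immediately reformulated in ball language, with no argument supplied. So there is nothing in the paper to compare your attempt against.

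As for the attempt itself: the opening is fine, and in the sub-case where every $z_j$ with $j<k$ lies off $C$ you do obtain a longer cycle (replace the edge $uu^+$ by the path $u\,z_0\,z_1\dotsb z_{k-1}\,u^+$; the chord $u^-u^+$ is not actually needed here). The gap is the other sub-case. Neither of your two proposed reductions is well-defined. ``Reapply the claw-free argument to the triple $(u,z_{k-1},z_k)$'' does not name a claw: a claw has a centre and three pairwise non-adjacent leaves, but $z_{k-1}z_k\in E(G)$ and both $z_{k-1},z_k$ are adjacent to $u$, so no induced $K_{1,3}$ on these vertices is available. And ``reroute $C$ through the chord $uz_j$, obtaining a shorter witness'' is precisely the step you have not supplied: a single chord $uz_j$ does not by itself produce a cycle of length $\lvert V(C)\rvert$ in which $z_j$ becomes a cycle-neighbour of $u$. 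To make that work you would have to combine it with the chord $u^-u^+$ \emph{and} a further claw-free step at $z_j$ (centre $z_j$, leaves $u,z_j^-,z_j^+$) in order to relocate $u$ next to $z_j$ on a cycle of the same length, and even then you must still handle the branch in which $z_j^-z_j^+\in E(G)$ rather than $uz_j^{\pm}\in E(G)$. As written, the induction on $k$ is not well-founded.
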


Note that this result can be formulated in terms of balls as follows:
A finite connected graph~$G$ on at least three vertices is Hamiltonian if
for every vertex $u\in V(G)$ the ball
$G_1(u)$ satisfies the condition $\kappa\bigl(G_1(u)\bigr)\geq 2\geq \alpha\bigl(G_1(u)\bigr)$,
where $\kappa\bigl(G_1(u)\bigr)$ and $\alpha\bigl(G_1(u)\bigr)$ denotes
the vertex connectivity and independence number of the ball~$G_1(u)$, respectively.

Asratian and Khachatryan showed in
\cite{asratyan85,hasratian90,asratian06,asratian07}
that also some classical sufficient conditions for Hamiltonicity of graphs
that contain global parameters (e.g. \cref{oldthm:dirac,oldthm:ore}), can be reformulated
in such a way that
every global parameter in those
conditions
is replaced by a parameter of a ball with small radius.
Such results are called \emph{localization theorems} and present a possibility
to extend known classes of Hamiltonian graphs.
For example, the following generalization of Dirac's theorem and Ore's theorem was obtained in \cite{hasratian90}
(see also \cite[Thm.~10.1.3]{diestel}):

\begin{theorem}[Asratian and Khachatryan~\cite{hasratian90}]
\label{oldthm:L0}
\label{prop:localoreL0}
Let~$G$ be a connected finite graph on
at least three vertices such that
\[d(u)+d(v)\geq |N(u)\cup N(v)\cup N(x)|\]
for every path $uxv$ with $uv\notin E(G)$.
Then $G$ is Hamiltonian.
\end{theorem}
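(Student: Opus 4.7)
The plan is to argue by contradiction, following the classical longest-path template for Ore-type results but invoking the hypothesis locally at a common neighbor of the two path-endpoints. Suppose $G$ is connected with at least three vertices but not Hamiltonian, and let $P = v_0 v_1 \cdots v_k$ be a longest path in $G$. Maximality of $P$ forces $N(v_0) \cup N(v_k) \subseteq V(P) \setminus \{v_0, v_k\}$. If $v_0 v_k \in E(G)$, then $P$ closes into a cycle on $V(P)$, and since $G$ is not Hamiltonian the usual connectivity/rotation trick (some vertex outside $V(P)$ is adjacent to a $v_i$, and we reroute through the resulting cycle) produces a path longer than $P$, contradicting its maximality. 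So one may assume $v_0 v_k \notin E(G)$.

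Next I would run the standard Ore switch: define $A = \{i : v_i v_k \in E(G)\}$ and $B = \{i : v_{i+1} v_0 \in E(G)\}$, both subsets of $\{0, \ldots, k-1\}$ of sizes $d(v_k)$ and $d(v_0)$ respectively. Any index in $A \cap B$ yields a Hamilton cycle on $V(P)$, which by the same rerouting again contradicts maximality. Hence $A$ and $B$ are disjoint, and $d(v_0) + d(v_k) \leq k$.

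Now I would apply the hypothesis. If $v_0$ and $v_k$ have a common neighbor $x = v_j$ on $P$, then $v_0 x v_k$ is a path with $v_0 v_k \notin E(G)$, and the hypothesis yields $d(v_0) + d(v_k) \geq |N(v_0) \cup N(v_k) \cup N(x)|$. Since $v_0, v_k \in N(x)$ but neither lies in $N(v_0) \cup N(v_k)$, inclusion–exclusion already forces $|N(v_0) \cap N(v_k)| \geq 2$. A finer analysis — pinpointing a common neighbor $v_j$ whose path-neighbors $v_{j-1}, v_{j+1}$ can be made by the disjointness of $A$ and $B$ to include a further vertex outside $N(v_0) \cup N(v_k)$ — sharpens this to $|N(v_0) \cup N(v_k) \cup N(x)| > d(v_0) + d(v_k)$, the desired contradiction. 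If $v_0$ and $v_k$ happen to share no common neighbor on $P$, then iterated P\'osa rotations from $v_0$ produce a new longest path whose endpoints do share such a neighbor (since each new endpoint still has all its neighbors on $P$ and $A \cap B = \emptyset$ remains available), so the argument above applies.

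The main obstacle, and the step where localization genuinely differs from the classical Ore proof, is the selection of the common neighbor $x$: one has to verify that some common neighbor has a path-neighbor outside $N(v_0) \cup N(v_k)$, and this requires a combinatorial case analysis on how $A$ and $B$ sit inside $\{0, \ldots, k-1\}$. This bookkeeping, invisible in the global proof of Ore's theorem, is precisely where the local quantity $|N(u) \cup N(v) \cup N(x)|$ earns its keep and where the correct choice of pivot vertex is pinned down.
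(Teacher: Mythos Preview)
The paper does not prove this theorem: it is quoted from \cite{hasratian90} (and referenced again in Section~3 as the endpoint of the localization of Ore's theorem), so there is no in-paper proof to compare against. I can therefore only assess your argument on its own merits.

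Your sketch has a genuine gap at the decisive step. After establishing that $A\cap B=\emptyset$ and that $v_0,v_k$ share a common neighbour $x=v_j$, you assert that a ``finer analysis'' of the path-neighbours $v_{j-1},v_{j+1}$ yields $|N(v_0)\cup N(v_k)\cup N(x)|>d(v_0)+d(v_k)$. But that inequality is precisely the \emph{negation} of the hypothesis at the triple $(v_0,x,v_k)$; since you are assuming the hypothesis holds for every such path, you cannot hope to prove it fails at one. Equivalently, rewriting the hypothesis as $|N(v_0)\cap N(v_k)|\ge |N(x)\setminus(N(v_0)\cup N(v_k))|$, your plan is to exhibit an $x$ for which the right side strictly exceeds the left, and there is no reason this should ever happen in a graph satisfying the condition. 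The observation that $v_{j-1}\notin N(v_k)$ and $v_{j+1}\notin N(v_0)$ (forced by $A\cap B=\emptyset$) is correct but does not by itself push $|N(x)\setminus(N(v_0)\cup N(v_k))|$ above $|N(v_0)\cap N(v_k)|$; you would need at least three such ``new'' neighbours of $x$, and nothing in your setup supplies them.

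There is a second gap: when $N(v_0)\cap N(v_k)=\emptyset$ you appeal to P\'osa rotations to reach endpoints that do share a neighbour, but you give no reason why rotations should ever achieve this. In fact the hypothesis is vacuous for vertex pairs at distance $\ge 3$, so without a common neighbour you have no leverage at all, and rotations may well cycle through endpoints that remain far apart. The standard proofs (in \cite{hasratian90}, and as Theorem~10.1.3 in \cite{diestel}) avoid both difficulties by working with a longest \emph{cycle} $C$ rather than a longest path: a vertex $u\notin V(C)$ with a neighbour $x\in V(C)$ automatically forms the path $u\,x\,x^{+}$ with $ux^{+}\notin E(G)$, so the hypothesis applies immediately, and one then counts neighbours of $u$ and $x^{+}$ on $C$ to produce a longer cycle. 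That is where the contradiction actually lives, and your longest-path template does not reach it.
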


Note that in contrast with Dirac's and Ore's theorems,
\cref{oldthm:L0} applies to infinite classes of graphs~$G$ with large diameter
(\,$\geq \text{constant}\cdot|V(G)|$\,)
and small edge density
(\,$|E(G)|\leq \text{constant}\cdot|V(G)|$\,).

In 2004, Diestel and K\"uhn~\cite{diestel04a} suggested a new concept
for infinite locally finite graphs called \emph{Hamilton circles},
which are analogues of Hamilton cycles in finite graphs.
Diestel~\cite{diestel10} launched the ambitious project of
extending results on finite Hamilton cycles to Hamiltonian circles.
Georgakopoulos~\cite{georgakopoulos09} showed that
the square of a 2-connected, infinite, locally finite graph $G$ has a Hamilton circle,
extending Fleishner's theorem~\cite{fleischner74} for finite graphs.
Heuer \cite{heuer15} and Hamann et~al.~\cite{hamann16} showed that
every connected, locally connected, infinite, locally finite, claw-free graph has a Hamilton circle,
extending \cref{oldthm:oberly}.
Bruhn and Yu~\cite{bruhn08} obtained some other results concerning
an extension of Tutte's theorem on 4-connected planar graphs.

In each of these results the condition for an infinite graph is
the same as the condition in the corresponding result for finite graphs.
The situation is different for classical theorems on Hamilton cycles with conditions involving
the number of vertices of a graph.
In order to extend those theorems to infinite graphs we need some local analogues.
For example,
Diestel~\cite{diestel10} conjectured that every infinite locally finite graph
satisfying the condition of \cref{prop:localoreL0} (a local analogue of Ore's theorem)
has a Hamilton circle,
and Heuer \cite{heuer16} proved this conjecture for a class of graphs.

In the present paper we formulate a general method for finding
local analogues of classical theorems with global parameters,
and apply this method to formulate local analogues of well-known results of
Bauer et~al.~\cite{bauer89}, Bondy~\cite{bondy80}, Moon and Moser~\cite{moon63},
and H\"aggkvist and Nicoghossian~\cite{haggkvist81}.
Furthermore we extend two of our results to infinite locally finite graphs
and show that they guarantee the existence of Hamiltonian curves,
introduced by K\"undgen, Li and Thomassen~\cite{kundgen17}.
We believe that our localization method can be useful for
extending many other criteria for Hamiltonicity of finite graphs to infinite graphs.


\section{ Definitions and preliminary results}

Let~$G$ be a connected graph and $v$ a vertex in a ball
$G_r(u), r\ge1$. We call $v$ an \emph{interior vertex} of $G_r(u)$ if the ball $G_1(v)$ is a
subgraph of $G_r(u)$. Clearly, every vertex in $G_{r-1}(u)$ is interior for $G_r(u)$ and if
$G_r(u)=G$ then all vertices in~$G$ are interior vertices.

The \emph{connectivity} $\kappa(G)$ of a graph~$G$
is the smallest number of vertices whose removal
turns $G$ into a disconnected or trivial graph.

Let~$C$ be a cycle of a graph~$G$.
We denote by $G-C$ the graph induced by the set $V(G)\setminus V(C)$.
Furthermore, we denote by $\overrightarrow C$ the cycle~$C$ with a given orientation,
and by $\overleftarrow C$ the cycle~$C$ with the reverse orientation.
If $u,v\in V(C)$ then $u\overrightarrow Cv$ denote the consecutive vertices of~$C$
from $u$ to $v$ in the direction specified by $\overrightarrow C$.
The same vertices in reverse order are given by $v\overleftarrow Cu$.
We use $u^+$ to denote the successor of~$u$ on $\overrightarrow C$
and $u^-$ to denote its predecessor.
This notation is extended to sets:
if $S\subseteq V(C)$ then $S^+=\defset{x^+}{x\in S}$.
Analogous notation is used with respect to paths instead of cycles.

Let $G$ be an infinite locally finite graph.
A one-way infinite path in~$G$ is called a \emph{ray}.
We define an equivalence relation on the set of rays in $G$ by saying that
two rays are equivalent if no finite set of vertices separate them%
\footnote{This means that for every finite vertex set $S\subset V(G)$ both rays
have a tail (subray) in the same component of $G-S$.}
in~$G$.
The equivalence classes of this relation are called \emph{ends} of~$G$.
Every end can be viewed as a particular ``point of infinity''.
The \emph{Freudenthal compactification}~$|G|$ of~$G$ is a topological space obtained by taking~$G$,
seen as a 1-complex, and adding the ends of~$G$ as additional points.
For the precise definition of~$|G|$ see \cite{diestel}.
It should be pointed out that, inside~$|G|$,
every ray of~$G$ converges to the end of~$G$ it is contained in.
A \emph{closed curve} in~$|G|$ is the image of an continuous map
from the unit circle~$S^1\subset\mathbb{R}^2$ to~$|G|$.
\medskip

Let $\mathcal{P}$ be some property of a finite graph
(for example, being Hamiltonian or having a dominating cycle).
The following result gives a trivial way to reformulate sufficient conditions for
$\mathcal{P}$
in terms of balls.

\begin{proposition}
\label{thm:triviallocalization}
Let $K$ be a sufficient condition for finite graphs to have the property~$\mathcal{P}$,
and let $A(K,p)$ denote the set of all graphs with at most $p$~vertices
that satisfy the condition~$K$.
If $f(K,p)$ is a function such that
the diameter of any graph $G\in A(K,p)$ does not exceed $f(K,p)$,
then the following two statements are equivalent:
\begin{enumerate}
\item A graph $G$ on $p\geq 3$ vertices has property~$K$,
\label{thm:triviallocalization:(i)}
\item In a connected graph~$G$ on $p\geq 3$ vertices the ball $G_f(u)$ of radius $f=f(K,p)$
satisfies the condition~$K$ for each vertex~$u$.
\label{thm:triviallocalization:(ii)}
\end{enumerate}
\end{proposition}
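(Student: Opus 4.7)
The plan is to prove the two implications separately, with (i) $\Rightarrow$ (ii) essentially immediate from the definition of $f$ and (ii) $\Rightarrow$ (i) requiring a short argument by contradiction.

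For (i) $\Rightarrow$ (ii), the argument is a one-liner. If $G$ has $p$ vertices and satisfies condition $K$, then $G \in A(K,p)$, so the hypothesis on $f$ gives $\operatorname{diam}(G) \leq f = f(K,p)$. In particular every vertex of $G$ lies within distance $f$ of any fixed $u$, so $G_f(u) = G$, and condition $K$ for $G$ transfers to every ball trivially.

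For (ii) $\Rightarrow$ (i), the reduction is: it suffices to show that $G_f(u) = G$ for some vertex $u$, because then condition $K$ for $G_f(u)$ furnished by (ii) is condition $K$ for $G$. I would proceed by contradiction. Fix any $u$ and assume $G_f(u) \neq G$. Since $G$ is connected, there is a shortest $u$-$v$ path containing a vertex $w$ with $d_G(u,w) = f+1$. The naive attempt of deriving a contradiction inside $G_f(u)$ fails: $w$ is not a vertex of $G_f(u)$, so the diameter bound for $G_f(u)$ cannot see $w$. The key step is to re-center the ball at the neighbor $u_1$ of $u$ on the chosen shortest path. A standard triangle-inequality check gives $d_G(u_1,u) = 1$ and $d_G(u_1,w) = f$, so both $u$ and $w$ belong to $V(G_f(u_1))$. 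By (ii), $G_f(u_1)$ satisfies $K$ and contains at most $p$ vertices, so $G_f(u_1) \in A(K,p)$ and $\operatorname{diam}(G_f(u_1)) \leq f$. But distances in an induced subgraph can only grow, so $d_{G_f(u_1)}(u,w) \geq d_G(u,w) = f+1 > f$, a contradiction.

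The main obstacle I anticipate is identifying the correct centering of the ball. The natural first try is to work inside the ball around the same vertex $u$ that one used to locate the far vertex $v$, but that ball does not even contain $v$ or its predecessor $w$ at distance $> f$, so its diameter bound is useless. Once one notices that shifting the center by a single edge is exactly enough to pull $w$ back into the ball while keeping $u$ inside, the remainder of the argument is just the elementary observation that distances in an induced subgraph are no smaller than in the ambient graph.
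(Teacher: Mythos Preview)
Your proposal is correct and follows essentially the same approach as the paper: for (ii)$\Rightarrow$(i), both arguments take two vertices at distance $f+1$, re-center the ball at the vertex adjacent to one endpoint along a shortest path (your $u_1$, the paper's $v_1$), and derive a contradiction from the diameter bound on that ball. The only cosmetic difference is that the paper observes the entire shortest path lies in $G_f(v_1)$ to get $d_{G_f(v_1)}(v_0,v_{f+1})=f+1$, while you use the inequality $d_{G_f(u_1)}(u,w)\ge d_G(u,w)$; both are equally valid.
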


\begin{proof}
Clearly, \cref{thm:triviallocalization:(i)} implies \cref{thm:triviallocalization:(ii)}.
Suppose now that \cref{thm:triviallocalization:(ii)} holds for~$G$.
We will show that $G_f(u)=G$ for some vertex~$u$,
which implies that \cref{thm:triviallocalization:(i)} holds.
Suppose to the contrary that $G_f(u) \not=G$ for every $u\in V(G)$.
Then there are vertices at distance $f+1$ in $G$.
Let $P=v_0v_1\dotsp v_{f+1}$ be a path in $G$ with $d_G(v_0,v_{f+1})=f+1$,
and consider the ball $G_f(v_1)$.
Then $d_{G_f(v_1)}(v_0,v_{f+1})=f+1$,
since $P$ is a shortest path between $v_0$ and $v_{f+1}$ in $G$ and $P$ is in $G_f(v_1)$.
On the other hand,
$G_f(v_1)$ satisfies~$K$ and the number of vertices of $G_f(v_1)$ is less than~$p$.
Therefore $G_f(v_1)\in A(K,p)$ and thus $d_{G_f(v_1)}(v_0,v_{f+1})\leq f$; a contradiction.
Thus $G_f(u)=G$ for some~$u$, which implies that \cref{thm:triviallocalization:(i)} holds.
\end{proof}

Consider three examples using \cref{thm:triviallocalization}:

\begin{example}
\label{ex:triviallocaldirac}
The diameter of a Dirac graph can not exceed~$2$. Therefore Dirac's theorem has the following equivalent formulation:
\begin{quote}
A finite connected graph~$G$ on at least three vertices is Hamiltonian
if every ball of radius~$2$ in~$G$ is a Dirac graph,
that is, for every vertex $u\in V(G)$
the condition $d_{G_2(u)}(v)\geq {1\over 2}|M_2(u)|$ holds
for each vertex $v\in M_2(u)$.
\end{quote}
\end{example}

\begin{example}
\label{ex:triviallocalore}
The diameter of an Ore graph can not exceed~$2$. Therefore Ore's theorem has the following equivalent formulation:
\begin{quote}
A finite connected graph~$G$ on at least three vertices is Hamiltonian if every ball of radius~$2$ in~$G$ is an Ore graph.
\end{quote}
\end{example}

\begin{example}
The following result is well-known:
\end{example}

\begin{theorem}[Chvátal and Erdős~\cite{chvatal72}]
\label{oldthm:chvatal}
A finite graph~$G$ on at least three vertices is Hamiltonian if $\kappa (G)\geq \alpha (G)$.
\end{theorem}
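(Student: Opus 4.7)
The plan is a classical contradiction argument: assuming $G$ is non-Hamiltonian, I will manufacture an independent set of size $\kappa(G)+1$, which violates the hypothesis. First I would dispose of the trivial case $\alpha(G)=1$, since then $G$ is complete and hence Hamiltonian for $|V(G)|\geq 3$. Henceforth assume $\alpha(G)\geq 2$ and therefore $\kappa:=\kappa(G)\geq 2$; in particular $G$ is $2$-connected.

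Let $C$ be a longest cycle of $G$, oriented as $\overrightarrow{C}$, and suppose for contradiction that $V(C)\neq V(G)$. Choose $v\in V(G)\setminus V(C)$. Using Dirac's lower bound $|V(C)|\geq\min\bigl(|V(G)|,2\kappa\bigr)\geq\kappa$ on the circumference of a $\kappa$-connected graph, the fan form of Menger's theorem supplies $\kappa$ internally disjoint paths $P_1,\dots,P_\kappa$ from $v$ to $V(C)$, ending at distinct vertices $v_1,\dots,v_\kappa\in V(C)$, whose interiors avoid $V(C)$.

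The heart of the proof is to show that $S=\{v,v_1^+,\dots,v_\kappa^+\}$ is independent in $G$; since $|S|=\kappa+1$, this will yield $\alpha(G)\geq\kappa+1>\kappa(G)$, the desired contradiction. Non-adjacency inside $S$ is certified by producing, from each putative edge, a cycle longer than $C$. If $v\,v_i^+\in E(G)$, the cycle consisting of the edge $v v_i^+$, the $\overrightarrow{C}$-arc from $v_i^+$ around to $v_i$, and the reverse of $P_i$ back to $v$ visits every vertex of $C$ plus the interior of $P_i$, so has length $>|V(C)|$. If $v_i^+v_j^+\in E(G)$ with $i\neq j$, after relabeling so that $v_j$ follows $v_i^+$ along $\overrightarrow{C}$, the ``cross-swap'' cycle going from $v$ to $v_i$ along $P_i$, from $v_i$ back to $v_j^+$ along $\overleftarrow{C}$, across the chord $v_j^+v_i^+$, from $v_i^+$ forward to $v_j$ along $\overrightarrow{C}$, and back to $v$ along the reverse of $P_j$ likewise visits every vertex of $C$ together with $v$, again exceeding $|V(C)|$.

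The main technical obstacle is the second case: one must confirm that the cross-swap is a genuine simple cycle of length strictly greater than $|C|$. This hinges on the two oppositely oriented $C$-arcs partitioning $V(C)$ without overlap (so the relabeling to fix the cyclic position of $v_i$ and $v_j$ is essential), together with the internal disjointness of $P_i$ and $P_j$ and the fact that their interiors lie off $V(C)$.
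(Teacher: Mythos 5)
The paper never proves this statement: \cref{oldthm:chvatal} is quoted from Chvátal and Erdős, and the only argument supplied in the surrounding text is the diameter bound $\lfloor\sqrt{2p-3}\rfloor$ for graphs with $\kappa(G)\ge\alpha(G)$, which feeds \cref{thm:triviallocalization}. So your proposal can only be measured against the classical proof, and it is essentially that proof (the fan argument one finds, e.g., in Diestel): take a longest cycle $C$, a vertex $v$ off $C$, a fan of $\kappa$ paths from $v$ to $C$ with distinct endpoints $v_1,\dots,v_\kappa$, and show $\{v,v_1^+,\dots,v_\kappa^+\}$ is independent, contradicting $\alpha(G)\le\kappa(G)$. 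Your two cycle-lengthening constructions are correct: both produce simple cycles through all of $V(C)$ together with $v$ (and the path interiors), hence longer than $C$. Two minor remarks. First, you invoke Dirac's circumference bound to secure $|V(C)|\ge\kappa$ so that the fan lemma yields $\kappa$ legs; this is legitimate but heavier than needed, since if the fan had only $\min\bigl(\kappa,|V(C)|\bigr)=|V(C)|<\kappa$ legs, then every vertex of $C$ would be an attachment vertex, two attachment vertices would be consecutive on $C$, and rerouting that single edge of $C$ through $v$ would already lengthen $C$. Second, in the cross-swap case the degenerate situations $v_j=v_i^+$ or $v_i=v_j^+$ make the ``chord'' $v_i^+v_j^+$ an edge of $C$ and one of your two arcs a single vertex; the construction still yields a simple cycle through $V(C)\cup\{v\}$, so nothing is lost, but it would be cleaner to note this explicitly (or to first rule out consecutive attachment vertices by the same rerouting trick, as the classical write-ups do).
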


We will show that the diameter of a graph~$G$ on $p\geq 3$ vertices
satisfying the condition $\kappa(G)\geq \alpha(G)$ does not exceed $\lfloor\sqrt {2p-3}\rfloor$.
Let~$d$ be the diameter of~$G$ and let $x$ and $y$ be two vertices in~$G$ with $d(x,y)=d$.
We have $V(G)=\bigcup_{i=1}^dN_i(x)\cup \{x\}$. Clearly, $|N_i(x)|\geq \kappa(G)$ for $i=1,\dotsc,d-1$.
Therefore
\[|V(G)|-2\geq (d-1)\kappa(G)\geq (d-1)\alpha(G)\geq \tfrac{1}{2}(d-1)(d+1)=\tfrac{1}{2}(d^2-1),\]
because $\kappa(G)\geq \alpha(G)$ and $\alpha(G)\geq {d+1\over 2}$. Thus $p-2\geq {1\over 2}(d^2-1)$
which implies $2p-3\ge d^2$ and $d\leq \lfloor\sqrt {2p-3}\rfloor$.

Thus for any graph~$G$ satisfying \cref{oldthm:chvatal}, the diameter of~$G$ does not exceed $\lfloor\sqrt {2p-3}\rfloor$.%
\footnote{In fact, for every integer $n\geq 3$ there is a graph with $p=2n^2-1$ vertices that satisfies Chvátal--Erdős condition and has diameter $\lfloor\sqrt {2p-3}\rfloor$.
Consider, for example, the graph~$H$ with $V(H)=\bigcup_{i=1}^{2n}V_i$
where $V_1,\dotsc,V_{2n}$ are pairwise disjoint sets, $|V_1|=\dotsb=|V_{2n-1}|=n$,
$|V_{2n}|=n-1$ and two vertices in~$H$ are adjacent if and only if
they both belong to $V_i\cup V_{i+1}$ for some $i\in \{1,2,\dotsc,2n-1\}.$
Clearly, the diameter of~$H$ is $2n-1$ and $ \alpha(H)=\kappa(H)=n$.
Furthermore, $2n>\sqrt {4n^2-5}>2n-1$.
Therefore $\lfloor\sqrt {2p-3}\rfloor=\lfloor\sqrt {4n^2-5}\rfloor=2n-1$.}
Then, by \cref{thm:triviallocalization}, an equivalent formulation of \cref{oldthm:chvatal} in
terms of balls is the following:

\begin{proposition}
A finite connected graph~$G$ on $p\geq 3$ vertices is Hamiltonian if $\kappa(G_r(u))\geq \alpha(G_r(u))$
for every vertex $u\in V(G)$, where $r=\lfloor\sqrt {2p-3}\rfloor$.
\end{proposition}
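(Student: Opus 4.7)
The proof is a direct application of \cref{thm:triviallocalization} to the Chvátal–Erdős condition, using the diameter bound established immediately before the proposition. Let $\mathcal{P}$ denote Hamiltonicity and let $K$ denote the condition $\kappa(G)\geq \alpha(G)$. By \cref{oldthm:chvatal}, $K$ is a sufficient condition for $\mathcal{P}$ on graphs with at least three vertices, so $K$ fits the framework of \cref{thm:triviallocalization}.

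The preceding paragraphs establish that every graph in $A(K,p)$ has diameter at most $\lfloor\sqrt{2p-3}\rfloor$, so we may take $f(K,p)=\lfloor\sqrt{2p-3}\rfloor=r$. With this choice, \cref{thm:triviallocalization} asserts the equivalence of (i) ``$G$ satisfies $K$'' and (ii) ``every ball $G_r(u)$ satisfies $K$''. The hypothesis of the proposition is exactly statement (ii), so (i) holds, that is, $\kappa(G)\geq \alpha(G)$. Applying \cref{oldthm:chvatal} to $G$ then yields a Hamilton cycle.

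There is essentially no obstacle beyond verifying that the ingredients line up: the Chvátal–Erdős condition is inherited by induced subgraphs in the sense needed for \cref{thm:triviallocalization} (which only requires membership in $A(K,p)$), and the diameter bound $\lfloor\sqrt{2p-3}\rfloor$ has already been derived by the chain of inequalities $|V(G)|-2\geq (d-1)\kappa(G)\geq (d-1)\alpha(G)\geq \tfrac{1}{2}(d^2-1)$. Hence the proof is a one-line invocation of \cref{thm:triviallocalization} combined with \cref{oldthm:chvatal}.
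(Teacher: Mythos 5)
Your proposal is correct and follows exactly the paper's route: the paper derives the diameter bound $\lfloor\sqrt{2p-3}\rfloor$ for graphs satisfying the Chvátal--Erdős condition and then obtains the proposition as an immediate application of \cref{thm:triviallocalization} together with \cref{oldthm:chvatal}. Your remark that the bound applies to all graphs in $A(K,p)$ (not just those on exactly $p$ vertices, using monotonicity of $\lfloor\sqrt{2p-3}\rfloor$ in $p$) is the only detail the paper leaves implicit, and you handle it correctly.
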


The following problem arises naturally:
Is a finite connected graph~$G$ on at least three vertices Hamiltonian
if for some integer $t\geq 1$
the condition $\kappa(G_t(u))\geq \alpha(G_t(u))$ holds
for every $u\in V(G)$?
This problem is still open.
Some results on this problem for $t=1$ were obtained in \cite{chen13}.

Some criteria for Hamiltonicity can have different formulations in terms of balls.
Consider, for example, Dirac's theorem again. In \cref{ex:triviallocaldirac} we gave an equivalent
formulation of it in terms of balls. Consider now another formulation.

Let~$G$ be a Dirac graph. Then the diameter of~$G$ does not exceed~$2$ and, therefore,
$M_2(u)=V(G)$ for every vertex $u\in V(G)$.
It seems, therefore, that Dirac's theorem should be equivalent to the following proposition:
A finite connected graph~$G$ on at least three vertices is Hamiltonian
if $d(u)\geq {1\over 2}|M_2(u)|$ for every vertex $u\in V(G)$.
However the graph~$G$ in \cref{fig:toolocaldirac} satisfies this condition but is not Hamiltonian.
\begin{figure}
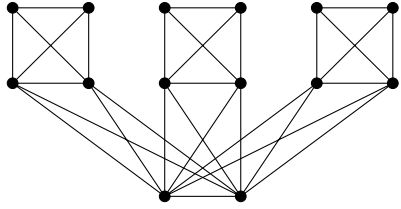

\begin{center}
\figtoolocaldirac
\end{center}
\caption{A non-Hamiltonian graph~$G$ such that $d(u)\ge\tfrac12\card{M_2(u)}$ for every vertex $u\in V(G)$.}
\label{fig:toolocaldirac}
\end{figure}
Surprisingly, Dirac's theorem is equivalent to the following proposition:
\begin{proposition}[\cite{asratian07}]
\label{prop:localdiracM4}
A finite connected graph~$G$ on at least three vertices is Hamiltonian if $d(u)\geq {1\over 2}|M_4(u)|$
for every vertex $u\in V(G)$.
\end{proposition}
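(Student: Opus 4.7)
My plan is to show that the hypothesis actually forces the diameter of $G$ to be at most~$2$, at which point $M_4(u)=V(G)$ for every vertex~$u$ and the condition reduces to Dirac's condition $d(u)\ge\tfrac12\card{V(G)}$. Then \cref{oldthm:dirac} finishes the proof. So the whole content of the proposition is really the diameter bound.

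To obtain the diameter bound, I would argue by contradiction. Suppose there are vertices $u$ and $v$ with $d_G(u,v)=3$ (if the diameter exceeds~$3$, take two vertices on a shortest path realizing distance exactly~$3$). The key geometric observation is simply that
\[
\{u\}\cup N(u)\cup\{v\}\cup N(v)\subseteq M_4(u),
\]
because every neighbor $w$ of $v$ satisfies $d_G(u,w)\le d_G(u,v)+1=4$, and symmetrically every neighbor of $u$ lies in $M_4(v)$. Moreover the four sets $\{u\},N(u),\{v\},N(v)$ are pairwise disjoint: the pairs $\{u,v\}$ and $\{u\}\cup N(u)$, $\{v\}\cup N(v)$ are disjoint because $d_G(u,v)=3$, and $N(u)\cap N(v)=\emptyset$ because any common neighbor would make $d_G(u,v)\le 2$. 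Consequently
\[
\card{M_4(u)}\ge d(u)+d(v)+2,
\]
and the same inequality with $u$ and $v$ swapped holds for $\card{M_4(v)}$.

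Applying the hypothesis at~$u$ gives $2d(u)\ge\card{M_4(u)}\ge d(u)+d(v)+2$, i.e.\ $d(u)\ge d(v)+2$. Applying it at~$v$ gives $d(v)\ge d(u)+2$. Adding the two yields $0\ge 4$, the desired contradiction. Hence every two vertices of~$G$ are at distance at most~$2$, so $M_4(u)=V(G)$ and $d(u)\ge\tfrac12\card{V(G)}$ for every $u\in V(G)$, and \cref{oldthm:dirac} supplies a Hamilton cycle.

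I do not foresee any serious obstacle: the only step that requires any thought is spotting the inclusion $N(u)\cup N(v)\subseteq M_4(u)$ together with the disjointness $N(u)\cap N(v)=\emptyset$, after which the two symmetric applications of the hypothesis immediately kill the assumption that the diameter exceeds~$2$. The reason the radius~$4$ (rather than~$3$ or smaller) is needed in the statement is precisely that one must be able to see all of $N(v)$ from~$u$ along the shortest $u,v$-path of length~$3$, and vice versa; the example in \cref{fig:toolocaldirac} shows that radius~$2$ is not enough to force this symmetry.
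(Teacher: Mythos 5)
Your proof is correct. The inclusion $\set{u}\cup N(u)\cup\set{v}\cup N(v)\subseteq M_4(u)$ for a pair at distance exactly~$3$, the pairwise disjointness of these four sets, and the two symmetric applications of the hypothesis do yield $d(u)\ge d(v)+2$ and $d(v)\ge d(u)+2$, an immediate contradiction; since any graph of diameter at least~$3$ contains a pair at distance exactly~$3$ (take an initial segment of a shortest path), the diameter is at most~$2$, so $M_4(u)=V(G)$ for every~$u$ and \cref{oldthm:dirac} applies. Note that the paper itself does not prove \cref{prop:localdiracM4} but cites \cite{asratian07} for it; the equivalence arguments the paper does carry out in the same spirit (the proof of \cref{prop:equivbondy}, and of \cref{prop:moon-moser-local-equivalent}) proceed somewhat differently, either by choosing a vertex maximizing $\card{M_4(v)}$ and showing the ball must be all of $V(G)$, or by bounding the diameter through degree-sum estimates along a shortest path. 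Your route is more elementary: it needs no extremal choice of center and no case analysis, only the single distance-$3$ configuration, and it even establishes the stronger conclusion that the diameter is at most~$2$ (rather than merely $M_4(u)=V(G)$), which makes the claimed equivalence with Dirac's condition completely transparent. Your closing remark about why radius~$4$ is exactly what is needed, with \cref{fig:toolocaldirac} showing radius~$2$ fails, is also apt.
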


In \cref{ex:triviallocalore} we gave a ball formulation of Ore's theorem. Another equivalent formulation of Ore's theorem was given in \cite{asratian07}:

\begin{proposition}[\cite{asratian07}]
\label{prop:localoreM3}
Let~$G$ be a finite connected graph on
at least three vertices such that
$d(u)+d(v)\geq |M_3(x)|$
for every vertex $x\in V(G)$
and for all pairs of non-adjacent interior
vertices $u,v$ of the ball $G_3(x)$.
Then $G$ is Hamiltonian.
\end{proposition}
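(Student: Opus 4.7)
The plan is to reduce this proposition directly to \cref{prop:localoreL0}. Given any path $uxv$ in $G$ with $uv \notin E(G)$, I will verify that $u$ and $v$ are nonadjacent interior vertices of the ball $G_3(x)$ and then extract from the hypothesis the inequality
\[d(u)+d(v) \geq |N(u)\cup N(v) \cup N(x)|\]
required by \cref{prop:localoreL0}, after which that theorem immediately yields a Hamilton cycle.

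For the interiority check, observe that $u, v \in N(x) \subseteq M_1(x)$, so every neighbor of $u$ (respectively $v$) lies in $M_2(x) \subseteq M_3(x)$. Hence $G_1(u), G_1(v) \subseteq G_3(x)$, meaning that $u$ and $v$ are interior vertices of $G_3(x)$. Since they are also nonadjacent by assumption, the hypothesis of the proposition applies and yields $d(u)+d(v) \geq |M_3(x)|$.

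For the containment step, notice that every vertex of $N(u) \cup N(v) \cup N(x)$ lies within distance~$2$ of~$x$, so $N(u)\cup N(v)\cup N(x) \subseteq M_2(x) \subseteq M_3(x)$, and therefore $|M_3(x)| \geq |N(u)\cup N(v)\cup N(x)|$. Combined with the previous inequality this establishes the condition of \cref{prop:localoreL0} for the path $uxv$, and since the path was arbitrary that proposition applies and finishes the proof.

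There is no substantial obstacle in this plan; it is essentially careful bookkeeping of which balls the relevant neighborhoods sit inside. The one curiosity is that the hypothesis uses $M_3(x)$ rather than $M_2(x)$, which makes the assumption \emph{weaker} and the proposition correspondingly stronger than a naive localization would yield; since $M_2(x) \subseteq M_3(x)$ this slack only helps the reduction rather than hindering it.
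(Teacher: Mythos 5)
Your reduction is correct: for any path $uxv$ with $uv\notin E(G)$ the vertices $u,v$ lie in $G_2(x)$ and are therefore interior vertices of $G_3(x)$, so the hypothesis gives $d(u)+d(v)\ge\card{M_3(x)}\ge\card{N(u)\cup N(v)\cup N(x)}$, and \cref{prop:localoreL0} finishes the job. This is, however, a genuinely different route from the one the paper relies on: \cref{prop:localoreM3} is cited from Asratian--Khachatryan (2007), where it is established as an \emph{equivalent} formulation of Ore's theorem (\cref{oldthm:ore}); the argument there is of the same flavour as the paper's proof of \cref{prop:equivbondy} --- one shows that the local condition forces a vertex maximizing $\card{M_3(\cdot)}$ to satisfy $M_3(v)=V(G)$, so the hypothesis collapses to Ore's condition, and conversely that Ore graphs, having diameter at most~2, satisfy the local condition. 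Your derivation instead leans on the stronger localization theorem \cref{prop:localoreL0}: given that theorem it is shorter and cleaner, but it only yields sufficiency (not the equivalence with Ore's theorem) and it presupposes a considerably harder result. One small correction to your closing remark: since $\card{M_3(x)}\ge\card{M_2(x)}$, requiring $d(u)+d(v)\ge\card{M_3(x)}$ is a \emph{stronger} assumption, hence a \emph{weaker} proposition, than the $M_2$-version in \cref{prop:localoreM2} --- indeed \cref{prop:localoreM3} is merely equivalent to Ore's theorem while \cref{prop:localoreM2} properly generalizes it; but, as you note, this direction of slack is exactly what makes your reduction go through.
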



\section{A general method of localization}

Now we will describe a method that can be used to find local analogues of many criteria for Hamiltonicity
and other properties of finite graphs.

Let $K$ be a sufficient condition for a finite graph to have some property~$\mathcal{P}$
(for example, being Hamiltonian or having a dominating cycle),
that contains a global parameter of the graph
(for example, the number of vertices).

\begin{step}
Find an equivalent formulation (or a variation) $K'$ of the criterion~$K$ and a function $r=r(K',p)$ such that
a connected graph~$G$ on $p\geq 3$ vertices satisfies the criterion~$K'$
if and only if
every ball of radius $r(K',p)$ in $G$ satisfies~$K'$.
\end{step}

\begin{step}
Replace the balls of radius $r(K',p)$ in the condition $K'$ with balls of radius $r(K',p)-1$ or less if this new condition
still guarantees that finite graphs have the property~$\mathcal{P}$.
\end{step}

\begin{step}
Try to relax the condition using the structure of balls.
\end{step}

In other words, we try to replace a global parameter of the graph by parameters of balls of smaller radii.

In \cref{sec:bondy,sec:kappa,sec:MM} we will use this method to find
local analogues of four well-known results in Hamiltonian graph theory.
But first we would like to demonstrate how it works
by applying it to Dirac's and Ore's criteria and rediscovering some old theorems.
Consider, for example, the equivalent formulation of Dirac's theorem
using balls of radius~4,
given in \cref{prop:localdiracM4}.
By decreasing the radius of the balls to 3 we obtain the following result:
\begin{proposition}[\cite{hasratian90}]
\label{prop:localdiracM3}
A finite connected graph~$G$ on at least three vertices is Hamiltonian if $d(u)\geq {1\over 2}|M_3(u)|$
for every vertex $u\in V(G)$.
\end{proposition}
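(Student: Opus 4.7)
The plan is to derive this from the Asratian--Khachatryan local Ore-type theorem (\cref{prop:localoreL0}) rather than attacking Hamiltonicity directly. So the goal is to show that if every vertex $u$ satisfies $d(u)\geq \tfrac{1}{2}\card{M_3(u)}$, then for every path $uxv$ with $uv\notin E(G)$ one has $d(u)+d(v)\geq \card{N(u)\cup N(v)\cup N(x)}$.

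First I would fix such a path $uxv$ and observe that since $uv\notin E(G)$ but they share the common neighbor $x$, we have $d(u,v)=2$. The key geometric step is to verify the inclusion
\[ N(u)\cup N(v)\cup N(x)\;\subseteq\; M_3(u)\cap M_3(v). \]
This is a short case check: a vertex $w\in N(u)$ satisfies $d(u,w)=1\leq 3$ and $d(v,w)\leq d(v,u)+d(u,w)=3$; the case $w\in N(v)$ is symmetric; and for $w\in N(x)$ one has $d(u,w)\leq d(u,x)+d(x,w)=2$ and likewise $d(v,w)\leq 2$.

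From that containment I would then chain
\[ \card{N(u)\cup N(v)\cup N(x)} \leq \card{M_3(u)\cap M_3(v)} \leq \min\bigl(\card{M_3(u)},\card{M_3(v)}\bigr) \leq \tfrac{1}{2}\bigl(\card{M_3(u)}+\card{M_3(v)}\bigr), \]
and finally apply the hypothesis $d(u)\geq\tfrac{1}{2}\card{M_3(u)}$ and $d(v)\geq\tfrac{1}{2}\card{M_3(v)}$ to the right-hand side. This yields exactly the inequality required by \cref{prop:localoreL0}, so that theorem delivers a Hamilton cycle.

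I do not expect a serious obstacle here: the argument is essentially a reduction, and the only place where something could go wrong is the inclusion $N(u)\cup N(v)\cup N(x)\subseteq M_3(u)\cap M_3(v)$, which relies on the slack of one unit in the radius (radius $3$ rather than $2$) to absorb the triangle-inequality detour through $x$. This slack is precisely why one cannot push the radius below $3$ with this method, which is consistent with the fact that $\card{M_2(u)}$ in the hypothesis is already known to be insufficient (\cref{fig:toolocaldirac}).
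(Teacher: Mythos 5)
Your reduction is correct. The paper itself does not prove \cref{prop:localdiracM3} (it is quoted from \cite{hasratian90}), so the only question is whether your derivation from \cref{oldthm:L0} is sound, and it is: for a path $uxv$ with $uv\notin E(G)$ one has $d(u,v)=2$, the inclusion $N(u)\cup N(v)\cup N(x)\subseteq M_3(u)\cap M_3(v)$ follows from the triangle inequality exactly as you state, and bounding the intersection by the minimum, hence by the average $\tfrac12\bigl(\card{M_3(u)}+\card{M_3(v)}\bigr)\le d(u)+d(v)$, gives precisely the hypothesis of \cref{oldthm:L0}; the remaining hypotheses (finite, connected, at least three vertices) are assumed. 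There is no circularity, since \cref{oldthm:L0} is proved independently in \cite{hasratian90} and does not rely on \cref{prop:localdiracM3}. Your closing remark is also apt: the one unit of slack in the radius is exactly what absorbs the detour through $x$, consistent with the counterexample in \cref{fig:toolocaldirac} showing that $\card{M_2(u)}$ would not suffice.
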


This is a generalization of Dirac's theorem.
There is an infinite class of graphs of diameter~5 that satisfy the
condition of \cref{prop:localdiracM3} but does not satisfy Dirac's condition.
Consider, for example, the graph $G_n$ on $10n+2$ vertices, $n\geq 2$,
which is defined as follows:
its vertex set is $\cup_{i=0}^5V_i$, where $V_0, V_1,\dots,V_5$ are
pairwise disjoint sets of cardinality $|V_0|=|V_5|=n$, $|V_1|=|V_4|=3n, |V_2|=|V_3|=n+1$
and two vertices of $G_n$ are adjacent if and only if they both belong to $V_i\cup V_{i+1}$
for some $i\in \{0,1,2,3,4\}$.
It is not difficult to see that $G_n$ satisfies the condition of \cref{prop:localdiracM3}.
The graph $G_2$ is given in \cref{fig:localdirac}.
\begin{figure}
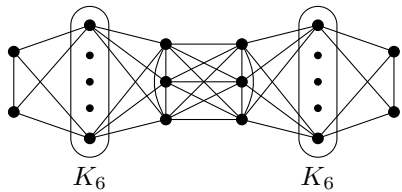

\begin{center}
\figlocaldirac
\end{center}
\caption{A graph satisfying the condition of \cref{prop:localdiracM3} but not Dirac's condition.}
\label{fig:localdirac}
\end{figure}

Thus in some cases the above method gives a larger class of Hamiltonian graphs than the original criterion gives.
In other cases the method does not generalize the original criterion,
but gives a local analogue which is a new sufficient condition for Hamiltonicity.
Consider, for example, the equivalent formulations of Ore's theorem given in \cref{ex:triviallocalore} and \cref{prop:localoreM3}.
By decreasing the radius of balls in the former from 2 to 1 we obtain the following result:

\begin{proposition}[\cite{hasratian90}]
\label{prop:localore1-balls}
A finite connected graph~$G$ on at least three vertices is Hamiltonian if every ball of radius~1 in~$G$ is a Ore graph.
\end{proposition}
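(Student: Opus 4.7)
The plan is to reduce \cref{prop:localore1-balls} to the Asratian--Khachatryan criterion in \cref{prop:localoreL0}: I will verify that the $1$-ball Ore hypothesis forces
\[d(u)+d(v)\geq |N(u)\cup N(v)\cup N(x)|\]
for every path $uxv$ in $G$ with $uv\notin E(G)$, after which Hamiltonicity follows immediately from that earlier result.

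Fix such a path $uxv$. Then $u,v$ are nonadjacent vertices of the ball $G_1(x)$, and since $x$ is adjacent to both of them, $d_{G_1(x)}(u)=1+|N(u)\cap N(x)|$ and $d_{G_1(x)}(v)=1+|N(v)\cap N(x)|$. Because $|V(G_1(x))|=|M_1(x)|=d(x)+1$, the Ore inequality applied inside $G_1(x)$ reduces to
\[|N(u)\cap N(x)|+|N(v)\cap N(x)|\geq d(x)-1.\]
I would then plug this into the inclusion--exclusion identity
\[|N(u)\cup N(v)\cup N(x)| = d(u)+d(v)+d(x)-|N(u)\cap N(v)|-|N(u)\cap N(x)|-|N(v)\cap N(x)|+|N(u)\cap N(v)\cap N(x)|,\]
which bounds the left-hand side above by $d(u)+d(v)+1-\bigl(|N(u)\cap N(v)|-|N(u)\cap N(v)\cap N(x)|\bigr)$.

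The small but crucial observation that closes the argument is that the vertex $x$ itself belongs to $N(u)\cap N(v)$---this is precisely the statement that $uxv$ is a path---while $x\notin N(x)$ because $G$ has no loops. Hence $|N(u)\cap N(v)|\geq |N(u)\cap N(v)\cap N(x)|+1$, which absorbs the remaining $+1$ above and yields the desired estimate $|N(u)\cup N(v)\cup N(x)|\leq d(u)+d(v)$.

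The main obstacle I anticipate is precisely this one-unit discrepancy: a direct inclusion--exclusion computation falls exactly one short of the Asratian--Khachatryan bound, and only the observation that $x$ contributes to $N(u)\cap N(v)$ but not to $N(x)$ saves the estimate. Everything else is routine bookkeeping on top of \cref{prop:localoreL0}.
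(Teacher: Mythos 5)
Your reduction is correct. Note that the paper itself gives no proof of \cref{prop:localore1-balls}\,---\,it is quoted from \cite{hasratian90}\,---\,so there is nothing internal to compare against; but your argument is a clean and valid derivation from \cref{prop:localoreL0} as stated in the paper. The key steps all check out: for a path $uxv$ with $uv\notin E(G)$ the vertices $u,v$ are nonadjacent in $G_1(x)$, the Ore inequality in $G_1(x)$ is exactly $\bigl(1+|N(u)\cap N(x)|\bigr)+\bigl(1+|N(v)\cap N(x)|\bigr)\ge d(x)+1$, and feeding this into inclusion--exclusion leaves the surplus $1-\bigl(|N(u)\cap N(v)|-|N(u)\cap N(v)\cap N(x)|\bigr)\le 0$, since $x\in N(u)\cap N(v)\setminus N(x)$; hence $|N(u)\cup N(v)\cup N(x)|\le d(u)+d(v)$ and \cref{prop:localoreL0} applies (the hypotheses ``finite, connected, at least three vertices'' carry over verbatim). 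What your route buys is a conceptual point consistent with the paper's narrative: the radius-$1$ Ore condition is a formal corollary of the relaxed neighborhood-union criterion, whereas the original source \cite{hasratian90} establishes such statements by direct longest-cycle/extension arguments. The only cosmetic caveat is that you use the Ore inequality for $G_1(x)$ only for the specific nonadjacent pair $u,v$, which is all the hypothesis ``every ball of radius $1$ is an Ore graph'' guarantees, so no hidden strengthening is used.
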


Note that \cref{prop:localore1-balls} does not generalize Ore's theorem.
However, it describes a new class of Hamiltonian graphs.
In contrast with this, by decreasing the radius of the balls in \cref{prop:localoreM3}
from 3 to 2 we obtain the following result,
which is a generalization of Ore's theorem:
A finite connected graph $G$ on
at least three vertices is Hamiltonian if
$d(u)+d(v)\geq |M_2(x)|$
for every vertex $x\in V(G)$
and for all pairs of non-adjacent interior
vertices $u,v$ of the ball $G_2(x)$.

The conditions in this result can be relaxed slightly, giving another generalization of Ore's theorem:
\begin{proposition}[\cite{asratyan85}]
\label{prop:localoreM2}
Let $G$ be a finite connected graph on
at least three vertices such that
$d(u)+d(v)\geq |M_2(x)|$
for every path $uxv$ with $uv\notin E(G)$.
Then $G$ is Hamiltonian.
\end{proposition}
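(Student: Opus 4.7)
The plan is to derive Proposition 3.6 as a direct consequence of \cref{oldthm:L0}, which is stated in the introduction and available for use here. The crux is that the hypothesis of Proposition 3.6 is simply a strengthening of the hypothesis of \cref{oldthm:L0}: once the appropriate containment of neighborhoods inside $M_2(x)$ is noted, there is nothing more to prove.

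Concretely, I would first verify that for every path $uxv$ in $G$ one has
\[N(u)\cup N(v)\cup N(x)\subseteq M_2(x).\]
Indeed, $N(x)\subseteq M_2(x)$ by definition, while for any $y\in N(u)$ (and analogously for $y\in N(v)$) the triangle inequality gives $d(x,y)\le d(x,u)+d(u,y)=2$, since $u\in N(x)$. Taking cardinalities yields $|N(u)\cup N(v)\cup N(x)|\le |M_2(x)|$.

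Now let $uxv$ be an arbitrary path in $G$ with $uv\notin E(G)$. By the hypothesis of Proposition 3.6, $d(u)+d(v)\ge |M_2(x)|$, and combining this with the inclusion above gives
\[d(u)+d(v)\ge |N(u)\cup N(v)\cup N(x)|.\]
Thus $G$ satisfies the hypothesis of \cref{oldthm:L0}, and is therefore Hamiltonian.

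There is essentially no obstacle in this approach; the whole argument reduces to a set-inclusion observation. The only real difficulty would arise if one insisted on a self-contained proof of Proposition 3.6 (as in the original reference \cite{asratyan85}, which historically predates \cref{oldthm:L0}): in that case one would need a full extremal-cycle argument in the spirit of Ore's theorem, selecting a longest cycle $C$ in a hypothetical counterexample together with a longest path avoiding $C$, and exploiting the ball-based degree sum hypothesis at a suitable attachment vertex to force enough distinct successors on $C$ to yield a contradiction. But with \cref{oldthm:L0} on hand, this work is entirely subsumed.
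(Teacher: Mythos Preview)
Your proposal is correct and matches the paper's treatment: the paper does not give a standalone proof of \cref{prop:localoreM2} but cites it from \cite{asratyan85} and immediately remarks that \cref{oldthm:L0} is the stronger result obtained by replacing $\card{M_2(x)}$ with $\card{N(u)\cup N(v)\cup N(x)}$. Your set-inclusion argument simply makes this implication explicit, which is exactly what the paper's narrative intends.
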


Finally, using the structure of balls, we can relax the condition in
\cref{prop:localoreM2}
and obtain a stronger result, \cref{oldthm:L0},
where $\card{M_2(x)}$ is replaced by $\card{N(u)\cup N(v)\cup N(x)}$.

Note that for regular graphs \cref{prop:localoreM2} can be reformulated as follows:
\begin{proposition}[\cite{asratian06}]
\label{prop:localoreM2oldregular}
A finite, connected, $k$-regular graph $G$, $k\ge2$,
is Hamiltonian if $2k\ge\card{M_2(x)}$ for every vertex $x\in V(G)$.
\end{proposition}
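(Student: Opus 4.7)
The plan is to derive this proposition as an immediate consequence of \cref{prop:localoreM2}, since the $k$-regularity collapses the pairwise degree sum $d(u)+d(v)$ to the uniform value $2k$, allowing the hypothesis of \cref{prop:localoreM2} to be read off directly from the assumption $2k\ge\card{M_2(x)}$.

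First I would check the easy side-conditions needed to invoke \cref{prop:localoreM2}: the graph $G$ is connected by assumption and, since $G$ is simple and $k$-regular with $k\ge2$, every vertex has at least two distinct neighbors, so $\card{V(G)}\ge k+1\ge 3$. Next, let $uxv$ be any path in $G$ with $uv\notin E(G)$. By $k$-regularity, $d(u)+d(v)=2k$, and the assumed inequality $2k\ge\card{M_2(x)}$ immediately gives $d(u)+d(v)\ge\card{M_2(x)}$. Thus every hypothesis of \cref{prop:localoreM2} is satisfied, and so $G$ is Hamiltonian.

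There is no real obstacle here; the only point to be careful about is the direction of the reformulation. The assumption of \cref{prop:localoreM2oldregular} is stated universally over all vertices $x$, while the assumption of \cref{prop:localoreM2} is stated only over vertices $x$ that lie in the middle of some path $uxv$ with $u,v$ nonadjacent. The universal quantifier in the former is therefore at least as strong as what \cref{prop:localoreM2} requires, so the implication goes through regardless of whether any given vertex $x$ admits such a pair of nonadjacent neighbors (if it does not, the hypothesis of \cref{prop:localoreM2} is vacuously satisfied at $x$).
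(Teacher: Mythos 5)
Your proposal is correct and follows exactly the route the paper intends: the paper presents \cref{prop:localoreM2oldregular} as a reformulation of \cref{prop:localoreM2} for regular graphs, which is precisely your deduction that $k$-regularity gives $d(u)+d(v)=2k\ge\card{M_2(x)}$ for every path $uxv$ with $uv\notin E(G)$. Your checks of the side conditions (connectivity and $\card{V(G)}\ge k+1\ge3$) and the remark about the quantifier over $x$ are also fine.
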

The condition $2k\geq |M_2(x)|$ in
\cref{prop:localoreM2oldregular}
can be rewritten as
\[2k\geq |M_2(x)|=1+k+|N_2(x)|,\]
which is equivalent to $|N_2(x)|<k$.
Therefore \cref{prop:localoreM2oldregular} can be reformulated as follows:

\begin{proposition}
\label{prop:localoreM2regular}
A finite, connected, $k$-regular graph $G$, $k\ge2$,
is Hamiltonian if the number of vertices at distance~$2$ from
any vertex in $G$ is less than~$k$.
\end{proposition}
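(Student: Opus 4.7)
The plan is to observe that this is just an algebraic reformulation of \cref{prop:localoreM2oldregular}, so the real task is to verify the equivalence of the two conditions and then invoke that result as a black box.

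First I would fix an arbitrary vertex $x\in V(G)$ and decompose the ball $M_2(x)$ according to distance from $x$: the set $M_2(x)$ is the disjoint union of $\{x\}$, $N_1(x)=N(x)$, and $N_2(x)$. Since $G$ is $k$-regular we have $|N(x)|=k$, so
\[\card{M_2(x)}=1+k+\card{N_2(x)}.\]
Substituting this identity into the inequality $2k\ge\card{M_2(x)}$ of \cref{prop:localoreM2oldregular}, I rewrite it as $k\ge1+\card{N_2(x)}$, that is, $\card{N_2(x)}\le k-1$. Because $\card{N_2(x)}$ is a non-negative integer, this is equivalent to $\card{N_2(x)}<k$, which is precisely the hypothesis of the present proposition.

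Thus the hypothesis ``the number of vertices at distance~$2$ from $x$ in~$G$ is less than~$k$'' holds for every $x\in V(G)$ if and only if $2k\ge\card{M_2(x)}$ holds for every $x\in V(G)$. Applying \cref{prop:localoreM2oldregular} then yields that $G$ is Hamiltonian.

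There is no real obstacle here: the whole argument is the arithmetic identity $\card{M_2(x)}=1+k+\card{N_2(x)}$ in a $k$-regular graph, combined with the fact that strict and non-strict integer inequalities differ by one unit. The substantive work sits inside \cref{prop:localoreM2oldregular} (equivalently, \cref{prop:localoreM2}), which we are allowed to assume.
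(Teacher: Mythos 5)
Your proposal is correct and matches the paper's own reasoning exactly: the paper likewise rewrites $2k\ge\card{M_2(x)}$ via the identity $\card{M_2(x)}=1+k+\card{N_2(x)}$ (using $k$-regularity) and notes this is equivalent to $\card{N_2(x)}<k$, then appeals to \cref{prop:localoreM2oldregular}. Nothing is missing.
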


\begin{remark}
\label{remark:local-but-not-global-ore-graphs}
Note that for any integer $n\geq 3$
there is a graph~$G$ of diameter~$n$ that satisfies the conditions of
\cref{prop:localore1-balls,prop:localoreM2,prop:localoreL0,prop:localoreM2oldregular,prop:localoreM2regular},
but is not an Ore-graph.
Consider, for example, the graph $G(p,2n)$, $n\geq 3$, which is defined as follows:
its vertex set is $V_1\cup\dotsb\cup V_{2n}$,
where $V_1,\dotsc,V_{2n}$ are pairwise disjoint sets of cardinality $p\geq 2$,
and two vertices of $G(p,2n)$ are adjacent if and only if
they both belong to $V_1\cup V_{2n}$ or to $V_i\cup V_{i+1}$
for some $i\in\{1,2,\dotsc,2n-1\}$.
Clearly, $G(p,2n)$ is a $(3p-1)$-regular graph of diameter~$n$.
It is not difficult to verify that the graph $G(p,2n)$ satisfies the conditions of \cref{prop:localore1-balls,prop:localoreM2,prop:localoreL0,prop:localoreM2oldregular,prop:localoreM2regular}.
\end{remark}


\section{Localization of Bondy's theorem}
\label{sec:bondy}

A subset of vertices in a graph $G$ is called \emph{independent} if no two of its elements are adjacent.
A cycle~$C$ in a graph~$G$ is called \emph{dominating} if $V(G)\setminus V(C)$ is an independent set.
Bondy obtained the following result:
\begin{theorem}[Bondy~\cite{bondy80}]
\label{oldthm:bondy}
Let $G$ be a 2-connected finite graph such that
$d(x)+d(y)+d(z)\ge \card{V(G)}+2$
for every triple of independent vertices $x,y,z$ of~$G$.
Then every longest cycle of $G$ is dominating.
\end{theorem}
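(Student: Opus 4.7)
The plan is to proceed by contradiction. Suppose $C$ is a longest cycle of $G$ that is not dominating, so $G-C$ contains an edge; let $H$ be the component of $G-C$ containing such an edge, and note $\card{V(H)}\ge 2$. Since $G$ is $2$-connected, there are at least two vertices of $C$ having neighbors in $V(H)$ (otherwise a single cut vertex would separate $H$ from the rest). I would then choose a path $P=p_0p_1\dotsp p_m$ in $H$ with $m\ge 1$ that is of maximum length among those whose endpoints both have neighbors on $C$. This path plays the role of the ``obstruction'' that we will use to create a longer cycle.

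Fix an orientation $\dir C$ of $C$ and set $X=N_C(p_0)$ and $Y=N_C(p_m)$. Using the maximality of $C$ together with the maximality of $P$, the plan is to prove via cycle-rotation surgery the following standard facts:
(a) no vertex of $X^+\cup Y^+$ has a neighbor in $V(H)$ (otherwise we would extend $P$ or reroute through $H$ to lengthen $C$);
(b) no edge of $G$ joins two vertices of $X^+\cup Y^+$, and no edge joins $p_0$ to any vertex of $X^+$ or $Y^+$ (each such edge, combined with the detour $p_0P p_m$, would produce a cycle strictly longer than $C$);
(c) $X^+$ and $Y^+$ are disjoint from $\set{p_0,p_m}$.
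The substantive step here is the cycle-surgery argument behind~(b), which uses the two edges $p_0x$ and $p_m y$ and the path $P$ to re-splice $C$.

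Next I would select $x\in X$ and $y\in Y$ with $x\ne y$ (possible because, applying $2$-connectivity to the set $V(H)\cup\set{x}$, $H$ must have an attachment on $C$ distinct from any single vertex; if it happens that $x^+=y^+$ one can choose a different pair or swap the roles of $p_0$ and $p_m$ by symmetry). By (a)--(c) the triple $T=\set{p_0,x^+,y^+}$ is independent, so Bondy's hypothesis yields
\[d(p_0)+d(x^+)+d(y^+)\ge\card{V(G)}+2.\]

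Finally I would derive a contradiction by a direct degree count: every neighbor of $p_0$ lies in $(V(H)\setminus\set{p_0})\cup X$; every neighbor of $x^+$ and of $y^+$ lies in $V(C)$ (by~(a)) and avoids $X^+\cup Y^+$ (by~(b)). Tallying these restrictions with appropriate care for the pigeonhole between $V(H)$ and $V(C)$ should give $d(p_0)+d(x^+)+d(y^+)\le\card{V(G)}+1$, contradicting the inequality above. The main obstacle is the bookkeeping in this degree count: one has to ensure that the ``forbidden'' sets $X^+$ and $Y^+$ really are disjoint of size $\card X$ and $\card Y$ (so successors are not collapsing), and to handle cleanly the degenerate case $x^+=y^+$ and the case where $X$ or $Y$ is small, since the slack in the inequality is only $2$.
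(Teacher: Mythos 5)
Your opening is sound and matches the standard approach: a longest cycle $C$, a component $H$ of $G-C$ with $\card{V(H)}\ge2$, at least two attachment vertices by $2$-connectivity, and rotation arguments giving your facts (a)--(c); all of that is provable as you indicate. (The paper itself does not prove Bondy's theorem but quotes it; the closest in-paper analogue of what you need is the proof of \cref{lem:localbondylemma}.) However, the proposal has a genuine gap precisely where you defer to ``bookkeeping''. First, the statement that every neighbour of $x^+$ and of $y^+$ lies in $V(C)$ does not follow from (a): these vertices may have neighbours in components of $G-C$ other than $H$. What is true, and what the count actually needs, is that $x^+$ and $y^+$ have no \emph{common} neighbour outside $C$ (one more rotation through the path $P$ and the edges $p_0x$, $p_my$). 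Second, and decisively, the restrictions you list cannot yield $d(p_0)+d(x^+)+d(y^+)\le\card{V(G)}+1$: forbidding $X^+\cup Y^+$ only gives $d(x^+)\le\card{V(C)}-\card{X^+\cup Y^+}$ and likewise for $y^+$, so the sum is still of order $2\card{V(C)}$, which can exceed $\card{V(G)}$ by an arbitrary amount; no pigeonhole between $V(H)$ and $V(C)$ repairs this. The missing idea is the control of the \emph{common} neighbours of $x^+$ and $y^+$ on $C$: split $C$ into the two arcs determined by $x$ and $y$, show that a common neighbour $w$ on an arc forces $w^-$ (respectively $w^+$, and on one arc also $w^{++}$) to be a non-neighbour, and build an injection from common neighbours to non-neighbours, while choosing $x$ to be the neighbour of $p_0$ after which $p_0$ has no further neighbours on the arc up to $y$, so that the successors of $N_C(p_0)$, already counted as non-neighbours, are not reused by the injection. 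This is exactly the content of Claims 1--3 in the proof of \cref{lem:localbondylemma} (with $M_3(\cdot)$ replaced by $V(G)$); without such an argument the degree count does not close.

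A smaller but real defect is the existence of your independent triple. With $P$ chosen of maximum length among paths in $H$ whose endpoints have neighbours on $C$, it can happen that $N_C(p_0)=N_C(p_m)=\set{w}$ (for instance $H=abc$ with $a,c$ attached only to $w$ and $b$ attached elsewhere), and then no choice $x\ne y$ from $X\cup Y$ exists; ``swapping the roles of $p_0$ and $p_m$'' does not help. The standard remedy is not to tie both attachments to the endpoints of one maximal path: take any two distinct attachment vertices $x,y$ of the whole component $H$ (these exist by $2$-connectivity), vertices $u,v\in V(H)$ with $ux,vy\in E(G)$, and a $u$--$v$ path inside $H$ (possibly trivial); the triple $\set{u,x^+,y^+}$ is then independent by the same rotations, and the degree count is carried out for it.
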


We will find a local analogue of this theorem using our method of localization.
First we will show that Bondy's theorem is equivalent to the following:

\begin{proposition}
\label{prop:equivbondy}
Let $G$ be a connected finite graph on at least three vertices such that
every ball of radius~4 in~$G$ is 2-connected and
$d(x)+d(y)+d(z)\geq |M_4(v)|+2$
for every vertex $v\in V(G)$
and for all triples of
independent interior vertices $x, y, z$ of $G_4(v)$.
Then every longest cycle of $G$ is dominating.
\end{proposition}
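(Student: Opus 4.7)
The plan is to reduce the proposition to Bondy's theorem (\cref{oldthm:bondy}) by showing that the local hypotheses force $G$ itself to be 2-connected and to satisfy the global degree sum condition $d(x)+d(y)+d(z)\geq|V(G)|+2$ for every independent triple $\{x,y,z\}$ of $G$.

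The 2-connectedness of $G$ is immediate: if $v$ were a cut vertex of $G$, then both sides of $G-v$ would contain a neighbor of $v$ (hence meet $M_4(v)$), and any path between those two sides in $G_4(v)-v$ would also be a path in $G-v$, which is impossible. Hence $v$ would be a cut vertex of $G_4(v)$ as well, contradicting the hypothesis that every ball of radius~4 is 2-connected.

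To obtain the global degree sum condition, I aim to show that the diameter of~$G$ does not exceed~4; then $G_4(v)=G$ for every vertex~$v$, every vertex becomes an interior vertex of $G_4(v)$, and $|M_4(v)|=|V(G)|$, so the local degree sum hypothesis rewrites as $d(x)+d(y)+d(z)\geq|V(G)|+2$ for every independent triple of~$G$. Suppose for contradiction that the diameter is at least~5 and take a shortest path $v_0v_1\dotsp v_k$ with $k\geq 5$. In the case $k\geq 6$, the three vertices $v_0,v_3,v_6$ are pairwise at distance~3, hence form an independent set, and each of them is interior to the ball $G_4(v_3)$. Any common neighbor of two of these vertices would reduce a pairwise distance to~2, so the three neighborhoods $N(v_0),N(v_3),N(v_6)$ are pairwise disjoint; they are also disjoint from $\{v_0,v_3,v_6\}$ and all contained in $M_4(v_3)$. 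Therefore
\[
d(v_0)+d(v_3)+d(v_6)\leq|M_4(v_3)|-3,
\]
contradicting the local degree sum hypothesis applied to this triple.

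The case $k=5$ is the main obstacle. A direct application of the same counting to, say, $\{v_0,v_2,v_5\}$ inside $G_4(v_2)$ yields only that $v_0$ and $v_2$ share at least~5 common neighbors, which is not in itself absurd. I anticipate that combining the constraints coming from the several available independent triples on the path, in particular $\{v_0,v_2,v_4\}$, $\{v_0,v_2,v_5\}$, $\{v_0,v_3,v_5\}$, and $\{v_1,v_3,v_5\}$, evaluated in the balls $G_4(v_2)$ and $G_4(v_3)$, and exploiting the 2-connectedness of these balls to produce an extra independent triple by replacing a path vertex with one of the common neighbors forced by the earlier inequalities, will eventually force a contradiction. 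Once the diameter bound is established, \cref{oldthm:bondy} applied to the 2-connected graph~$G$ immediately yields that every longest cycle of~$G$ is dominating.
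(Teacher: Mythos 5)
Your overall plan---reduce to \cref{oldthm:bondy} by showing $G$ is 2-connected and satisfies the global condition $d(x)+d(y)+d(z)\ge|V(G)|+2$---is the same as the paper's, and your 2-connectedness argument and your treatment of diameter at least~6 are correct. However, the gap you flag in the diameter-5 case is not merely an unfinished computation: the intermediate claim you are trying to establish, that the hypotheses force the diameter of~$G$ to be at most~4, is false, so no combination of the inequalities you list can close it. Take disjoint sets $V_0,\dotsc,V_5$ with $|V_0|=|V_5|=1$, $|V_1|=|V_4|=5$, $|V_2|=|V_3|=2$, and join two vertices if and only if they lie in the same or in consecutive sets. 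This graph has $16$ vertices, diameter~5, and is 2-connected; the degrees by level are $5,7,8,8,7,5$, and checking the four possible index patterns of independent triples, $(0,2,4)$, $(0,2,5)$, $(0,3,5)$, $(1,3,5)$, gives degree sums $20,18,18,20\ge 18=|V(G)|+2$. Hence Bondy's global condition holds, so in particular $d(x)+d(y)+d(z)\ge|M_4(v)|+2$ for every $v$ and every independent triple of interior vertices of $G_4(v)$, and (as in the first half of the paper's equivalence argument, which derives 2-connectedness of all balls of radius~4 from Bondy's hypotheses, or by direct inspection here) every ball of radius~4 is 2-connected. So the hypotheses of the proposition are satisfied by a graph of diameter~5.

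What is true, and what suffices, is the weaker statement that \emph{some} ball of radius~4 is all of~$G$. The paper chooses $v$ maximizing $|M_4(v)|$ and assumes $N_5(v)\ne\emptyset$; taking a path $vx_1x_2x_3x_4x_5$ with $d(v,x_i)=i$, it first shows that $d(x_3,y)\le2$ for every $y\in N_3(v)$ (otherwise $v,x_3,y$ is an independent triple of interior vertices of $G_4(v)$ whose neighborhoods are pairwise disjoint inside $M_4(v)$, violating the degree condition), then that $N_2(v)\subseteq M_4(x_3)$ as well, whence $M_4(v)\cup N_5(v)\subseteq M_4(x_3)$ and $|M_4(x_3)|>|M_4(v)|$, contradicting the choice of~$v$. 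With $G_4(v)=G$ for this one vertex, every vertex is interior, $|M_4(v)|=|V(G)|$, and \cref{oldthm:bondy} applies exactly as you intended. (In the example above this is consistent: a vertex of $V_2$ has eccentricity~3, so some ball of radius~4 is the whole graph even though the diameter is~5.) To repair your proof you must replace the diameter bound by this radius-type statement; the single-shortest-path counting you propose for $k=5$ cannot succeed as stated.
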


\begin{proof}
Let $G$ satisfy the conditions of \cref{oldthm:bondy} and let $v\in V(G)$.
Then $d(x)+d(y)+d(z)\geq |M_4(v)|+2$
for all triples of independent interior vertices $x,y,z$ of the ball $G_4(v)$.
Now we show that $G_4(v)$ is $2$-connected.
It is evident if $G_4(v)=G$.
Assume that $G_4(v)\not=G$ and that $x$ is a cut vertex of $G_4(v)$.
Clearly $d(v,x)\le3$, and there is a neighbor~$u$ of~$x$
such that $d(v,u)=d(v,x)+1$ and every $(v,u)$-path in $G_4(v)$ contains~$x$.
Let $P$ be a shortest $\pathto{v}{u}$-path in $G-x$
(such a path exists since $G$ is $2$-connected).
Then $P$ contains at least one vertex from the set $N_5(v)$.
Let $y$ be the first one and $y'$ the last one of them.
Then $vPy=vv_1v_2v_3v_4y$ where $d(v,v_i)=i$ and $v_i\ne x$ for $i=1,2,3,4$.
Furthermore, let $z$ be the successor of $y'$ on $y'Pu$.
Clearly, $z\in N_4(v)$, and $v_3$~and $z$ are in different components of $G_4(v)-x$.
Then $d(v)+d(v_3)+d(z)\le\card{V(G)}-2$, a contradiction.
Thus every ball of radius~4 in~$G$ is 2-connected,
so the conditions of \cref{prop:equivbondy} hold.

Conversely, suppose that the conditions of \cref{prop:equivbondy} hold.
Consider a vertex $v\in V(G)$ satisfying
$ |M_4 (v)| = \max_{u\in V(G)}|M_4 (u)|$,
and suppose that $M_4(v)\not=V(G)$.
Then $N_5(v)\not=\emptyset$.
Let $vx_1x_2x_3x_4x_5$ be a path in $G$ where $d(v,x_i)=i$.
If there is a vertex $ y\in N_3(v)$ with $d(x_3,y)\geq 3$ then
\[d(v)\leq |M_4(v)|-|M_1(x_3)|-|M_1(y)|-1=|M_4(v)|-3-d(x_3)-d(y).\]
Thus $d(x_3)+d(y)+d(v)<|M_4(v)|$ for a triple of independent interior
vertices $x_3, y, v$ of the ball $G_4(v)$, which contradicts the condition of
\cref{prop:equivbondy}. Therefore $d(x_3,y)\leq 2$ for every $ y\in N_3(v)$.
Then it is easy to see that $M_4(x_3)$ contains
$\set{v}\cup N_1(v)\cup N_3(v)\cup N_4(v)\cup N_5(v)$.
We will show that $M_4(x_3)$ also contains $N_2(v)$.
Suppose to the contrary that there is $z\in N_2(v)\setminus M_4(x_3)$.
Then $d(z,x_1)=3$ since $z\notin M_4(x_3)$.
Also clearly $d(x_1,x_4)=3$, and $d(z,x_4)\ge3$ since $N(z)\cap N_3(v)=\emptyset$.
Then $d(z)+d(x_1)+d(x_4)\le M_4(x_1)-3$, a contradiction. Thus $N_2(v)\subset M_4(x_3)$
which implies $M_4(v)\cup N_5(v)\subseteq M_4(x_3)$.
Since
$N_5(v)\not=\emptyset$,
we have $|M_4(x_3)|>|M_4(v)|$, which contradicts our assumption.
This contradiction proves that $M_4(v)=V(G)$.
This implies that the conditions of \cref{oldthm:bondy} hold.
Thus \cref{prop:equivbondy} is equivalent to \cref{oldthm:bondy}.
\end{proof}

Now, according to step~2 of our method of localization,
we should try to show that the following
condition guarantees that every longest cycle of $G$ is dominating:
$d(x)+d(y)+d(z)\geq |M_3(v)|+2$
for every vertex $v\in V(G)$
and for all triples of
independent interior vertices $x, y, z$ of $G_3(v)$.
We will show that indeed a slightly stronger result holds:

\begin{theorem}
\label{thm:localbondy}
Let $G$ be a connected finite graph on at least three vertices such that
every ball of radius~3 in $G$ is 2-connected and
$d(x)+d(y)+d(z)\ge\card{M_3(v)}+2$
for every vertex $v\in V(G)$ and
for all triples of independent vertices $x,y,z$ of $G_2(v)$.
Then every longest cycle of $G$ is dominating.
\end{theorem}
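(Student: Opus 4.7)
My plan is to proceed by contradiction from a hypothetical longest non-dominating cycle~$C$ and localize Bondy's classical argument inside a single radius-$2$ ball, with the count $\card{M_3(w)}$ providing the required tightness in place of $\card{V(G)}$.

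First I would derive $2$-connectivity of~$G$ from the $2$-connectivity of every ball $G_3(v)$: given any vertex~$s$, any two of its neighbors $u,u'$ lie in $G_3(s)$ and hence stay connected in $G_3(s)-s \subseteq G-s$, so $s$ cannot be a cut vertex. Now suppose some longest cycle~$C$ of~$G$ is not dominating, pick a component~$H$ of $G-C$ with $\card{V(H)}\ge 2$, and choose an edge $ab$ of~$H$. Two-connectivity of~$G$ supplies internally disjoint paths from $\{a,b\}$ to distinct vertices $v_i,v_j\in V(C)$; I would pick these so that the arc $v_i\overrightarrow{C}v_j$ is as short as possible. The standard longest-cycle argument then makes $\{a,v_i^+,v_j^+\}$ (possibly after relabelling) an independent triple: any edge among them, combined with the paths and cycle arcs, would yield a cycle longer than~$C$.

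Next I would pick a center~$w$ close to both $v_i$ and $v_j$---naturally a vertex on the shorter arc between them, or $w=a$ when the attachment paths are single edges---so that $\{a,v_i^+,v_j^+\}\subseteq M_2(w)$. The hypothesis then yields $d(a)+d(v_i^+)+d(v_j^+)\ge\card{M_3(w)}+2$. The matching upper bound comes from a local version of Bondy's counting: since the triple lies in $M_2(w)$, all its neighbors lie in $M_3(w)$, and the classical double-counting---each neighbor of $v_i^+$ on $\overrightarrow{C}$ being ``blocked'' by the longest-cycle hypothesis from being preceded by a neighbor of $a$ or $v_j^+$---gives $d(a)+d(v_i^+)+d(v_j^+)\le\card{M_3(w)}+1$, the desired contradiction.

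The main obstacle is verifying that a single center~$w$ can simultaneously host the triple $\{a,v_i^+,v_j^+\}$ in $M_2(w)$ \emph{and} have the ``active'' portion of~$C$---roughly $v_i^+\overrightarrow{C}v_j$ together with short continuing arcs---inside $M_3(w)$, so that Bondy's counting transfers verbatim. In the worst case the attachment paths $a\to v_i$, $b\to v_j$ may be longer than single edges, pushing $v_i,v_j$ out of reach of any natural center; showing that these paths can always be taken short, using the $2$-connectivity of radius-$3$ balls to detour around any vertex separating~$a$ from~$C$ locally, will be the most delicate technical step. A secondary concern is the usual degenerate case analysis when $v_i^+=v_j$ or $v_j^+=v_i$, where the triple collapses and one must introduce $v_i^{++}$ or an analogous replacement to recover independence within the same ball.
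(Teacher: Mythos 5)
Your overall strategy---reduce to a longest non-dominating cycle, find an independent triple of the form $\{a,v_i^+,v_j^+\}$ lying in $M_2(w)$ for a common center~$w$, and beat $\card{M_3(w)}+2$ by a localized Bondy count---is indeed the spirit of the paper's proof (which centers its final count at the vertex $x^+$ itself and derives $d(x^+)+d(z^+)+d(u)\le\card{M_3(x^+)}+1$). But the step you flag as ``the most delicate technical step'' is not a technicality to be patched; it is the actual content of the proof, and the remedy you propose does not work. The two attachment vertices of a component~$H$ of $G-C$ on a longest cycle can be far apart in~$G$ no matter how short the attachment paths are taken: shortness of the paths $a\to v_i$, $b\to v_j$ says nothing about $d(v_i^+,v_j^+)$, and the $2$-connectivity of radius-$3$ balls cannot move the attachments closer together. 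In the paper this difficulty is overcome not by geometry but by the degree-sum hypothesis itself: in \cref{lem:localbondythm} one first manufactures a second attachment edge $vz$ inside $G_3(y^+)$ using the $2$-connectivity of that ball, and then, assuming $d(y^+,z^+)\ge3$, runs a long chain of forced distance equalities ($d(y^+,v)=3$, $d(y^+,z)=3$, the existence and location of the vertex $s$, $d(z^+,s)=3$, $d(z,s)=2$, $\card{W_v}\ge2$, $\card{N(z)\cap U}\ge2$) each obtained by exhibiting an independent triple in some $G_2(\cdot)$ that would otherwise violate the hypothesis; only at the end does one obtain a \emph{replacement} pair of attachment edges $u'y'$, $v'z$ with $d(y'^+,z^+)\le2$, to which \cref{lem:localbondylemma} applies. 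Your outline contains no mechanism for producing such a close pair, and without it there is no single center~$w$ hosting the triple.

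A second, smaller gap: even once close attachments are available, Bondy's double counting does not transfer ``verbatim.'' All neighbors of the triple do lie in $M_3(w)$, but the injections pairing common neighbors with non-neighbors must also land in $M_3(w)$ and must avoid the independent set $W^+$ of successors of neighbors of~$u$; the paper handles this by partitioning $M_3(x^+)$ into the three sets $V_1,V_2,V_3$ and proving separate claims for each, where the stepping argument on the arc $z^+\dir C x$ needs the extra option of a longer cycle that omits exactly one vertex of~$C$ (harmless for the finite theorem, but it shows the count is not the classical one). So as written your proposal reproduces the easy outer shell of the argument---the theorem itself follows from \cref{lem:localbondythm} in one line---while leaving open precisely the parts (forcing a nearby second attachment via the hypothesis, and the modified three-part count) that make the localization true.
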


We need the two lemmas below, which we will also use for the proof of a result on infinite graphs in \cref{sec:infinite}.
\begin{remark}
Note that the restrictions on the cycles~$C'$ in the lemmas are needed for the infinite case only;
to prove \cref{thm:localbondy} it is sufficient if $C'$ is any cycle longer than~$C$.
\end{remark}

\begin{lemma}
\label{lem:localbondylemma}
Let $G$ be a graph satisfying the conditions of \cref{thm:localbondy},
and let $C$ be a cycle of~$G$ such that
$G-C$ has some connected subgraph~$H$ containing at least two vertices.
Choose a direction $\dir C$ of~$C$.
If $uy$ and $vz$ are two non-adjacent edges in $G$ such that
$u,v\in V(H)$, $y,z\in V(C)$,
$d(y^+,z^+)\le2$,
and $N(u)\subseteq V(H)\cup V(C)$,
then there is a longer cycle $C'$
that differs from $C$ only in $H\cup G_7(z)$,
such that either $V(C)\subset V(C')$
or $\card{V(C)\setminus V(C')}=1$ and
the successor on~$C$ of the unique vertex in $V(C)\setminus V(C')$ is adjacent to~$z^+$.
\end{lemma}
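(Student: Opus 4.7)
The plan is to produce $C'$ by the standard cycle-rotation trick: delete the two edges $yy^+$ and $zz^+$ from $C$ to obtain the two arcs $\alpha=y^+\overrightarrow{C}z$ and $\beta=z^+\overrightarrow{C}y$, and splice in a detour through $H$ built from the edges $uy$ and $vz$ together with a shortest $(u,v)$-path $P$ in $H$. Such a $P$ exists because $H$ is connected and contains both $u$ and $v$. The concatenation of $\beta$, the path $y\,u\,P\,v\,z$, and the reverse of $\alpha$ is then a simple path from $z^+$ to $y^+$ whose vertex set equals $V(C)\cup V(P)\cup\{u,v\}$; it remains to close this path into a cycle using the hypothesis $d(y^+,z^+)\le 2$.

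I would then split into cases according to how the closing is carried out. If $y^+z^+\in E(G)$, close directly with this edge; the resulting $C'$ is a simple cycle (using $V(H)\cap V(C)=\emptyset$), strictly longer than $C$, with $V(C)\subset V(C')$. Otherwise $d(y^+,z^+)=2$, and I pick a common neighbor $w$ of $y^+$ and $z^+$. If $w\notin V(C)\cup V(P)\cup\{u,v\}$, close with the path $y^+wz^+$; again $V(C)\subset V(C')$. If $w\in V(P)\cup\{u,v\}$, then the chord from $w$ to $y^+$ or to $z^+$ allows one to shortcut a portion of $P$; since $H$ contains at least two vertices, the shortened detour still introduces a new vertex into $C'$, giving $V(C)\subset V(C')$. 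The remaining case $w\in V(C)$ is the delicate one: use the chord $y^+w$ (together with $wz^+$ if needed) to reroute the cycle. The construction can be arranged so that at most one vertex $x$ of $C$ is omitted, and by direct inspection $x^+=w$, which is adjacent to $z^+$; this gives the second clause of the conclusion.

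Finally, I would confirm that the modifications lie within $H\cup G_7(z)$. From $d(y,z)\le d(y,y^+)+d(y^+,z^+)+d(z^+,z)\le 4$ one obtains $y,y^+,z,z^+\in M_5(z)$, and any common neighbor $w$ of $y^+$ and $z^+$ satisfies $w\in M_3(z)$. Hence every deleted edge of $C$ is incident to a vertex of $M_5(z)$, every added closing edge joins vertices of $M_3(z)$, and the $(u,v)$-path lies entirely in $H$; so all changes happen inside $H\cup G_5(z)\subseteq H\cup G_7(z)$. The hard part will be the subcase $w\in V(C)$: there one must analyze the rotation carefully to ensure that at most one $C$-vertex is omitted from $C'$ and that the successor on $C$ of the omitted vertex is forced to be a neighbor of $z^+$.
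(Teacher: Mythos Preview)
Your argument handles the cases $y^+z^+\in E(G)$ and ``common neighbour $w\notin V(C)$'' correctly, but the case $w\in V(C)$ is not merely delicate: it cannot be closed by a local rerouting of the kind you sketch. With only the data $uy$, $vz$, a $(u,v)$-path in $H$, and the two chords $y^+w$, $wz^+$ for a single $w\in V(C)$, there is in general no cycle that misses at most one vertex of $C$. For instance, if $w$ lies on the arc $y^{++}\dir C z^-$, any splice using $y^+w$ or $wz^+$ drops an entire sub-arc of $C$, not a single vertex; your claim that ``by direct inspection $x^+=w$'' has no basis. One can write down small configurations (a short cycle $C$ with $u,v$ pendant on $y,z$ and a single common neighbour $w$ of $y^+,z^+$ lying on $C$) in which no desired $C'$ exists at all---so the degree hypothesis of \cref{thm:localbondy} is essential, and any proof must invoke it.

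The paper's proof is accordingly quite different in character from yours. It argues by contradiction: assuming no desired cycle, it sets $W=N(u)\cap V(C)$, shows $W^+\cup\{u,z^+\}$ is independent with $d(x_1^+,x_2^+)=2$ for all $x_1,x_2\in W$, chooses a particular $x\in W$ with no neighbours of $u$ on $x^+\dir C z^-$, and then partitions $M_3(x^+)$ into three pieces $V_1,V_2,V_3$ to bound $d(x^+)+d(z^+)+d(u)$ piecewise. The bounds come from forbidden-chord arguments (each potential chord would yield a desired cycle), and summing them gives $d(x^+)+d(z^+)+d(u)\le\card{M_3(x^+)}+1$, contradicting the degree condition. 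The ``one vertex omitted'' clause of the lemma arises precisely in this counting: if some $b\in N(x^+)\cap V_3$ has $b^{++}\in N(z^+)$, then the cycle $xu\dir Pvz\revdir C x^+ b\revdir C z^+ b^{++}\dir C x$ omits exactly $b^+$, whose successor $b^{++}$ is adjacent to $z^+$. This is also why the locality bound is $G_7(z)$ rather than your $G_5(z)$: the vertices of $W^+$ used in these constructions can be as far as distance $7$ from $z$. In short, a red flag in your proposal is that it never touches the inequality $d(x)+d(y)+d(z)\ge\card{M_3(v)}+2$; that inequality is where all the work is.
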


We will call such a cycle $C'$ a \emph{desired cycle}.
Note that unless stated otherwise, any desired cycle that will be encountered will satisfy $V(C)\subset V(C')$.

\begin{proof}
Suppose that there is no desired cycle.
Let $\dir P$ be a $\pathto uv$-path in $H$ directed from $u$ to $v$.
Clearly $d(y^+,z^+)\ne1$, as otherwise there would be a desired cycle
$yu\dir Pvz\revdir Cy^+z^+\dir Cy$.

Thus $d(y^+,z^+)=2$.
Let $W=N(u)\cap V(C)$ and $U=V(H)$.
Clearly the set $W^+\cup\set{u,z^+}$ is independent and $N(x^+)\cap U=\emptyset$ for all $x\in W$,
because otherwise there would be a desired cycle~$C'$
(note that $W\cup W^+\subset M_7(z)$ since $W\cup W^+\subset M_4(y^+)$ and $d(y^+,z)\le3$).
%
Thus $N(u)\cap N(x^+)\subseteq W$ for all $x\in W$.
Similarly $N(z^+)\cap U=\emptyset$, so $N(u)\cap N(z^+)\subseteq W$.
%
Furthermore
\begin{equation}
d(x_1^+,x_2^+)=2 \text{ for all distinct } x_1,x_2\in W,
\end{equation}
because otherwise $d(x_1^+,x_2^+)\ge3$ for some $x_1,x_2\in W$,
so $N(x_1^+)\cap N(x_2^+)=\emptyset$ and therefore
$d(x_1^+)+d(x_2^+)+d(u)\le\card{M_3(u)}+\card{W}-\card{W^+\cup\set{u}}=\card{M_3(u)}-1$.

Pick $x\in W$ such that $N(u)\cap V(x^+\dir C z^-)=\emptyset$.
Then $x^+z^+\notin E(G)$.
If $d(x^+,z^+)\ge3$,
then $d(x^+)+d(z^+)+d(u)\le\card{M_3(y^+)}+\card{W}-\card{W^+\cup\set{u}}=\card{M_3(y^+)}-1$.
Thus $d(x^+,z^+)=2$.

We will partition $M_3(x^+)$ into three sets.
Let
	$V_1=V(G-C) \cap M_3(x^+)$,
	$V_2=V(x^+\dir C z)\cap M_3(x^+)$,
	$V_3=V(z^+\dir C x)\cap M_3(x^+)$.
For each vertex~$w$, we denote by $d_i(w)$ the number $\card{N(w)\cap V_i}$, $i=1,2,3$.
Note that $x^+,z^+,u\in M_2(x^+)$,
so $N(x^+)\cup N(z^+)\cup N(u)\subseteq M_3(x^+)=V_1\cup V_2\cup V_3$.
The \lcnamecref{lem:localbondylemma} now follows from
the three claims below, since they imply that
\begin{equation*}
d(x^+)+d(z^+)+d(u)=\sum_{i=1}^3\bigl(d_i(x^+)+d_i(z^+)+d_i(u)\bigr)\le\card{M_3(x^+)}+1,
\end{equation*}
a contradiction.

\begin{claim}
$d_1(x^+)+d_1(z^+)+d_1(u)\le\card{V_1}-1$.
\end{claim}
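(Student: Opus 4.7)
The plan is to bound $d_1(u)+d_1(x^+)+d_1(z^+)$ from above by counting contributions of vertices in $V_1$ via inclusion–exclusion, using the structural facts already established, and then to rule out the single remaining obstruction by exhibiting a longer desired cycle.

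First I would gather the relevant facts proved earlier in the proof: $\{u,x^+,z^+\}$ is independent; $N(u)\cap N(x^+)\subseteq W\subseteq V(C)$ and $N(u)\cap N(z^+)\subseteq W\subseteq V(C)$; $N(x^+)\cap V(H)=N(z^+)\cap V(H)=\emptyset$; and $u\in V_1$, since $u\in V(H)\subseteq V(G-C)$ and $d(x^+,u)=2$ via the edge $xu$. Writing $A=N(u)\cap V_1$, $B=N(x^+)\cap V_1$, $D=N(z^+)\cap V_1$, the inclusion $V_1\subseteq V(G-C)$ combined with the second fact gives $A\cap B=A\cap D=\emptyset$, so inclusion–exclusion yields
\[d_1(u)+d_1(x^+)+d_1(z^+)=|A|+|B|+|D|=|A\cup B\cup D|+|B\cap D|.\]
Because $u\in V_1$ belongs to none of $A,B,D$ (by independence and the absence of loops), this gives $|A\cup B\cup D|\le|V_1|-1$, and the claim will follow once I show $B\cap D=\emptyset$.

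Next I would argue by contradiction: if some $m\in N(x^+)\cap N(z^+)\cap V(G-C)$ existed, then $m\notin V(H)$ by the third fact above, and I would produce the longer cycle
\[C'=xu\dir Pvz\revdir Cx^+mz^+\dir Cx,\]
which contains every vertex of $C$ together with the non-cycle vertices $V(\dir P)\cup\{m\}$. Thus $V(C)\subset V(C')$ and $\card{V(C')}>\card{V(C)}$, contradicting the standing assumption that no desired cycle exists.

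The hard part will be verifying that $C'$ is in fact a desired cycle, i.e.\ that its symmetric difference with $C$ is confined to $H\cup G_7(z)$. The new vertices $V(\dir P)\subseteq V(H)$ and $m\in M_2(z)$ are immediate, and for most endpoints of the affected edges the needed distance bound follows from $d(x^+,z^+)=2$; the one endpoint that requires work is $x$ (appearing in the removed edge $xx^+$ and the added edge $xu$). I expect to bound $d(x,z)$ by chaining $d(x,y)\le2$ (since $x,y\in N(u)$), $d(y,y^+)=1$, $d(y^+,z^+)\le2$, and $d(z^+,z)=1$ to obtain $d(x,z)\le6$, placing $x$ inside $M_7(z)$ and completing the verification.
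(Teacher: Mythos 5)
Your argument is correct and is essentially the paper's own proof: the pairwise disjointness of the three neighborhoods inside $V_1$ (for the pairs involving $u$ via $N(u)\subseteq V(H)\cup V(C)$ and $N(x^+)\cap V(H)=N(z^+)\cap V(H)=\emptyset$, and for the pair $x^+,z^+$ via the desired cycle $xu\dir Pvz\revdir Cx^+mz^+\dir Cx$), together with the observation that $u\in V_1$ is adjacent to none of $x^+,z^+,u$, is exactly what the paper invokes. You merely make explicit the cycle construction and the verification that it differs from $C$ only inside $H\cup G_7(z)$, which the paper leaves implicit.
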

This follows from the fact that no two vertices of
$x^+$, $z^+$, and $u$ can have any common neighbors outside~$C$
(otherwise there would be a desired cycle),
and that $u\in V_1\setminus\bigl(N(x^+)\cup N(z^+)\cup N(u)\bigr)$.

\pagebreak[2]
\begin{claim}
$d_2(x^+)+d_2(z^+)+d_2(u)\le\card{V_2}+1$.
\end{claim}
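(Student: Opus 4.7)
My plan is to bound the three terms $d_2(x^+)$, $d_2(z^+)$, and $d_2(u)$ separately. First, $d_2(u)\le 1$ is immediate: by the choice of~$x$, $N(u)\cap V(x^+\dir{C}z^-)=\emptyset$, and since $V_2\subseteq V(x^+\dir{C}z)$, the only vertex of $V_2$ that can be adjacent to~$u$ is~$z$ itself.

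For the remaining bound $d_2(x^+)+d_2(z^+)\le\card{V_2}$, I would use a shift argument along the arc $V(x^+\dir{C}z)$. Set $A=N(x^+)\cap V_2$ and $B=N(z^+)\cap V_2$; both are contained in $V(x^{++}\dir{C}z)$, since $x^+\notin N(x^+)$ and $d(x^+,z^+)=2$. Writing $A^-=\defset{a^-}{a\in A}$ for the $\dir{C}$-predecessors of~$A$, each $a\in A$ satisfies $d(x^+,a^-)\le 2$, so $A^-\subseteq V_2\cap V(x^+\dir{C}z^-)$ and $a\mapsto a^-$ is a bijection. Granting the key disjointness $A^-\cap B=\emptyset$, I obtain $\card{A}+\card{B}=\card{A^-}+\card{B}\le\card{V_2}$, which combined with $d_2(u)\le 1$ yields the claim.

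The disjointness---that no $w\in V(x^{++}\dir{C}z^-)$ admits both $wz^+\in E$ and $w^+x^+\in E$---is the main obstacle. Given such a hypothetical~$w$, I would exhibit the cycle
\[
C':\;x^+\,\dir{C}\,w\,z^+\,\dir{C}\,x\,u\,\dir{P}\,v\,z\,\revdir{C}\,w^+\,x^+,
\]
built from the two chords $wz^+$ and $w^+x^+$, the path~$P$ through~$H$, and the edges $xu$ (available since $x\in W$) and $vz$. Tracing: the arcs $x^+\dir{C}w$, $z^+\dir{C}x$, and $z\revdir{C}w^+$, together with $u\dir{P}v$, visit respectively $\set{x^+}\cup A_2\cup\set{w}$, $A_4\cup(A_1\setminus\set{x^+})$, $A_3\setminus\set{w}$, and $V(P)$; these sets are pairwise disjoint, so $V(C')=V(C)\cup V(P)$ and $\card{V(C')}=\card{V(C)}+\card{V(P)}>\card{V(C)}$. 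Locality is also immediate: since $d(x^+,z)\le d(x^+,w)+d(w,z^+)+1\le 5$, every endpoint of an edge in $E(C)\triangle E(C')$ lies in $M_7(z)$, so $C'$ differs from~$C$ only within $H\cup G_7(z)$, providing a desired cycle as required by \cref{lem:localbondylemma}.
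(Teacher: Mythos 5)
Your proof is correct and takes essentially the same route as the paper: $d_2(u)\le1$ follows from the choice of~$x$ exactly as in the paper, and your key disjointness (no $w$ with $wz^+\in E(G)$ and $w^+x^+\in E(G)$) is precisely the paper's observation that $b\in N(x^+)\cap V_2$ forces $b^-\notin N(z^+)$, refuted by the very same desired cycle. Your predecessor-shift count $\card{A^-}+\card{B}\le\card{V_2}$ is just slightly tidier bookkeeping than the paper's ``step backwards'' pairing of common neighbors with non-neighbors (and the sets $A_1,\dotsc,A_4$ in your cycle-tracing are never defined, but the intended arc decomposition of $V(C)$ is clear, so that is only a cosmetic slip).
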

By the choice of~$x$, $N(u)\cap V_2\subseteq\set z$,
that is, $d_2(u)\le1$.
Furthermore, if a vertex $b$ belongs to $N(x^+)\cap V_2$
then $b^-\notin N(z^+)$,
because otherwise there would be a desired cycle
$xu\dir Pvz\revdir Cbx^+\dir Cb^-z^+\dir Cx$.
Thus for any vertex $w\in N(x^+)\cap N(z^+)\cap V_2$ we can
“step backwards” through $w^-$, $w^{--}$ etc. to find a vertex
$q\in V_2\setminus\bigl(N(x^+)\cup N(z^+)\bigr)$ such that
$V(q^+\dir C w)\subset N(x^+)$ (possibly $q=x^+$).
This implies that $\card{N(x^+)\cap N(z^+)\cap V_2}\le\card{V_2\setminus\bigl(N(x^+)\cup N(z^+)\bigr)}$,
so
\begin{equation}
\label{lem:localbondylemma:eq:d2}
\begin{aligned}
d_2(x^+)+d_2(z^+)
&=\card[\big]{\bigl(N(x^+)\cup N(z^+)\bigr)\cap V_2}+\card{N(x^+)\cap N(z^+)\cap V_2}\\
&\le\card[\big]{\bigl(N(x^+)\cup N(z^+)\bigr)\cap V_2}
	+\card[\big]{V_2\setminus\bigl(N(x^+)\cup N(z^+)\bigr)}\\
&=\card{V_2}.
\end{aligned}
\end{equation}
Therefore $d_2(x^+)+d_2(z^+)+d_2(u)\le\card{V_2}+1$.

\begin{claim}
\label{lem:localbondylemma:claim3}
$d_3(x^+)+d_3(z^+)+d_3(u)\le\card{V_3}+1$.
\end{claim}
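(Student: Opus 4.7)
My plan is to mirror the argument of Claim~2, running the stepping \emph{forward} along $\dir C$ instead of backward, since the segment $V_3=V(z^+\dir C x)\cap M_3(x^+)$ lies on the opposite side of $C$ from $V_2$. First I would establish the analogous one-step inequality: whenever $b\in N(x^+)\cap V_3$ and $b\ne x$, then $b^+\notin N(z^+)$. Otherwise, the cycle
\[
x^+b\revdir Cz^+b^+\dir Cxu\dir Pvz\revdir Cx^+
\]
would be a desired cycle: its three $C$-portions $V(z^+\dir Cb)$, $V(b^+\dir Cx)$ and $V(x^+\dir Cz)$ partition $V(C)$, while the detour through $P$ adds $V(P)$ off~$C$. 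The containment of the changes in $H\cup G_7(z)$ is automatic, since $b^+\in N(z^+)$ forces $d(z,b^+)\le 2$ and hence $b,b^+\in M_3(z)$. The boundary case $b=x$ is vacuous, since then $b^+=x^+\notin N(z^+)$ by the independence of $W^+\cup\set{u,z^+}$.

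Unlike in Claim~2, the bound $d_3(u)\le 1$ does not follow from the choice of $x$; indeed, $N(u)\cap V_3=W\cap V_3$ can be arbitrarily large. To compensate, I would use the independent set $W^+\cup\set{u,z^+}$ a second time: for every $b\in W\cap V_3\setminus\set{x}$, the successor $b^+$ lies in $W^+$ and is therefore non-adjacent to each of $x^+$, $u$ and $z^+$. Whenever $b^+$ also lies in $V_3$, this yields a vertex of $V_3\setminus(N(x^+)\cup N(z^+)\cup N(u))$, and different $b$'s give different $b^+$'s. Combining this injection with a forward-stepping argument that bounds $\card{N(x^+)\cap N(z^+)\cap V_3}$ by $\card{V_3\setminus(N(x^+)\cup N(z^+))}$, in the same spirit as the $V_2$ estimate of Claim~2, should add up to $d_3(x^+)+d_3(z^+)+d_3(u)\le\card{V_3}+1$.

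The main obstacle I expect lies at the boundary of $V_3$: a vertex $b\in W\cap V_3$ with $d(x^+,b)=3$ could have $d(x^+,b^+)=4$, so that $b^+$ escapes $V_3$ and the slot-reservation above does not apply to $b$. I would need either to show that such boundary vertices are few — possibly using that $b^+\notin W$ forces $W$-vertices to be separated along $C$, so they cannot accumulate at the edge of $V_3$ — or to refine the forward-stepping injection so that it absorbs such $b$ into the overall count. A fallback would be a second, parallel cycle construction using an edge $ub$ with $b\in W\cap V_3$, producing an additional forbidden configuration that removes the problematic $b$'s.
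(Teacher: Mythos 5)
Your one-step inequality ($b\in N(x^+)\cap V_3\Rightarrow b^+\notin N(z^+)$, via the cycle through $x^+b$, $z^+b^+$, $xu$, $vz$) is correct and coincides with part of the paper's argument, and the obstacle you single out at the end is actually not one: for $b\in W\cap V_3\setminus\set{x}$, the fact established earlier in the lemma that $d(x_1^+,x_2^+)=2$ for all distinct $x_1,x_2\in W$ gives $b^+\in M_2(x^+)$, and $b\ne x$ keeps $b^+$ on $z^+\dir Cx$, so $W^+\setminus\set{x^+}\subseteq V_3$ and no $b^+$ "escapes". The genuine gap is elsewhere: your two savings compete for the same vertices. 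A forward-stepping pairing built only from the one-step rule stops at the first vertex $q$ beyond $w$ with $q\notin N(x^+)$; nothing prevents this $q$ from lying in $W^+$ (a vertex of $W$ may be adjacent to $x^+$, and then its successor, a non-neighbor of both $x^+$ and $z^+$ by independence of $W^+\cup\set{u,z^+}$, is a perfectly possible stopping point). But the vertices of $W^+\cap V_3$ are exactly those you have already reserved, via $b\mapsto b^+$, to offset $d_3(u)=\card{W\cap V_3}$. If even one $q$ lands in $W^+$, the two injections overlap and your count only yields $\card{V_3}+2$ or worse; the phrase "should add up" conceals precisely this collision.

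The paper closes this hole by proving the stronger two-step statement: for every $b\in N(x^+)\cap V_3$, both $b^+\notin N(z^+)$ and $b^{++}\notin N(z^+)$. Since two consecutive vertices of $C$ cannot both lie in the independent set $W^+$, this lets the stepping walk past a $W^+$-vertex and terminate at some $q\in V_3\setminus\bigl(N(x^+)\cup N(z^+)\cup W^+\bigr)$, except possibly for the last $w$ along $\dir C$, which is the source of the $+1$. Forbidding $b^{++}\in N(z^+)$ is exactly where the relaxed notion of a desired cycle is indispensable: the cycle $xu\dir Pvz\revdir Cx^+b\revdir Cz^+b^{++}\dir Cx$ misses the single vertex $b^+$, whose successor $b^{++}$ is adjacent to $z^+$ --- this is why the lemma's statement tolerates $\card{V(C)\setminus V(C')}=1$. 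Your constructions only produce cycles with $V(C)\subset V(C')$, so within your framework this second forbidden configuration is unavailable; without it (or some substitute that keeps the images of the stepping injection out of $W^+$), the claimed bound $d_3(x^+)+d_3(z^+)+d_3(u)\le\card{V_3}+1$ does not follow. Your proposed fallback of an extra cycle construction through an edge $ub$ is aimed at the non-issue of $b^+$ leaving $V_3$, not at this collision.
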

If $z\in W$, then $d_3(u)=\card{W}-1$, otherwise $d_3(u)=\card{W}$.
We will show that $d_3(x^+)+d_3(z^+)$ is at most $\card{V_3}-\card{W}+2$
and $\card{V_3}-\card{W}+1$, respectively, in these cases.

For any vertex $b\in N(x^+)\cap V_3$
it holds that $b^+\notin N(z^+)$ and $b^{++}\notin N(z^+)$,
as otherwise there would be a desired cycle
$xu\dir P vz\revdir Cx^+b\revdir Cz^+g\dir Cx$,
where $g\in\set{b^+,b^{++}}$.
(Note that if $g=b^{++}$ then $V(C)\setminus V(C')=\set{b^+}$
and the vertex $b^{++}$, the successor of $b^{+}$, is adjacent to~$z^+$.
If $g=b^+$ then $V(C)\subset V(C')$.)
Also at most one of $b^+$ and $b^{++}$ are in $W^+$.
This means that there are two possibilities:
\begin{itemize}
\item both $b^+$ and $b^{++}$ lie in $N(x^+)\cup W^+$,
in which case at least one of them is adjacent to $x^+$, or
\item at least one of $b^+$ and $b^{++}$ does not lie in $N(x^+)\cup W^+$.
\end{itemize}
Thus for any vertex $w\in N(x^+)\cap N(z^+)\cap V_3$ we can
“step forwards” through $w^+$, $w^{++}$ etc,
to find a vertex $q\in V_3\setminus\bigl(N(x^+)\cup N(z^+)\cup W^+\bigr)$
such that $V(w\dir C q^-)\subset N(x^+)\cup W^+$,
unless $V(w\dir C x)\subset N(x^+)\cup W^+$.
That is, for each $w\in N(x^+)\cap N(z^+)\cap V_3$ except possibly “the last” one along $\dir C$,
we can find a unique $q\in V_3\setminus W^+$ that is not adjacent to $x^+$ or $z^+$.
Thus

\begin{equation}
\begin{aligned}
\card{N(x^+)\cap N(z^+)\cap V_3}
&\le\card{V_3\setminus\bigl(N(x^+)\cup N(z^+)\cup W^+\bigr)}+1\\
&=\card{V_3\setminus\bigl(N(x^+)\cup N(z^+)\bigr)}-\card{W^+\cap V_3}+1.
\end{aligned}
\end{equation}
If $z\notin W$, we furthermore get
\begin{equation}
\card{N(x^+)\cap N(z^+)\cap V_3}
\le\card[\big]{V_3\setminus\bigl(N(x^+)\cup N(z^+)\bigr)}-\card{(W^+\cap V_3)\cup\set{z^+}}+1.
\end{equation}

Thus if $z\in W$, then with calculations similar to \cref{lem:localbondylemma:eq:d2} we get,
using $W^+\cap V_3=W^+\setminus\set{x^+}$,
that $d_3(x^+)+d_3(z^+)\le \card{V_3}-\card{W^+}+2$.
If $z\notin W$, we get $d_3(x^+)+d_3(z^+)\le \card{V_3}-\card{W^+}+1$.
In both cases $d_3(x^+)+d_3(z^+)+d_3(u)\le\card{V_3}+1$.
\end{proof}

\begin{lemma}
\label{lem:localbondythm}
Let $G$ be a graph satisfying the conditions of \cref{thm:localbondy}.
Let $C$ be a cycle of~$G$ and $u$ a vertex in $G-C$ with a neighbor on~$C$.
Then if $N(u)\nsubseteq V(C)$, there is a longer cycle $C'$
that contains a neighbor of~$u$,%
\footnote{Note that the vertex~$u$ itself can be on~$C'$ or outside of~$C'$}
and differs from~$C$ only in~$G_{12}(u)$.
\end{lemma}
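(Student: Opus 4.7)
The plan is to invoke \cref{lem:localbondylemma} after locating near $u$ a suitable connected $H\subseteq G-C$ and a second edge $vz$ with $v\in V(H)\setminus\{u\}$ and $z\in V(C)$ satisfying all the hypotheses of that lemma.

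Fix any $y\in N(u)\cap V(C)$ and any $u'\in N(u)\setminus V(C)$; both exist since $N(u)\cap V(C)\neq\emptyset$ and $N(u)\nsubseteq V(C)$. Because $G_3(u)$ is $2$-connected, $u$ is not a cut vertex of $G_3(u)$, so $G_3(u)-u$ is connected and therefore contains a path $Q$ from $u'$ to $y$. Traversing $Q$ from $u'$, let $z$ be the first vertex of $Q$ lying on $C$ and $v$ its immediate predecessor on $Q$. Then $v\in V(G-C)\setminus\set{u}$, $vz\in E(G)$, $v\in V(G_3(u))$, and $z\in V(G_4(u))$.

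Next I would take $H$ to be the subgraph of $G-C$ on vertex set $\set{u}\cup(N(u)\setminus V(C))\cup V(Q[u',v])$, i.e.\ the star at $u$ spanning its off-$C$ neighbors together with the initial segment of $Q$ up to $v$. This $H$ is connected through $u'$, contains the distinct vertices $u$ and $v$, and satisfies $N(u)\subseteq V(H)\cup V(C)$. Since $V(H)\subseteq V(G_3(u))$ and $d(u,z)\le4$, we have $H\cup G_7(z)\subseteq G_{11}(u)\subseteq G_{12}(u)$, which will give the required localization of $C'$.

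The principal obstacle is verifying the two remaining hypotheses of \cref{lem:localbondylemma}: that the edges $uy$ and $vz$ are non-adjacent (i.e.\ $y\neq z$) and that $d(y^+,z^+)\le2$. I would proceed by case analysis. If $z=y$, which occurs precisely when $Q$ meets $C$ only at its endpoint $y$, I would modify the construction by replacing $y$ with another vertex of $N(u)\cap V(C)$ when one exists, by reversing the orientation of $\dir C$, or by extending $Q$ one or two steps further inside the component of $G-C$ containing $u$, so that a new pair $(v,z)$ with $z\neq y$ is produced while still lying in a ball of radius~$5$ around $u$. For the distance condition $d(y^+,z^+)\le 2$, I would again split into cases according to the position of $z$ on $C$ relative to the set $N(u)\cap V(C)$; in the favorable cases the bound holds directly, while in the remaining configurations the triple degree-sum hypothesis of \cref{thm:localbondy}, applied to a suitable independent triple in some ball $G_2(w)$ assembled from $u$, $y^+$, $z^+$ and nearby vertices, should be violated, ruling out those configurations.

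Once all the hypotheses are satisfied, \cref{lem:localbondylemma} produces a cycle $C'$ longer than $C$ that differs from $C$ only in $H\cup G_7(z)\subseteq G_{12}(u)$, and $C'$ contains the vertex $y\in N(u)$ by the explicit form of the cycle in that lemma, completing the proof.
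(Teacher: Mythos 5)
Your reduction to \cref{lem:localbondylemma} is the right idea, and your use of $2$-connectedness of a small ball to manufacture the second edge $vz$ is close in spirit to the paper's argument; but the proposal has a genuine gap exactly where the real work of \cref{lem:localbondythm} lies, namely the hypothesis $d(y^+,z^+)\le2$. You assert that the configurations with $d(y^+,z^+)\ge3$ ``should be violated'' by one application of the degree-sum condition of \cref{thm:localbondy} to some independent triple assembled from $u$, $y^+$, $z^+$. That is not how this case is resolved, and no single such application suffices: when $d(y^+,z^+)\ge3$ one cannot in general place the relevant triple inside a common ball $G_2(w)$ and bound the degree sum by $\card{M_3(w)}+1$ without substantial further structural information. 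In the paper this case occupies most of the proof: after choosing the edges $uy$ and $vz$ carefully (three cases according to $W_u=N(u)\cap V(C)$, arranged so that either $v\in M_3(y^+)$ or $uz\in E(G)$), one proves $d(y^+,v)=3$ and $d(y^+,z)=3$, produces a vertex $s\in N(v)\cap N_2(y^+)$ and shows $s$ lies off $C$, establishes $d(z^+,s)=3$, $d(z,s)=2$, $\card{W_v}\ge2$ and $\card{N(z)\cap U}\ge2$, and only then obtains a \emph{new} pair of edges $u'y'$ and $v'z$ (with $u'=v$, $y'\in W_v\setminus\set{z}$, $v'\in\bigl(N(z)\cap U\bigr)\setminus\set{v}$) satisfying $d(y'^+,z^+)\le2$, to which \cref{lem:localbondylemma} is applied a second time. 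So the ``bad'' configurations are not ruled out by a contradiction with the degree condition; they are escaped by switching to different edges, and that requires the whole chain of facts your sketch does not supply.

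Two further points. First, you take the path $Q$ in $G_3(u)-u$, whereas the paper takes it in $G_3(y^+)$ minus $y$ (or minus $u$); centering the ball at $y^+$ is what guarantees $z\in M_3(y^+)$ and the dichotomy ``$v\in M_3(y^+)$ or $uz\in E(G)$'' on which the subsequent degree-sum estimates depend, and your construction provides no such control (it only gives $z\in M_4(u)$). Relatedly, the paper takes $H$ to be the component of $G_7(u)-C$ containing $u$, so that not only $N(u)$ but also $N(v)$ and $N(s)$ are contained in $V(H)\cup V(C)$, which is used later; your ad hoc star-plus-path $H$ does not give this. Second, even in the favorable case, the conclusion of \cref{lem:localbondylemma} may delete one vertex of $C$, and a priori that vertex could be $y$; to conclude that $C'$ contains a neighbor of $u$ you need the observation that the deleted vertex's successor is adjacent to $z^+$ while $y^+z^+\notin E(G)$, so the deleted vertex is not $y$. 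Your handling of the degenerate case $z=y$ (reversing the orientation, extending $Q$) is also vaguer than necessary: choosing the path in $G_3(y^+)-y$ from $u$ to $y^+$, as the paper does when $\card{W_u}=1$, avoids $z=y$ by construction.
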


We will call such a cycle $C'$ a \emph{feasible cycle}.
Note that unless stated otherwise,
any feasible cycle that will be encountered will satisfy $V(C)\subset V(C')$;
this clearly implies that it contains a neighbor of~$u$.

\begin{proof}
Suppose that $N(u)\nsubseteq V(C)$ and that
there is no feasible cycle.
Let $H$ be the component of $G_7(u)-C$ containing~$u$
and let $U=V(H)$.
Then $N(u)\subseteq U\cup V(C)$.
Choose a direction $\dir C$ of~$C$.
Let $W_u=N(u)\cap V(C)$.
Clearly the set $W_u^+$ is independent and $N(w^+)\cap U=\emptyset$ for all $w\in W_u$,
since otherwise there would be a feasible cycle.

We shall find non-adjacent edges $uy$ and $vz$ such that
$u,v\in U$, $y,z\in V(C)$, and $z\in M_3(y^+)$ as follows:
\begin{itemize}
\item
If $\card{W_u}=1$, let $y$ be the unique vertex in~$W_u$.
This gives us the edge~$uy$.
Since $G_3(y^+)$ is 2-connected,
there is a $\pathto{u}{y^+}$-path~$P$ in $G_3(y^+)-y$,
and because $U$~is the vertex set of a component of $G_7(u)-C$,
there is a vertex~$z\in V(P)\cap V(C)$ such that
all other vertices of $u\dir P z$ are in~$U$.
Letting $v=z^-$ (with respect to $\dir P$),
we obtain the required edge $vz$.
\item
If $\card{W_u}\ge2$ but $N(w)\cap U=\set{u}$ for each vertex~$w\in W_u$,
then let $y$ be any vertex in $W_u$,
giving us the edge~$uy$.
Now we can, as above, find the edge $vz$ on
a path from any vertex in~$U\setminus\set{u}$ to $y^+$ in $G_3(y^+)-u$.
\item
Finally, if $\card{W_u}\ge2$ and $W_u$ contains vertices with neighbors in~$U\setminus\set{u}$,
we get the edges $uy$ and $vz$ by picking
$y$ and $z$ in~$W_u$ such that $z$ has a neighbor~$v$ in~$U\setminus\set{u}$.
\end{itemize}
Note that by the construction above,
\begin{equation}
\label{lem:localbondythm:eq:y+v-close-or-uz-adj}
\text{either $v\in M_3(y^+)$ or $u$ and $z$ are adjacent.}
\end{equation}
Thus $v\in N_5(u)$, which means that $N(v)\subseteq U\cup V(C)$.

Clearly $z\notin W_u^+$.
Also the set $W_u^+\cup\set{z^+}$ is independent and $N(z^+)\cap U=\emptyset$,
since otherwise there would be a feasible cycle.
If $d(y^+,z^+)=2$ then, by \cref{lem:localbondylemma},
there is a longer cycle~$C'$ where either $V(C)\subseteq V(C')$
or $V(C)\setminus V(C')=\set{w}$ for some $w$ with $w^+z^+\in E(G)$.
In both cases the vertex~$u$ has a neighbor on~$C'$
(in the second case this holds because $y^+z^+\notin E(G)$ so $y\ne w$, and hence $y\in V(C')$).
Furthermore $C'$ differs from $C$ only in $H\cup G_7(z)\subseteq G_{12}(u)$,
since $d(u,z)\le5$,
so it is a feasible cycle.
We can thus conclude that
\begin{equation}
\label{thm:localbondy:eq:d(y+-z+)>=3}
d(y^+,z^+)\ge3.
\end{equation}

Define $W_v=N(v)\cap V(C)$.
Clearly the set $W_u^+\cup W_v^+$ is independent
and $N(w^+)\cap U=\emptyset$ for all $w\in W_v$,
since otherwise there would be a feasible cycle.
It is also easy to see that
$N(y^+)\cap N(v)\subseteq W_v$
and $N(v)\cap N(w^+)\subseteq W_v$ for all $w\in W_v$,
as $N(v)\setminus V(C)\subseteq U$.
Now, if $u$ and $z$ are adjacent, then
$d(y^+)+d(z^+)+d(u)\le\card{M_3(u)}+\card{W_u}-\card{W_u^+\cup\set{u}}=\card{M_3(u)}-1$,
a contradiction.
Thus $v\in M_3(y^+)$ by \cref{lem:localbondythm:eq:y+v-close-or-uz-adj}.
Since, by~\cref{thm:localbondy:eq:d(y+-z+)>=3}, $N(y^+)\cap N(z^+)=\emptyset$, we obtain
\begin{equation}
\label{thm:localbondy:eq:d(y+-v)=3}
d(y^+,v)=3,
\end{equation}
because if $y^+\in M_2(v)$ then
$d(y^+)+d(z^+)+d(v)\le\card{M_3(v)}+\card{W_v}-\card{W_v^+\cup\set{v}}=\card{M_3(v)}-1$.
Also, since $W_v^+\subset M_3(z)$,
\begin{equation}
\label{thm:localbondy:eq:d(y+-z)=3}
d(y^+,z)=3,
\end{equation}
as if $y^+\in M_2(z)$ then
$d(y^+)+d(z^+)+d(v)\le\card{M_3(z)}+\card{W_v}-\card{W_v^+\cup\set{v}}=\card{M_3(z)}-1$.

By \cref{thm:localbondy:eq:d(y+-v)=3},
there is a vertex
\begin{equation}
\label{thm:localbondy:eq:defofs}
s\in N(v)\cap N_2(y^+).
\end{equation}
Clearly $s\in U$ or $s\in V(C)$.
We will show that $s\in U$.
Assume that $s\in V(C)$,
which means that $s\in W_v$ and $s^+\in W_v^+$.
Then $W_v^+\subset M_3(s)$ and $s^+y^+\notin E(G)$.
Thus $d(y^+,s^+)=2$, since if $d(y^+,s^+)\ge3$ then
$d(y^+)+d(s^+)+d(v)\le\card{M_3(s)}+\card{W_v}-\card{W_v^+\cup\set{v}}=\card{M_3(s)}-1$.
Also $d(s^+,z^+)=2$,
since if $d(s^+,z^+)\ge3$ then
$d(s^+)+d(z^+)+d(v)\le\card{M_3(v)}+\card{W_v}-\card{W_v^+\cup\set{v}}=\card{M_3(v)}-1$.
Now we have $z^+,y^+,v\in M_2(s^+)$,
$d(y^+,v)=3$ by~\cref{thm:localbondy:eq:d(y+-v)=3}, and
$d(y^+,z^+)\ge3$ by~\cref{thm:localbondy:eq:d(y+-z+)>=3}.
Then
$d(y^+)+d(z^+)+d(v)\le\card{M_3(s^+)}+\card{W_v}-\card{W_v^+\cup\set{v}}=\card{M_3(s^+)}-1$,
a contradiction.
Therefore we can conclude that $s\in U$.

Define $W_{s}=N(s)\cap V(C)$.
Clearly the set $W_u^+\cup W_v^+\cup W_{s}^+$ is independent
and $N(w^+)\cap U=\emptyset$ for all $w\in W_{s}$,
since otherwise there would be a feasible cycle.
Also $N(s)\subseteq U\cup V(C)$, because $d(u,s)\le4$.
Thus $N(w^+)\cap N(s)\subset W_s$ for all $w\in W_u\cup W_v\cup W_{s}$.
Now if $z^+\in M_2(s)$ then, since $y^+\in M_2(s)$ by \cref{thm:localbondy:eq:defofs}
and $d(y^+,z^+)\ge3$ by~\cref{thm:localbondy:eq:d(y+-z+)>=3},
$d(y^+)+d(z^+)+d(s)\le\card{M_3(s)}+\card{W_s}-\card{W_s^+\cup\set{s}}=\card{M_3(s)}-1$.
Thus
\begin{equation}
\label{thm:localbondy:eq:d(z+-s)=3}
d(z^+,s)=3.
\end{equation}
This and \cref{thm:localbondy:eq:defofs} imply that
\begin{equation}
\label{thm:localbondy:eq:d(z-s)=2}
d(z,s)=2.
\end{equation}
Furthermore,
\begin{equation}
\label{thm:localbondy:eq:d(y+-ws+)=3}
d(y^+,w^+)=2 \text{ for all } w\in W_{s}\setminus\set y,
\end{equation}
since if $d(y^+,w^+)\ge3$ for some $w\in W_{s}\setminus\set y$ then
$d(y^+)+d(w^+)+d(s)\le\card{M_3(s)}+\card{W_{s}}-\card{W_{s}^+\cup\set{s}}=\card{M_3(s)}-1$,
a contradiction.
Using the same argument, we get
\begin{equation}
\label{thm:localbondy:eq:d(ws1+-ws2+)=3}
d(w_1^+,w_2^+)=2 \text{ for all distinct } w_1,w_2\in W_{s}.
\end{equation}
This means that
\begin{equation}
\label{thm:localbondy:eq:zWs+intersection}
N(z)\cap W_{s}^+=\emptyset,
\end{equation}
since otherwise $zw^+\in E(G)$ for some $w\in W_{s}$,
so, by \cref{thm:localbondy:eq:d(y+-ws+)=3},
$d(y^+)+d(z^+)+d(s)\le\card{M_3(w^+)}+\card{W_{s}}-\card{W_{s}^+\cup\set{s}}=\card{M_3(w^+)}-1$.

We will now show that $\card{W_v}\ge2$.
Suppose that $\card{W_v}=1$, that is, $W_v=\set z$.
By \cref{thm:localbondy:eq:d(y+-z)=3} there is some $w\in N(z)\cap N_2(y^+)$.
But then this, \cref{thm:localbondy:eq:d(y+-z+)>=3}, and \cref{thm:localbondy:eq:d(y+-v)=3} imply
\begin{equation*}
d(y^+)+d(z^+)+d(v)\le\card{M_3(w)}+\card{W_v}-\card{\set{y^+,z^+,v}}=\card{M_3(w)}-2.
\end{equation*}
Thus $\card{W_v}\ge2$.
Also, $\card{N(z)\cap U}\ge2$, since otherwise
$N(z)\cap U=\set v$ and
$N(z)\cap N(s)\subseteq W_s\cup\set v$,
so by
\cref{thm:localbondy:eq:d(y+-z)=3,thm:localbondy:eq:defofs,thm:localbondy:eq:d(z-s)=2,thm:localbondy:eq:zWs+intersection}
we would have
$d(y^+)+d(z)+d(s)\le\card{M_3(s)}+\card{W_{s}\cup\set{v}}-\card{W_{s}^+\cup\set{z,s}}
	=\card{M_3(s)}-1$,
a contradiction.

Now $\card{W_v}\ge2$ and $\card{N(z)\cap U}\ge2$
imply that there exists vertices $y'\in W_v\setminus\set z$
and $v'\in \bigl(N(z)\cap U\bigr)\setminus\set v$.
For simplicity we shall also use the alias $u'=v$.
Then $d(y'^+,z^+)\le2$, as otherwise
$d(y'^+)+d(z^+)+d(v)\le\card{M_3(v)}+\card{W_v}-\card{W_v^+\cup\set{v}}=\card{M_3(v)}-1$,
a contradiction.
Finally, by applying \cref{lem:localbondylemma}
with $u'y'$ and $v'z$ instead of $uy$ and $vz$,
and with the same subgraph~$H$ and cycle~$C$,
we get, as earlier, a longer cycle~$C'$ where either $V(C)\subseteq V(C')$
or $V(C)\setminus V(C')=\set{w}$ for some $w$ with $w^+z^+\in E(G)$.
Again, in both cases the vertex~$u$ has a neighbor on~$C'$
(in the second case this holds because $y^+z^+\notin E(G)$ so $y\ne w$, and hence $y\in V(C')$),
and $C'$ differs from $C$ only in $H\cup G_7(z)\subseteq G_{12}(u)$,
so it is a feasible cycle.
The \lcnamecref{lem:localbondythm} follows.
\end{proof}

\begin{boldproof}[Proof of \cref{thm:localbondy}]
Let $C$ be a longest cycle in a graph $G$ satisfying the condition of \cref{thm:localbondy}.
Then, by \cref{lem:localbondythm}, $N(u)\subseteq V(C)$ for every vertex $u\in V(G)\setminus V(C)$. Therefore $C$ is a dominating cycle.
\end{boldproof}

Note that the diameter of a graph satisfying the conditions of \cref{oldthm:bondy}
does not exceed~5.
In contrast with this,
for any integers $n\geq 4$ and $p\geq 3$ there is a $(3p-1)$-regular graph
of diameter~$n$ satisfying the conditions of \cref{thm:localbondy}
(see, for example, the graph $G(p,2n)$ in \cref{remark:local-but-not-global-ore-graphs}).

In the next section we will use \cref{thm:localbondy} to find
local analogues of two well-known results.

Note that \Cref{thm:localbondy} is not a generalization of \cref{oldthm:bondy},
as there are 2-connected graphs satisfying the conditions of \cref{oldthm:bondy}
where some balls of radius~3 are not 2-connected.
An example of such a graph can be seen in \cref{fig:threeballnottwoconnected}.
However, by relaxing the requirement
that all balls of radius~3 are 2-connected,
one can obtain the following \lcnamecref{thm:localbondygeneral}
which is a generalization of \cref{oldthm:bondy,thm:localbondy}:

\begin{figure}
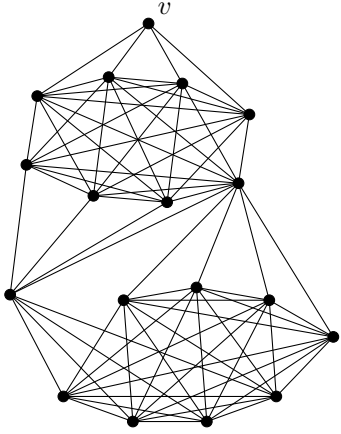

\begin{center}
\figthreeballnottwoconnected
\end{center}
\caption{A 2-connected graph~$G$ where the ball $G_3(v)$ is not 2-connected.}
\label{fig:threeballnottwoconnected}
\end{figure}

\begin{theorem}[\cite{granholm-dominatingcurve}]
\label{thm:localbondygeneral}
Let $G$ be a connected finite graph on at least three vertices such that
$d(x)+d(y)+d(z)\ge\card{M_3(v)}+2$
for every vertex $v\in V(G)$ and
for all triples of independent vertices $x,y,z$ of $G_2(v)$.
Furthermore assume that for every ball $G_3(v)$ of radius~3 that is not 2-connected,
and for every cut vertex $u$ in $G_3(v)$,
the following property holds:
$u\in N_2(v)$ and
for all $a,b\in N_3(v)$ where $a$ and $b$ are in different components of $G_3(v)-u$
there is a vertex $w\in N_4(v)$ such that $aw,bw\in E(G)$.
Then every longest cycle of $G$ is dominating.
\end{theorem}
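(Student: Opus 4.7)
The plan is to mirror the proof of \cref{thm:localbondy}, establishing analogues of \cref{lem:localbondylemma,lem:localbondythm} under the weakened hypotheses of \cref{thm:localbondygeneral}. Once these analogues are in place, \cref{thm:localbondygeneral} follows exactly as in the proof of \cref{thm:localbondy}: if $C$ is a longest cycle in $G$ and some $u \in V(G)\setminus V(C)$ has a neighbor off $C$, then the modified \cref{lem:localbondythm} produces a longer cycle, a contradiction, so $C$ is dominating.

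The proof of \cref{lem:localbondylemma} uses only the degree-sum condition on independent triples in $G_2(v)$, which is common to \cref{thm:localbondy,thm:localbondygeneral}; that lemma therefore transfers verbatim. The 2-connectedness hypothesis enters \cref{lem:localbondythm} only in the path-finding step, to obtain a $\pathto{u}{y^+}$-path in $G_3(y^+) - y$ when $\card{W_u}=1$, and a $\pathto{v'}{y^+}$-path in $G_3(y^+) - u$ for some $v' \in U \setminus \set{u}$ when $\card{W_u} \ge 2$ and $N(w) \cap U = \set{u}$ for every $w \in W_u$. Under the structural hypothesis of \cref{thm:localbondygeneral}, every cut vertex of a non-2-connected ball $G_3(y^+)$ lies in $N_2(y^+)$; since $y \in N_1(y^+)$, $y$ is never a cut vertex, so $G_3(y^+) - y$ is connected and the first path still exists. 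For the second path, if $u \notin N_2(y^+)$ (in particular, if $uy^+ \in E(G)$) then $u$ is not a cut vertex of $G_3(y^+)$ and the path exists in $G_3(y^+) - u$; otherwise $u \in N_2(y^+)$, and the bridge hypothesis supplies, for any two $N_3(y^+)$-vertices in different components of $G_3(y^+) - u$, a common neighbor $w \in N_4(y^+)$, so the path can be rerouted through $w$ into $G_4(y^+) - u$. Since $d(u,y^+) \le 2$, this rerouted path lies in $G_6(u) \subseteq G_7(u)$, so the vertex $v$ extracted as the predecessor on the path of the first vertex $z$ it shares with $C$ still satisfies $v \in U$ and $N(v) \subseteq U \cup V(C)$, as the remainder of the proof requires.

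The degree-sum manipulations in \cref{lem:localbondythm} and its final appeal to \cref{lem:localbondylemma} never invoke 2-connectedness, so they carry through unchanged, and the feasible cycle produced is still confined to $G_{12}(u)$. The main obstacle is the second path-finding case when $u$ is a cut vertex of $G_3(y^+)$: one must identify a vertex $v' \in N(u) \cap U$ and a bridge $w \in N_4(y^+)$ that together yield a path from $v'$ to $y^+$ in $G_4(y^+) - u$. This requires tracking which components of $G_3(y^+) - u$ contain $v'$ and $y^+$, and arguing that $v'$ (being in a component distinct from that of $y^+$) must already lie in $N_3(y^+)$, while the component of $y^+$ meets $N_3(y^+)$ as well, so that the bridge hypothesis applies; any residual degenerate configuration in which the component of $y^+$ fails to reach $N_3(y^+)$ must be disposed of by a direct ad hoc construction of the feasible cycle.
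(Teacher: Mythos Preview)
The paper does not contain its own proof of this theorem: \cref{thm:localbondygeneral} is cited from \cite{granholm-dominatingcurve}, a work listed as ``in preparation,'' and the paper only states the result. So there is no proof in the paper to compare your proposal against; I can only assess whether your plan is viable.

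Your overall strategy --- carry \cref{lem:localbondylemma} over verbatim and repair only the path-finding step in \cref{lem:localbondythm} --- is the natural one, and you correctly locate the single place where 2-connectedness of $G_3(y^+)$ is invoked. Your treatment of the first bullet ($\card{W_u}=1$) is sound: since any cut vertex of $G_3(y^+)$ lies in $N_2(y^+)$ while $y\in N_1(y^+)$, removing $y$ keeps $G_3(y^+)$ connected and the original argument goes through unchanged.

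The second bullet, however, is not yet handled. Two concrete problems remain. First, when you reroute the path through a bridge vertex $w\in N_4(y^+)$, the extracted pair $(v,z)$ need not satisfy the constraints the rest of the proof of \cref{lem:localbondythm} actually uses. If $w\notin V(C)$ and its neighbour $b\in N_3(y^+)$ in the $y^+$-component is the first $C$-vertex on the rerouted path, then $z=b\in M_3(y^+)$ but $v=w\in N_4(y^+)$; this violates \eqref{lem:localbondythm:eq:y+v-close-or-uz-adj} (you cannot have $uz\in E(G)$ here, since $z\in W_u$ would force $v=u$ by the second-bullet hypothesis), and the derivation of \eqref{thm:localbondy:eq:d(y+-v)=3} and the entire argument about $s\in N(v)\cap N_2(y^+)$ collapse. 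You verify $v\in U$ and $N(v)\subseteq U\cup V(C)$, but that is not enough: the proof needs $v\in M_3(y^+)$ to locate $s$ at distance~2 from $y^+$. Second, the ``residual degenerate configuration'' you flag --- where the component of $y^+$ in $G_3(y^+)-u$ contains no vertex of $N_3(y^+)$ --- is a real case (it occurs exactly when $u$ is the unique $N_2(y^+)$-neighbour of every vertex in $N_3(y^+)$), and in it the bridge hypothesis is useless for reaching $y^+$: it only links pairs in $N_3(y^+)$. Promising ``a direct ad~hoc construction'' is not a proof; you need either a different way to produce the edge $vz$ with $v\in M_3(y^+)$, or a separate argument that this configuration already forces a longer cycle.
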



\section{Local analogues of two well-known theorems}
\label{sec:kappa}

H\"aggkvist and Nicoghossian proved the following theorem:

\begin{theorem}[H\"aggkvist and Nicoghossian~\cite{haggkvist81}]
\label{oldthm:haggkvist}
Let $G$ be a 2-connected finite graph
such that $d(u)\geq \tfrac13\bigl(|V(G)|+\kappa(G)\bigr)$ for every vertex~$u\in V(G)$.
Then $G$ is Hamiltonian.
\end{theorem}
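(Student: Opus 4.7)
The plan is to argue by contradiction. Suppose $G$ is not Hamiltonian and let $C$ be a longest cycle in~$G$, with a fixed orientation $\dir C$. A classical theorem of Dirac guarantees that a longest cycle in a 2-connected graph has length at least $\min(\card{V(G)},2\delta(G))$, so the degree hypothesis gives $\card{V(C)}\ge\tfrac{2}{3}(\card{V(G)}+\kappa(G))$, whence
\[
\card{V(G)\setminus V(C)}\le\tfrac{1}{3}(\card{V(G)}-2\kappa(G)).
\]

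Next I would pick a component~$H$ of $G-C$, a vertex $u\in V(H)$, and the attachment set $A=N(V(H))\cap V(C)$. Applying Menger's theorem to~$u$ and any target vertex outside $V(H)$, the $\kappa(G)$-connectedness of~$G$ gives $\card A\ge\kappa(G)$. The next step is two standard rotation lemmas: (a)~the set of successors $A^+=\set{x^+:x\in A}$ is independent in~$G$, and (b)~no vertex of~$A^+$ has a neighbor in $V(H)$. In both situations, if the conclusion fails one can splice a path through~$H$ into~$C$—replacing one or two edges of~$C$ by a detour through~$H$—to obtain a cycle longer than~$C$, contradicting the choice of~$C$. This is where the bulk of the graph-theoretic work lies.

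With this structure in place, I would choose two distinct vertices $x,y\in A$ and consider the triple $u,x^+,y^+$, which is pairwise non-adjacent by (a) and (b). Straightforward counting gives
\[ d(u)\le\card{V(H)}-1+\card A\quad\text{and}\quad d(x^+),\,d(y^+)\le\card{V(G)}-\card{V(H)}-\card A, \]
the latter because $N(x^+)$ (and $N(y^+)$) must avoid $V(H)$ and the $\card A-1$ other vertices of $A^+$. Summing and comparing with the lower bound $d(u)+d(x^+)+d(y^+)\ge\card{V(G)}+\kappa(G)$ from the hypothesis, together with $\card A\ge\kappa(G)$ and the upper bound on $\card{V(H)}$ obtained in the first step, should force a numerical contradiction.

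The hard part will be the rotation lemma giving independence of $A^+$: it requires combining a hypothetical chord $x^+y^+$ with a path inside~$H$ from a neighbor of~$x$ to a neighbor of~$y$, so one needs $\card A\ge 2$ and some care in routing through~$H$. A second subtle point is the book-keeping when two consecutive vertices of~$C$ both lie in~$A$ (so $A\cap A^+\ne\emptyset$): here the bound on $d(x^+)$ uses $\card{A^+}=\card A$ and the disjointness $A^+\cap V(H)=\emptyset$, and one may need to choose $x,y\in A$ so that the forbidden sets for $x^+$ and $y^+$ remain as large as possible. Once these two obstacles are handled, the arithmetic falls out cleanly.
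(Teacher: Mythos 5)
Your structural ingredients are all correct and standard: the fan argument giving $\card A\ge\kappa(G)$ for the attachment set $A=N(V(H))\cap V(C)$, the rotation lemmas showing that $A^+$ is independent and has no neighbors in $V(H)$, and the degree bounds $d(u)\le\card{V(H)}-1+\card A$ and $d(x^+),d(y^+)\le\card{V(G)}-\card{V(H)}-\card A$. The gap is the final step: the promised ``numerical contradiction'' does not occur. Summing your three bounds gives $d(u)+d(x^+)+d(y^+)\le 2\card{V(G)}-\card{V(H)}-\card A-1$, and comparing with the hypothesis $d(u)+d(x^+)+d(y^+)\ge\card{V(G)}+\kappa(G)$ yields only $\card{V(H)}+\card A\le\card{V(G)}-\kappa(G)-1$, which is an upper bound on $\card{V(H)}$ and $\card A$, not a lower bound. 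Your first step (Dirac's circumference bound) also gives an upper bound, $\card{V(H)}\le\card{V(G)\setminus V(C)}\le\tfrac13(\card{V(G)}-2\kappa(G))$, so the two conclusions point in the same direction and are perfectly consistent — for instance a single off-cycle vertex with exactly $\kappa(G)$ attachments on a long cycle satisfies every inequality you derive. So no contradiction is forced, and the proof does not close.

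This is not a book-keeping issue but a missing idea: the crude ``independent triple plus global neighborhood-exclusion'' count is exactly the strength of Bondy-type arguments and at best shows that a longest cycle is dominating; it cannot by itself use the term $\kappa(G)$ to force Hamiltonicity. The known proofs (Häggkvist–Nicoghossian's original one, and Wei's proof of the stronger theorem of Bauer et al.\ that $d(x)+d(y)+d(z)\ge\card{V(G)}+\kappa(G)$ for independent triples suffices) first reduce to a dominating longest cycle, so that the off-cycle vertex $v$ has all neighbors on $C$, and then carry out a much finer analysis: segment-by-segment counting of $N(y^+)$ and $N(z^+)$ between consecutive attachment vertices, together with an argument built on a minimum vertex cut of the graph (or, in this paper, of a ball $G_3(y_0^+)$) and the distribution of $W^+$ among its components. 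You can see the shape of that work in the paper's proof of its local analogue (\cref{thm:localkappa}), which follows Wei; note also that the paper itself does not prove \cref{oldthm:haggkvist} but cites it. Your outline stops precisely where that harder part of the argument begins, so as it stands the proposal is incomplete.
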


Later Bauer, Broersma, Veldman, and Rao
generalized their result as follows:

\begin{theorem}[Bauer et~al.~\cite{bauer89}]
\label{oldthm:kappa}
Let $G$ be a 2-connected finite graph
such that $d(x)+d(y)+d(z)\geq |V(G)|+\kappa(G)$ for every triple of
independent vertices $x, y, z$ of $G$.
Then $G$ is Hamiltonian.
\end{theorem}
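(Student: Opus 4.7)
The plan is to reduce the statement to showing that a longest cycle of $G$ is Hamiltonian. Since $\kappa(G)\ge 2$, the hypothesis implies $d(x)+d(y)+d(z)\ge\card{V(G)}+2$ for every triple of independent vertices, so by \cref{oldthm:bondy} every longest cycle of $G$ is dominating.

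Let $C$ be a longest cycle and suppose for contradiction that $C$ is not Hamiltonian. Then $V(G)\setminus V(C)$ is nonempty and, as the complement of a dominating cycle, independent; in particular every component of $G-C$ consists of a single vertex. Fix an orientation $\dir{C}$ of $C$, pick $u\in V(G)\setminus V(C)$ maximizing $d(u)$, and set $W=N(u)\subseteq V(C)$. Since $u$ has no neighbor in $V(G)\setminus V(C)$, the fan version of Menger's theorem applied to $u$ and $V(C)$ produces $\kappa(G)$ internally disjoint paths from $u$ to distinct vertices of $V(C)$, each necessarily a single edge; hence $d(u)=\card{W}\ge\kappa(G)$.

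Next I would run the standard P\'osa-type rotation arguments to show that $W^+\cup\set{u}$ is independent. If $x^+y^+\in E(G)$ for distinct $x,y\in W$ then $xuy\revdir{C}x^+y^+\dir{C}x$ is a cycle on $V(C)\cup\set{u}$ longer than $C$; and if $ux^+\in E(G)$ then $x^+\in W$, giving two consecutive neighbors of $u$ on $C$ and hence the longer cycle $ux\revdir{C}x^+u$. In particular $W\cap W^+=\emptyset$, so $\card{V(C)}\ge 2d(u)\ge 2\kappa(G)$. Now I would pick distinct $x,y\in W$ and apply the degree hypothesis to the independent triple $\set{u,x^+,y^+}$ to get
\begin{equation*}
d(x^+)+d(y^+)\ge\card{V(G)}+\kappa(G)-d(u).
\end{equation*}
The remaining task is to bound $d(x^+)+d(y^+)$ from above tightly enough to contradict this. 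The crucial combinatorial step is a crossing lemma: for any $w$ on the arc $y^+\dir{C}x^-$ with $w\in N(x^+)$ and $w^+\in N(y^+)$, the edges $ux$, $uy$, $wx^+$, $w^+y^+$ can be spliced with three arcs of $C$ into a cycle on $V(C)\cup\set{u}$, contradicting the maximality of $C$. Applying this forbidden-configuration bound on both arcs of $V(C)\setminus\set{x,y}$, together with the $\kappa(G)$ extra neighbors on $V(C)$ guaranteed by the internally disjoint paths from $u$, should yield an upper bound on $d(x^+)+d(y^+)$ that sharpens the trivial bound $2(\card{V(G)}-d(u))$ by an extra $\kappa(G)$ term, giving the desired contradiction.

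The main obstacle is this last step: assembling the crossing-lemma bookkeeping and the Menger-based refinement so that the improvement over the naive bound amounts to precisely $\kappa(G)$. The other ingredients --- the reduction via \cref{oldthm:bondy}, the rotation argument showing that $W^+\cup\set{u}$ is independent, and the lower bound $d(u)\ge\kappa(G)$ --- are routine, and one may need to replace $u$ by a judicious choice of pair $(x,y)\in W\times W$ (for example minimizing the length of the arc $x\dir{C}y$) to make the counting come out.
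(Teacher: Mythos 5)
There is a genuine gap, and it sits exactly where you flag it yourself: you have no actual mechanism for making $\kappa(G)$ appear in the upper bound on $d(x^+)+d(y^+)$. The ingredients you do supply only reproduce the Ore-type bookkeeping: the crossing lemma on the two arcs gives (together with the fact that $x^+$ and $y^+$ share no neighbour off $C$ and are non-adjacent to $u$) essentially $d(x^+)+d(y^+)\le\card{V(G)}-1$, and the Menger/fan argument gives only $d(u)=\card{W}\ge\kappa(G)$, which is the trivial bound $\delta(G)\ge\kappa(G)$ and is already consumed in your lower bound. Combining these yields $d(x^+)+d(y^+)+d(u)\le\card{V(G)}-1+d(u)$, which contradicts the hypothesis only when $d(u)\le\kappa(G)$; the case $d(u)\ge\kappa(G)+1$ is untouched, and that is precisely the hard case of \cref{oldthm:kappa}. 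The internally disjoint paths from $u$ cannot supply ``$\kappa(G)$ extra neighbours on $V(C)$'' beyond this, because $N(u)\subseteq V(C)$ forces every such path to begin with an edge into $W$ itself, so they certify nothing new. No amount of re-choosing the pair $(x,y)$ on $W$ fixes this: the inequality $d(x^+)+d(y^+)\le\card{V(G)}-1$ is essentially tight without further structural input.

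The known proofs (the original one in \cite{bauer89}, Wei's short proof \cite{wei99}, and the adaptation of Wei's argument that this paper gives for the local analogue, \cref{thm:localkappa}) get the missing $\kappa(G)$ from a completely different source: one first shows $\card{W}\ge\kappa(G)+1$, then takes a \emph{minimum vertex cut} $V_0$ with $\card{V_0}=\kappa(G)$ and analyses how $W^+$, the vertex $u$, and the neighbourhoods of the relevant successors distribute over the components of $G-V_0$, with a case analysis on the number of components meeting the relevant sets. That cut-set/component analysis is the heart of the theorem and is entirely absent from your sketch, so the proposal as written does not close. (Note also that the paper itself does not prove \cref{oldthm:kappa}; it cites it, pointing to \cite{wei99}, and proves the local analogue \cref{thm:localkappa} by that cut-set method. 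Your correct preliminary steps --- the reduction via \cref{oldthm:bondy}, the rotation argument making $W^+\cup\set{u}$ independent, and the crossing lemma --- all reappear there, but they are the routine part.)
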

A short proof of \cref{oldthm:kappa} was obtained by Wei~\cite{wei99}.
We will provide local analogues of \cref{oldthm:haggkvist} and \cref{oldthm:kappa}.

In section 4 we proved that \cref{prop:equivbondy} is equivalent to \cref{oldthm:bondy}.
Using a similar argument one can prove that
\cref{oldthm:kappa} is equivalent to the following proposition:
A connected finite graph $G$ on at least three vertices is Hamiltonian
if every ball of radius~4 in~$G$ is $2$-connected
and $d(x)+d(y)+d(z)\geq\card{M_4(v)}+\kappa\bigl(G_4(v)\bigr)$
for every vertex $v\in V(G)$
and for all triples of independent interior vertices $x,y,z$ of $G_4(v)$.
(For more details, see \cite[Prop.~3.3]{asratian07} where a similar result is proved.)

We will prove the following local analogue of \cref{oldthm:kappa},
using an argument similar to the one in~\cite{wei99}:

\begin{theorem}
\label{thm:localkappa}
Let $G$ be a connected finite graph on at least three vertices
such that every ball of radius~$3$ in~$G$ is 2-connected and
$d(x)+d(y)+d(z)\ge\card{M_3(v)}+\kappa\bigl(G_3(v)\bigr)$
for every vertex $v\in V(G)$
and for all triples of independent interior vertices $x,y,z$ of $G_3(v)$.
Then $G$ is Hamiltonian.
\end{theorem}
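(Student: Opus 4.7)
The plan is to adapt Wei's \cite{wei99} short proof of \cref{oldthm:kappa} to the ball $G_3(u)$. Since $\kappa\bigl(G_3(v)\bigr)\geq 2$ for every $v\in V(G)$, the hypothesis of the theorem is strictly stronger than that of \cref{thm:localbondy}, so every longest cycle of $G$ is dominating. Assume for contradiction that $G$ is not Hamiltonian and let $C$ be a longest cycle. Choose $u\in V(G)\setminus V(C)$; since $C$ dominates, $N(u)\subseteq V(C)$, and since $G_3(u)$ is $2$-connected, $d(u)\geq 2$.

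Fix an orientation $\dir C$ of $C$ and list $X=N(u)=\set{x_1,\dotsc,x_s}$ in cyclic order along $\dir C$; set $X^+=\set{x_1^+,\dotsc,x_s^+}$. Cycle-rotation arguments that use only the maximality of $C$ show that $\set u\cup X^+$ is an independent set: an edge $x_i^+x_j^+$ produces the longer cycle $ux_i\revdir Cx_j^+x_i^+\dir Cx_ju$, and an edge $ux_i^+$ forces $x_i^+=x_k\in X$ and yields the longer cycle $ux_k\dir Cx_iu$. Because $\set u\cup X^+\subseteq M_2(u)$, every vertex of this set is an interior vertex of $G_3(u)$; hence for any $i\neq j$, the triple $\set{u,x_i^+,x_j^+}$ is an independent triple of interior vertices, and the hypothesis gives
\[
d(u)+d(x_i^+)+d(x_j^+)\geq\card{M_3(u)}+\kappa\bigl(G_3(u)\bigr).
\]

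We aim to contradict this via a Wei-style count. Writing $\kappa_0=\kappa\bigl(G_3(u)\bigr)$, the three neighborhoods $N(u)$, $N(x_i^+)$, $N(x_j^+)$ all lie in $M_3(u)$ (because $\set{u,x_i^+,x_j^+}\subseteq M_2(u)$) and are disjoint from the independent triple $\set{u,x_i^+,x_j^+}$. Using the $\kappa_0$-connectivity of $G_3(u)$ as in \cite{wei99}, one chooses $i\neq j$ so that $x_i,x_j$ are appropriately spread along $C$; then repeated cycle-rotation arguments produce injective maps from each pairwise intersection $N(\cdot)\cap N(\cdot)$ into a controlled subset of $V(C)$, forcing
\[
d(u)+d(x_i^+)+d(x_j^+)<\card{M_3(u)}+\kappa_0
\]
and contradicting the inequality above.

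The main obstacle is executing the choice of $i,j$ and the intersection bound. The advantage of the local setting is that all cycle-rotation manipulations in Wei's proof depend only on $C$ being longest in $G$---not on any property of $G_3(u)$---while the degrees being counted automatically record only edges inside $M_3(u)$, since $\set{u,x_i^+,x_j^+}\subseteq M_2(u)$. Hence Wei's combinatorial structure transfers essentially unchanged, with $\card{V(G)}$ replaced by $\card{M_3(u)}$ and $\kappa(G)$ by $\kappa\bigl(G_3(u)\bigr)$, yielding the desired contradiction and proving that $G$ is Hamiltonian.
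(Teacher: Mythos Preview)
Your setup through the independence of $\{u\}\cup X^+\subseteq M_2(u)$ and the invocation of \cref{thm:localbondy} is correct, but the claim that ``Wei's combinatorial structure transfers essentially unchanged'' is exactly where the argument breaks down. The paper's own proof occupies several pages of case analysis precisely because the transfer is \emph{not} routine.

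The concrete obstruction is the interior-vertex clause in the hypothesis. One application at the center $v=u$ with the triple $\{u,x_i^+,x_j^+\}$ is fine, but Wei's method then passes to a minimum vertex cut $V_0$ and selects vertices from distinct components to form further independent triples. In the local version the cut is of $G_3(u)$, and a component $V_j$ of $G_3(u)-V_0$ may meet $M_3(u)$ only in boundary vertices $N_3(u)$; a triple containing such a vertex cannot be fed into the hypothesis at $v=u$. You also do not yet know that $d(u)\ge\kappa\bigl(G_3(u)\bigr)+1$, which in Wei's proof is immediate from $\delta(G)\ge\kappa(G)$ but here requires separate work. The paper handles this by first proving $d(y^+,z^+)=2$ for all $y,z\in X$ and $X\subseteq M_3^\circ(y^+)$ for every $y\in X$, then \emph{re-centering} the ball at a vertex $y_0^+\in X^+$ minimizing $\kappa\bigl(G_3(y_0^+)\bigr)$, taking the cut there, and intersecting each component with the interior to form sets $U_j$. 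A multi-case analysis on how many $U_j$ are nonempty (four or more, exactly three with two subcases, exactly two) is then needed, and in several branches the hypothesis is applied at yet other centers such as $x^+$, $z^+$, $w$, or $a$. None of these moves are present in Wei's global argument, and your proposal neither identifies the need for them nor carries them out.
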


\begin{proof}
Let $G$ be a graph satisfying the conditions of \cref{thm:localkappa},
and let $C$ be a longest cycle of~$G$.
Assume that $C$ is not a Hamilton cycle.
By \cref{thm:localbondy}, $C$~is dominating.
Choose a direction $\dir C$ of~$C$.
Pick $v\in V(G-C)$.
Since $C$ is dominating, $N(v)\subseteq V(C)$.
Let $W=N(v)$ and let $x_1,\dotsc,x_t$ be the vertices of $W$ in order around~$C$.
Let $C_i=x_i^+\dir C x_{i+1}$ for $i=1,\dotsc,t-1$ and $C_t=x_t^+\dir C x_1$.
For easier notation, let $\kappa_3(w)$ denote $\kappa\bigl(G_3(w)\bigr)$
for any vertex~$w$.
Also, we will let $M_3\open(w)$ denote the set of interior vertices of $G_3(w)$.

Since $C$ is a longest cycle,
the set $W^+\cup\set v$ is independent
and $N(y^+)\cap N(z^+)\subseteq V(C)$ for any vertices $y,z\in W$.
Furthermore
\begin{equation}
\label{thm:localkappa:eq:d(w1+-w2+)=2}
d(y^+,z^+)=2 \text{ for all distinct } y,z\in W,
\end{equation}
since if $d(y^+,z^+)\ge3$ for some $y,z\in W$ then
$d(y^+)+d(z^+)+d(v)\le\card{M_3(v)}+\card{W}-\card{W^+\cup\set{v}}=\card{M_3(v)}-1$.
Also
\begin{equation}
\label{thm:localkappa:eq:Winternal}
x\in M_3\open(y^+) \text{ for all $x,y\in W$.}
\end{equation}
since otherwise for some $x,y\in W$ we can pick $w\in N(x)\setminus M_3(y^+)$,
and using \cref{thm:localkappa:eq:d(w1+-w2+)=2} we obtain that
$d(w,z^+)\ge2$ for every $z\in W$ and therefore
$d(y^+)+d(v)+d(w)\le\card{M_3(x^+)}+\card{W}-\card{W^+\cup\set{v,w}}=\card{M_3(x^+)}-2$.

Let $y$ and $z$ be arbitrary vertices in $W$.
Since $y^+$ and $z^+$ have no common neighbors outside of $C$
and furthermore are not adjacent to $v$, we get
\begin{equation}
\label{thm:kappa:eq:orig(3)}
\card{N(y^+)\setminus V(C)}+\card{N(z^+)\setminus V(C)}
\le\card{M_3(y^+)\setminus V(C)}-1.
\end{equation}
We will show now that for each $i=1,\dotsc,t$ and for every $s\in V(C_i)$
the following inequality holds:
\begin{equation}
\label{thm:kappa:eq:orig(1)help}
\card{N(y^+)\cap N(z^+)\cap V(x_i^+\dir C s)}
\le \card[\big]{M_3(y^+)\cap V(x_i^+\dir C s)
	\setminus\bigl(N(y^+)\cup N(z^+)\bigr)},
\end{equation}
by pairing each vertex~$w$ on $x_i^+\dir Cs$ that is adjacent to both $y^+$ and $z^+$
with a vertex~$q\in M_3(y^+)$ that is adjacent to neither.
We consider only the case when
$N(y^+)\cap N(z^+)\cap V(x_i^+\dir C s)\ne\emptyset$,
because otherwise \cref{thm:kappa:eq:orig(1)help} is evident.

Clearly, $x_i^+\dir C s$ is a part of either $y^+\dir Cz$ or $z^+\dir Cy$.
Assume first that it is a part of $y^+\dir Cz$.
Then for each $b\in N(y^+)\cap V(x_i^+\dir Cs)$ we have $b^-\notin N(z^+)$
since otherwise there would be a longer cycle.
Now, for each $w\in N(y^+)\cap N(z^+)$ that lies on $x_i^+\dir Cs$
we “step backwards” through $w^-,w^{--}$ etc.
to find the first vertex $q\notin N(y^+)$
and pair it with~$w$.
Clearly, $V(q^+\dir C w^-)\subset N(y^+)\setminus N(z^+)$,
which means that $q\notin N(z^+)$
and that no other vertex $w'\in N(y^+)\cap N(z^+)\cap V(x_i^+\dir Cs)$ can be paired with~$q$.
Also, $q\in V(x_i^+\dir Cs)$, because $x_i^+\notin N(y^+)$.
Clearly this pairing defines an injection from
the set $N(y^+)\cap N(z^+)\cap V(x_i^+\dir C s)$
to the set $M_3(y^+)\cap V(x_i^+\dir C s) \setminus\bigl(N(y^+)\cup N(z^+)\bigr)$.
Therefore \cref{thm:kappa:eq:orig(1)help} holds in this case.

Assume now that $x_i^+\dir Cs$ is a part of $z^+\dir Cy$.
Clearly, for each $b\in N(y^+)\cap V(z^+\dir Cy)$ we have $b^+\notin N(z^+)$,
since otherwise there would be a longer cycle.
Thus for any vertex $w\in N(y^+)\cap N(z^+)$ that lies on $x_i^+\dir Cs$
we can “step forwards” through $w^+,w^{++}$ etc.
to find the first vertex $q\notin N(y^+)$ and pair it with~$w$.
Let $w_1,\dotsc,w_p$ be the vertices of $N(y^+)\cap N(z^+)\cap V(x_i^+\dir Cs)$
in order along $\dir C$,
and let $q_1,\dotsc,q_p$ be the vertices paired with $w_1,\dotsc,w_p$, respectively.
If $p\ge2$ then $q_k$ lies on $w_k\dir Cw_{k+1}$ and, therefore, on $x_i^+\dir Cs$, for $k=1,\dotsc,p-1$.
However,
$q_p$ can be outside of $x_i^+\dir Cs$.
(This happens only if $y^+$ is adjacent to every vertex on $w_p\dir C s$.)
Note that $x_i^+\notin N(y^+)\cup N(z^+)$,
but, since we are stepping forwards,
$x_i^+$ will not be paired with any vertex~$w_k$, $1\leq k\leq p$.
Therefore, if $q_p\notin V(x_i^+\dir Cs)$ we can pair $w_p$ with $x_i^+$, that is, put $q_p=x_i^+$.
Now $q_1,\dotsc,q_p\in V(x_i^+\dir Cs)$
and our pairing process defines an injection from
the set $N(y^+)\cap N(z^+)\cap V(x_i^+\dir C s)$
to the set $M_3(y^+)\cap V(x_i^+\dir C s) \setminus\bigl(N(y^+)\cup N(z^+)\bigr)$,
which means that \cref{thm:kappa:eq:orig(1)help} holds in this case as well.

By adding $\card[\big]{\bigl(N(y^+)\cup N(z^+)\bigr)\cap V(x_i^+\dir Cs)}$
to both sides of \cref{thm:kappa:eq:orig(1)help} we get the following inequality:
\begin{equation}
\label{thm:kappa:eq:orig(1)}
\card{N(y^+)\cap V(x_i^+\dir Cs)}+\card{N(z^+)\cap V(x_i^+\dir Cs)}
	\le\card{M_3(y^+)\cap V(x_i^+\dir Cs)}.
\end{equation}
Finally, by letting $s=x_{i+1}$ (or $s=x_1$ if $i=t$) and summing over $i$ we obtain
\begin{equation}
\label{thm:kappa:eq:orig(2)}
\card{N(y^+)\cap V(C)}+\card{N(z^+)\cap V(C)}\le\card{M_3(y^+)\cap V(C)}.
\end{equation}

Pick $y_0\in W$ such that
\begin{equation}
\kappa_3(y_0^+)\le \kappa_3(z^+) \text{ for all } z\in W.
\end{equation}
For brevity we will use the following notation:
\begin{equation}
\begin{aligned}
\kappa_3=\kappa_3(y_0^+),\ 
&G_3=G_3(y_0^+),\\
M_3=M_3(y_0^+),\ 
&M_3\open=M_3\open(y_0^+).
\end{aligned}
\end{equation}
We shall prove that $\card W\ge\kappa_3+1$.
Clearly $\card W=\card{N(v)}\ge\kappa_3$ since $v\in M_3$.
Assume that $\card W=\kappa_3$, that is, $d(v)=\kappa_3$.
This means that $\card{W}\ge2$,
since $G_3(y_0^+)$ is 2-connected, that is, $\kappa_3\ge2$.
Then, choosing two vertices $y,z\in W$ we obtain,
by \cref{thm:kappa:eq:orig(2),thm:kappa:eq:orig(3)}, that
$d(y^+)+d(z^+)+d(v)\le\card{M_3(y^+)}-1+d(v)
	=\card{M_3(y^+)}+\kappa_3-1\le\card{M_3(y^+)}+\kappa_3(y^+)-1$,
a contradiction.
Thus $\card W\ge\kappa_3+1$.

Let $V_0$ be a vertex cut set of $G_3$ with $\card{V_0}=\kappa_3$,
and let $V_1,\dotsc,V_p$ be the vertex sets of the components of $G_3-V_0$.
Set $U_i=V_i\cap M_3\open$ and $Y_i=V_i\cap W^+$, $i=0,\dotsc,p$.
By \cref{thm:localkappa:eq:d(w1+-w2+)=2},
$W^+\subset M_3\open$.
Therefore $Y_i\subseteq U_i\subseteq V_i$.
It is clear that $U_0\ne\emptyset$.
Since $\card{V_0}=\kappa_3<\card{W^+}$,
we may assume that $Y_1\ne\emptyset$, so $U_1\ne\emptyset$.
Furthermore assume that the sets $V_i$ are ordered such that
the nonempty $U_i$ have the lowest indices,
that is, if $U_i=\emptyset$ then $U_j=\emptyset$ for all $j>i$.

\parenum
Suppose that there are at least four nonempty sets~$U_i$,
that is, $U_i\ne\emptyset$ for $i=0,1,2,3$.
For each $i=1,2,3$, pick $u_i\in U_i$ such that $u_i\in Y_i$ if $Y_i\ne\emptyset$.
Then $N(u_i)\cap Y_i=\emptyset$
and furthermore $N(u_1)\cap Y_0=\emptyset$ since $Y_1\ne\emptyset$,
so we obtain
\begin{multline}
\sum_{i=1}^3 d(u_i)
\le\sum_{i=1}^3\card{V_i\setminus Y_i}+3\card{V_0}-\card{Y_0}
\le\card{M_3\setminus W^+}+2\card{V_0}\\
=\card{M_3\setminus W^+}+2\kappa_3
\le\card{M_3}+\kappa_3-1,
\end{multline}
a contradiction.
Thus $U_3=\emptyset$.

\parenum
Suppose that there are three nonempty sets~$U_i$,
that is, $U_0$, $U_1$, and~$U_2$ are nonempty.
We consider two cases.
\begin{case}
$U_2\nsubseteq W\cup\set v$.
\end{case}
Pick $u_1\in Y_1$,
and pick $u_2\in Y_2$ if $Y_2\ne\emptyset$,
otherwise pick $u_2\in U_1\setminus(W\cup\set{v})$.
Again $N(u_i)\cap Y_i=\emptyset$
and $N(u_1)\cap Y_0=\emptyset$,
and furthermore $v\notin N(u_1)\cup N(u_2)$.
We obtain
\begin{multline}
d(u_1)+d(u_2)
\le\sum_{i=1}^2\card{V_i\setminus Y_i}+2\card{V_0}-\card{Y_0\cup\set v}
\le\card{M_3\setminus W^+}+\card{V_0}-1\\
=\card{M_3\setminus W^+}+\kappa_3-1
\le\card{M_3}+\kappa_3-\card{W^+}-1.
\end{multline}
Thus $d(u_1)+d(u_2)+d(v)\le\card{M_3}+\kappa_3-1$, a contradiction.
\begin{case}
\label{thm:kappa:case:U2inWandv}
$U_2\subseteq W\cup\set v$.
\end{case}
In this case $Y_2=\emptyset$,
so $v\notin V_2$ as otherwise
$(W\setminus V_2)\cup\defset{x^+}{x\in W\cap V_2}\subseteq V_0$,
implying that $\card{V_0}\ge\card{W}\ge\kappa_3+1$,
a contradiction
(note that $W\cap W^+=\emptyset$, so the union is disjoint).
We check two subcases:

\begin{subcase}
$U_2=V_2$.
\end{subcase}
In this case $V_2\subseteq W=N(v)$ and $v\in V_0$,
so $\card{V_0\setminus\set v}=\kappa_3-1\le\card{W^+}-2$ and thus $\card{Y_1}\ge2$.
Let $y^+,z^+\in Y_1$.
For each $x_i\in V_2$,
using \cref{thm:kappa:eq:orig(1)} with $s=x_i^-$ we obtain
\begin{equation}
\label{thm:kappa:eq:orig(1)V2}
\card{N(y^+)\cap V(x_{i-1}^+\dir Cx_i^-)}+\card{N(z^+)\cap V(x_{i-1}^+\dir Cx_i^-)}
	\le\card{M_3(y^+)\cap V(C_{i-1})}-1.
\end{equation}
Similarly for $x_i\in W\setminus V_2$,
using \cref{thm:kappa:eq:orig(1)} with $s=x_i$ we obtain
\begin{equation}
\label{thm:kappa:eq:orig(1)notV2}
\card{N(y^+)\cap V(x_{i-1}^+\dir Cx_i)}+\card{N(z^+)\cap V(x_{i-1}^+\dir Cx_i)}
	\le\card{M_3(y^+)\cap V(C_{i-1})}.
\end{equation}
Thus, by \cref{thm:kappa:eq:orig(3),thm:kappa:eq:orig(1)V2,thm:kappa:eq:orig(1)notV2},
$d(y^+)+d(z^+)\le\card{M_3(y^+)}-\card{V_2}-1$.
Furthermore $d(x)\le\card{V_2}-1+\card{V_0}$ for any $x\in V_2$
since $x\in M_3\open$.
Thus
$d(y^+)+d(z^+)+d(x)
\le\card{M_3(y^+)}+\kappa_3-2
\le\card{M_3(y^+)}+\kappa_3(y^+)-2$,
a contradiction.

\begin{subcase}
$V_2\setminus U_2\ne\emptyset$.
\end{subcase}
Pick $u_1\in Y_1$,
and furthermore pick two vertices $x\in U_2$ and $u_2\in V_2\setminus U_2$
such that $u_2\in N(x)$.
Clearly $x\in W$ by the assumption of \cref{thm:kappa:case:U2inWandv}
and the fact that $v\notin U_2$.
Also, note that $W^+\cup\set{v}\subset V_0\cup V_1$.
Now, since $u_2\in N(x)\subset M_2(x^+)$,
$u_1\in Y_1\subseteq W^+$, and,
by~\cref{thm:localkappa:eq:d(w1+-w2+)=2}, $W^+\subset M_2(x^+)$,
\begin{equation}
\begin{aligned}
d(u_1)+d(u_2)
&\le\card[\big]{\bigl((V_0\cup V_1)\setminus(W^+\cup\set{v})\bigr)\cap M_3(x^+)}
	+\card{M_3(x^+)\setminus V_1}\\
&=\card{M_3(x^+)\setminus(W^+\cup\set{v})}+\card{V_0\cap M_3(x^+)}\\
&\le\card{M_3(x^+)}-\card{W^+}-1+\kappa_3.
\end{aligned}
\end{equation}
Thus $d(u_1)+d(u_2)+d(v)\le\card{M_3(x^+)}+\kappa_3-1\le\card{M_3(x^+)}+\kappa_3(x^+)-1$, a contradiction.
%
We can conclude that $U_2=\emptyset$.

\medskip
Thus there are only two nonempty sets $U_i$, that is $M_3\open=U_0\cup U_1$
(recall that $U_0$ and $U_1$ must be nonempty).
Since $V_0$ is a minimal cut set,
every vertex of $V_0$ has a neighbor in each of the sets $V_1,\dotsc,V_p$.
Also, clearly $p\ge2$.
Now $M_2(y_0^+)\subseteq M_3\open\subseteq V_0\cup V_1$,
so $y_0^+\in V_1$ and $N(y_0^+)\subseteq V_1$.
Also $v\in V_1$, since $V_0$ is minimal and
$N(v)=W\subseteq{M_3\open}\subseteq V_0\cup V_1$ by \cref{thm:localkappa:eq:Winternal}.
If $z^+\notin V_1$ for some $z\in W$, then $z^+\in V_0$
because, by~\cref{thm:localkappa:eq:d(w1+-w2+)=2}, $z^+\in M_2(y_0^+)$.
In that case, pick $u\in N(z^+)\cap V_2$.
Then
\begin{equation}
\label{thm:localkappa:eq:W+inV1}
\begin{aligned}
d(y_0^+)+d(u)
&\le\card[\big]{\bigl(V_1\setminus(W^+\cup\set{v})\bigr)\cap M_3(z^+)}
	+\card{M_3(z^+)\setminus V_1}\\
&=\card{M_3(z^+)\setminus(W^+\cup\set{v})}+\card{V_0\cap W^+}\\
&\le\card{M_3(z^+)}-\card{W^+}-1+\kappa_3.
\end{aligned}
\end{equation}
so $d(y_0^+)+d(u)+d(v)\le\card{M_3(z^+)}+\kappa_3-1\le\card{M_3(z^+)}+\kappa_3(z^+)-1$, a contradiction.
Thus $W^+\subseteq V_1$.
In the same way we can show that $N(z^+)\subset V_1$ for all $z\in W$:
Assume there is an $a\in N(z^+)\setminus V_1$ for some $z\in W$.
Then $a\in V_0$ since $N(z^+)\subset M_3$.
Pick $u\in N(a)\cap V_2$.
The same calculations as in \cref{thm:localkappa:eq:W+inV1} show again that
$d(y_0^+)+d(u)+d(v)\le\card{M_3(z^+)}+\kappa_3(z^+)-1$, a contradiction.
Thus $N(z^+)\subset V_1$ for all $z\in W$.
In particular, $W\subset V_1$.

Pick $u\in V_2$.
Then $d(y_0^+,u)=3$ since $u\in M_3\setminus M_3\open$.
Pick $w\in N(y_0^+)$ and $a\in N(u)\cap V_0$ so that $wa\in E(G)$,
that is, so that $y_0^+\mskip-2mu wau$ is a path in~$G$.
Clearly $d(a,z^+)\ge2$ for all $z\in W$, since $a\notin V_1$.
In fact, $d(a,z^+)=2$ for all $z\in W$,
since if $d(a,z^+)\ge3$ for some $z\in W$ then
$d(v)+d(z^+)+d(a)\le\card{M_3(y_0^+)}+\card{W}-\card{W^+\cup\set{v,a}}=\card{M_3}-2$.

We shall prove that $d(w,x^+)\le2$ for all $x\in W$.
Assume that $d(w,x^+)\ge3$ for some $x\in W$.
Since $u\notin M_3\open$,
there is an $s\in N(u)\setminus M_3$.
Clearly $d(s,w)\ge3$ since $d(s,y_0^+)\ge4$.
Also $d(s,x^+)\ge3$, since if $d(s,x^+)=2$ then
$d(y_0^+)+d(v)+d(s)\le\card{M_3(x^+)}+\card{W}-\card{W^+\cup\set{v,s}}=\card{M_3(x^+)}-2$,
a contradiction.
But this means that
$d(x^+)+d(w)+d(s)\le\card{M_3(a)}-\card{\set{x^+,w,s}}=\card{M_3(a)}-3$,
a contradiction.
Thus $d(w,x^+)\le2$ for all $x\in W$,
which means that $v\in M_3\open(w)$.
Thus
$d(y_0^+)+d(u)+d(v)\le\card{M_3(w)}+\card{W}-\card{W^+\cup\set{u,v}}=\card{M_3(w)}-2$,
our final contradiction.
We can conclude that $C$ is a Hamilton cycle.
The proof of \cref{thm:localkappa} is complete.
\end{proof}

\begin{remark}
\Cref{thm:localkappa} is not a generalization of \cref{oldthm:kappa}
because there are 2-connected graphs satisfying the conditions of \cref{oldthm:kappa}
where some balls of radius~3 are not 2-connected.
An example of such a graph can be seen in
\cref{fig:threeballnottwoconnected} on page~\pageref{fig:threeballnottwoconnected}.
\end{remark}

\Cref{thm:localkappa} implies the following local analogue of \cref{oldthm:haggkvist}:
\begin{theorem}
\label{thm:localkappaonevertex}
Let $G$ be a connected finite graph on at least three vertices
such that
every ball of radius~3 in $G$ is 2-connected and
$d(u)\ge {1\over 3}\bigl(\card{M_3(v)}+\kappa(G_3(v))\bigr)$
for every vertex $v\in V(G)$
and for each interior vertex $u$ of $G_3(v)$.
Then $G$ is Hamiltonian.
\end{theorem}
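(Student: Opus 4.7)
The plan is to derive \cref{thm:localkappaonevertex} as an immediate corollary of \cref{thm:localkappa}, via the standard averaging step that converts a Dirac-type minimum-degree condition into the corresponding Ore-type sum-of-degrees condition. No new structural analysis of cycles should be needed, because the hypothesis about 2-connectivity of balls of radius~3 is already shared between the two statements.

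Concretely, I would fix an arbitrary vertex $v\in V(G)$ and an arbitrary triple $x,y,z$ of independent interior vertices of the ball $G_3(v)$. Applying the hypothesis of \cref{thm:localkappaonevertex} separately to each of $x$, $y$, and $z$ yields three inequalities of the form $d(w)\ge\tfrac{1}{3}\bigl(\card{M_3(v)}+\kappa(G_3(v))\bigr)$. Summing them gives
\[
d(x)+d(y)+d(z)\ge\card{M_3(v)}+\kappa\bigl(G_3(v)\bigr),
\]
which is precisely the sum-of-degrees hypothesis required by \cref{thm:localkappa}. Since this holds for every choice of $v$ and every independent triple of interior vertices of $G_3(v)$, and since every ball of radius~3 in~$G$ is assumed 2-connected, all the hypotheses of \cref{thm:localkappa} are satisfied, so $G$ is Hamiltonian.

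Because the argument is purely formal, I do not anticipate any obstacle: the deduction mirrors the classical way of recovering Dirac's theorem from Ore's theorem, and no part of the proof of \cref{thm:localkappa} or \cref{thm:localbondy} needs to be revisited. If anything, the only thing to be careful about is the quantifier: the hypothesis of \cref{thm:localkappaonevertex} must be applied to \emph{each} of $x,y,z$ separately, which is legitimate because each of them is assumed to be an interior vertex of $G_3(v)$.
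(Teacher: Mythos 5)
Your proposal is correct and matches the paper's approach: the paper derives \cref{thm:localkappaonevertex} directly from \cref{thm:localkappa}, and the averaging step you describe (applying the degree bound to each of $x,y,z$ as interior vertices of $G_3(v)$ and summing) is exactly the intended, essentially immediate deduction.
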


Note that the diameter of a graph satisfying the conditions of
\cref{oldthm:haggkvist} or \cref{oldthm:kappa} does not exceed~5 (see \cite{asratian07}).
In contrast with this,
for any integers $n\geq 4$ and $p\geq 3$ there is a $(3p-1)$-regular graph
of diameter~$n$ satisfying the conditions of \cref{thm:localkappa,thm:localkappaonevertex}
(for example, the graph $G(p,2n)$ in \cref{remark:local-but-not-global-ore-graphs}).


\section{A local analogue of Moon--Moser's theorem}
\label{sec:MM}

In this section we will use our localization method to find a local analogue of the following result:
\begin{theorem}[Moon and Moser~\cite{moon63}]
Let $G$ be a balanced bipartite graph with $2n$ vertices, $n\geq 2$,
such that $d(u)+d(v)>n$ for every pair of non-adjacent vertices $u$ and $v$ at odd distance in~$G$.
Then $G$ is Hamiltonian.
\end{theorem}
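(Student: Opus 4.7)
The plan is to adapt Ore's closure technique to the bipartite setting. A first observation is that in a connected bipartite graph with parts $A$ and $B$, two vertices lie at odd distance precisely when they are in different parts, so the hypothesis reduces to the familiar requirement $d(u)+d(v)>n$ for every non-adjacent pair $u\in A$, $v\in B$; I shall treat connectedness as tacit, consistent with the preceding localization theorems of the paper (if $G$ were disconnected, the hypothesis would be vacuous on cross-pairs in different components, but a short degree-bound argument across components rules out such configurations for $n\geq2$).

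The heart of the proof is a bipartite closure lemma: if $u\in A$ and $v\in B$ are non-adjacent in $G$ and $d(u)+d(v)\geq n+1$, then $G$ is Hamiltonian if and only if $G+uv$ is Hamiltonian. For the nontrivial direction, suppose $G+uv$ admits a Hamilton cycle. Either $G$ already has one, or the cycle traverses $uv$, giving $G$ a Hamilton $\pathto{u}{v}$-path $w_1w_2\dotsp w_{2n}$ with $w_1=u$, $w_{2n}=v$. By the bipartition $w_i\in A$ iff $i$ is odd, so neighbours of $u$ along the path sit at even indices and neighbours of $v$ at odd indices. Define
\[
T=\bigl\{i\in\{2,4,\dotsc,2n\}:w_i\in N(u)\bigr\},\qquad
T'=\bigl\{i\in\{2,4,\dotsc,2n\}:w_{i-1}\in N(v)\bigr\}.
\]
Then $|T|=d(u)$ and $|T'|=d(v)$: the index $2n$ is excluded from $T$ because $uv\notin E(G)$, and $2$ is excluded from $T'$ because $w_1=u\notin N(v)$. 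Both $T$ and $T'$ are subsets of the $n$-element set $\{2,4,\dotsc,2n\}$, so if $T\cap T'=\emptyset$ then $d(u)+d(v)\leq n$, contradicting the hypothesis. Hence some $i$ lies in $T\cap T'$, and the cycle $u\,w_i\,w_{i+1}\dotsp w_{2n}\,w_{i-1}\,w_{i-2}\dotsp w_1$ is a Hamilton cycle of $G$.

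Applying the closure lemma repeatedly, I would construct a supergraph $\hat G\supseteq G$ on the same bipartition, Hamiltonian iff $G$ is, such that no non-adjacent cross-pair in $\hat G$ has $d_{\hat G}$-sum at least $n+1$. Any cross-pair non-adjacent in $\hat G$ is also non-adjacent in $G$ and, by connectedness, at odd distance in $G$; hence $d_G(u)+d_G(v)\geq n+1$, and since adding edges only increases degrees, $d_{\hat G}(u)+d_{\hat G}(v)\geq n+1$, violating the termination condition unless no such pair exists. Thus $\hat G=K_{n,n}$, which is Hamiltonian for $n\geq2$, and consequently so is $G$.

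The main obstacle is the careful parity bookkeeping in the closure lemma; once the index sets $T$ and $T'$ are set up correctly, the counting is essentially forced by the fact that both sit inside a common set of size~$n$. Everything else—iterating the closure and reducing to the complete bipartite graph—is routine.
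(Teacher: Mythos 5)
The paper does not prove this statement at all---it is quoted as a classical result of Moon and Moser---so there is no internal proof to compare against; what you give is a self-contained proof, and its core is correct. Your bipartite closure lemma is the standard crossing argument done properly: on a Hamilton $\pathto{u}{v}$-path in $G$ the neighbours of $u$ occupy even positions and the neighbours of $v$ odd positions, your sets $T$ and $T'$ have sizes $d(u)$ and $d(v)$ and both live in an $n$-element index set, so $d(u)+d(v)\ge n+1$ forces a common index $i$ and hence the cycle $u\,w_i\dotsp w_{2n}\,w_{i-1}\dotsp w_1$; iterating the closure (only cross edges are ever added, so the graph stays bipartite on the same parts and degrees only grow) reduces the problem to $K_{n,n}$. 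This is a legitimate, arguably cleaner, route than Moon and Moser's original longest-path argument, and it is exactly the kind of Ore-to-bipartite transfer the statement invites.

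The one weak point is your parenthetical about disconnected graphs. Under the literal ``at odd distance'' phrasing, vertices in different components are at no finite distance, so the hypothesis is vacuous for them, and the disjoint union of two balanced complete bipartite graphs (e.g.\ two disjoint edges for $n=2$) satisfies the hypothesis vacuously while failing to be Hamiltonian; no ``short degree-bound argument'' can rule such configurations out, because there is nothing to contradict. The statement must be read, as in Moon and Moser's original formulation (and as the paper itself implicitly does when it asserts, in the proof of \cref{prop:moon-moser-local-equivalent}, that the Moon--Moser conditions force connectedness), as requiring $d(u)+d(v)>n$ for \emph{every} non-adjacent pair with $u,v$ in different colour classes. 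With that reading your remark becomes correct and easy to make precise: if $C_1,C_2$ are distinct components with $a_i=\card{A\cap C_i}$, $b_i=\card{B\cap C_i}$, then applying the hypothesis to cross pairs in different components gives $b_1+a_2\ge n+1$ and $a_1+b_2\ge n+1$, whose sum exceeds $2n$ (degenerate cases with an empty side of a component produce a vertex of degree~$0$ and fail even more easily). State that reduction explicitly, and the proof is complete.
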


We will first prove that the result of Moon and Moser is equivalent to the following proposition:
\begin{proposition}
\label{prop:moon-moser-local-equivalent}
Let $G$ be a connected finite balanced bipartite graph on at least four vertices
such that
\[d(u)+d(v)> 1+|N_2(u)|+|N_4(u)|+|N_6(u)|\]
for every vertex $u\in V(G)$
and for each vertex~$v$ of the ball $G_6(u)$ at odd distance $d(u,v)>1$ from~$u$.
Then $G$ is Hamiltonian.
\end{proposition}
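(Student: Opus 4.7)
The plan is to establish the equivalence by reducing the local condition to Moon and Moser's global condition and vice versa.

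\emph{From Moon--Moser to the proposition.} Assume $G$ is balanced bipartite on $2n$ vertices and satisfies $d(u)+d(v)>n$ for every non-adjacent pair at odd distance. For any $u\in V(G)$ and any $v\in G_6(u)$ at odd distance $d(u,v)>1$ from~$u$, the vertex $v$ lies in the partition class opposite to~$u$ and is non-adjacent to~$u$, so $d(u)+d(v)>n$. Since $\{u\}\cup N_2(u)\cup N_4(u)\cup N_6(u)$ is contained in $u$'s partition class, $n\geq 1+|N_2(u)|+|N_4(u)|+|N_6(u)|$. Combining these gives the local hypothesis of the proposition, so Moon and Moser's theorem directly yields the proposition's conclusion.

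\emph{From the proposition back to Moon--Moser.} Assume $G$ satisfies the local hypothesis. The plan is first to show that $G$ has diameter at most~$6$. Given this, each partition class of~$G$ equals $\{u\}\cup N_2(u)\cup N_4(u)\cup N_6(u)$ for any of its members~$u$, so $n=1+|N_2(u)|+|N_4(u)|+|N_6(u)|$, and every non-adjacent pair at odd distance lies in $G_6(u)$; applying the local hypothesis then recovers $d(u)+d(v)>n$ for all such pairs, so Moon and Moser's theorem gives Hamiltonicity.

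\emph{Bounding the diameter.} To prove diameter at most~$6$ I plan to mimic the maximum-ball argument in the proof of \cref{prop:equivbondy}. Pick $v^*\in V(G)$ maximizing $|M_6(v^*)|$, suppose for contradiction that $N_7(v^*)\neq\emptyset$, and choose a geodesic $v^*x_1x_2\dotsp x_7$ with $d(v^*,x_i)=i$. Bipartiteness forces the neighborhood of any vertex at distance~$i$ from $v^*$ to lie in $N_{i-1}(v^*)\cup N_{i+1}(v^*)$, so applying the local hypothesis to pairs such as $(v^*,x_3)$, $(v^*,x_5)$, $(x_3,v^*)$, and $(x_3,x_6)$ translates each degree-sum bound directly into a constraint on the sizes $|N_i(v^*)|$ (and similarly $|N_j(x_3)|$). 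Combining these, the plan is to deduce that $|M_6(x_3)|>|M_6(v^*)|$, contradicting the maximality of~$v^*$.

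\emph{Main obstacle.} The main difficulty is the bookkeeping in this last step: one must show that $M_6(x_3)$ picks up enough new vertices (such as $x_7$, and in general members of $N_7(v^*)$ at distance at most~$6$ from~$x_3$) to outweigh the vertices of $M_6(v^*)$ that are too far from~$x_3$, which necessarily lie in $N_4(v^*)\cup N_5(v^*)\cup N_6(v^*)$ and are constrained in number by the local condition. The bipartite parity constraints make each application of the local hypothesis translate cleanly into an inequality on the $|N_i(v^*)|$, but their combination into a comparison of ball sizes will require careful attention, just as in the analogous step of \cref{prop:equivbondy}.
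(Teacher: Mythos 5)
Your reduction framework is sound: the forward direction matches the paper, and if the diameter is at most~$6$ then each partition class is $\{u\}\cup N_2(u)\cup N_4(u)\cup N_6(u)$, so your local-to-global translation does recover Moon--Moser's condition (the paper in fact gets diameter at most~$4$, but $6$ would suffice for your argument). The problem is that the diameter bound is precisely the heart of the converse, and you have only planned it, not proved it; moreover the plan as described is doubtful. You propose to maximize $|M_6(v^*)|$ and deduce $|M_6(x_3)|>|M_6(v^*)|$ from applications of the hypothesis along a geodesic, imitating \cref{prop:equivbondy}. In \cref{prop:equivbondy} that comparison works because the hypothesis there bounds degree sums by $|M_4(v)|+2$, i.e.\ by the size of the very ball being compared, and the argument actually establishes the containment $M_4(v)\cup N_5(v)\subseteq M_4(x_3)$. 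Here the hypothesis only involves the even slices $|N_2(\cdot)|,|N_4(\cdot)|,|N_6(\cdot)|$ and says nothing about $|N_1(\cdot)|,|N_3(\cdot)|,|N_5(\cdot)|$, which also enter $|M_6(\cdot)|$; and the vertices of $M_6(v^*)$ that could escape $M_6(x_3)$ lie in $N_4(v^*)\cup N_5(v^*)\cup N_6(v^*)$, where the odd slice $N_5(v^*)$ is not constrained by the condition at all. So the ``bookkeeping'' you defer is not routine; as written there is a genuine gap.

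For comparison, the paper avoids ball comparisons entirely and proves diameter $\le4$ by a short direct argument: if $d(u_0,v_0)=5$, take a path $u_0u_1u_2v_2v_1v_0$, so $d(u_1,v_1)=3$; since $N(u_1)\cap N(v_0)=\emptyset$ and $N(v_1)\cap N(u_0)=\emptyset$, and these neighbourhoods sit inside the even slices of $G_6(u_0)$ resp.\ $G_6(u_1)$, one gets $d(u_1)+d(v_0)\le 1+|N_2(u_0)|+|N_4(u_0)|+|N_6(u_0)|$ and $d(v_1)+d(u_0)\le 1+|N_2(u_1)|+|N_4(u_1)|+|N_6(u_1)|$; summing these and subtracting the hypothesis applied to $(u_0,v_0)$ contradicts the hypothesis applied to $(u_1,v_1)$. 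You could adopt this argument (or supply some other complete proof of a diameter bound of at most~$6$), but until that step is actually established your proof is incomplete.
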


\begin{proof}
If $G$ satisfies the conditions of Moon--Moser's theorem then $G$ is connected and $n\geq 1+|N_2(u)|+|N_4(u)|+|N_6(u)|$ for each vertex $u\in V(G)$.
Therefore the conditions of \cref{prop:moon-moser-local-equivalent} hold.

Conversely, suppose that the conditions of \cref{prop:moon-moser-local-equivalent} hold in a balanced bipartite graph~$G$ with $2n$ vertices.
We will show that the diameter of~$G$ does not exceed~$4$.
Suppose to the contrary that the diameter of $G$ is bigger than~$4$. Then there exists a pair of vertices, $u_0$ and $v_0$, with $d(u_0,v_0)=5$.
By the conditions of the proposition,
\begin{equation}
\label{prop:moon-moser-local-equivalent:eq:1}
d(u_0)+d(v_0)> 1+|N_2(u_0)|+|N_4(u_0)|+|N_6(u_0)|.
\end{equation}
Consider a $(u_0,v_0)$-path $u_0u_1u_2v_2v_1v_0$. Since $d(u_0,v_0)=5$, the distance between $u_1$ and $v_1$ is three.
Therefore, by the condition of the proposition,
\begin{equation}
\label{prop:moon-moser-local-equivalent:eq:2}
d(u_1)+d(v_1)> 1+|N_2(u_1)|+|N_4(u_1)|+|N_6(u_1)|.
\end{equation}
Furthermore, $d(u_0,v_0)=5$ implies that
the vertices $u_1$ and $v_0$ have no common neighbors,
and similarly for $v_1$ and $u_0$.
This implies
\begin{equation}
\label{prop:moon-moser-local-equivalent:eq:3}
d(u_1)\leq 1+|N_2(u_0)|+|N_4(u_0)|+|N_6(u_0)|-d(v_0)
\end{equation}
and
\begin{equation}
\label{prop:moon-moser-local-equivalent:eq:4}
d(v_1)\leq 1+|N_2(u_1)|+|N_4(u_1)|+|N_6(u_1)|-d(u_0).
\end{equation}
Therefore
\begin{equation}
\label{prop:moon-moser-local-equivalent:eq:5}
d(u_1)+d(v_0)\leq 1+|N_2(u_0)|+|N_4(u_0)|+|N_6(u_0)|
\end{equation}
and
\begin{equation}
\label{prop:moon-moser-local-equivalent:eq:6}
d(v_1)+d(u_0)\leq 1+|N_2(u_1)|+|N_4(u_1)|+|N_6(u_1)|.
\end{equation}
Then \cref{prop:moon-moser-local-equivalent:eq:5} and \cref{prop:moon-moser-local-equivalent:eq:6} imply
\begin{equation}
\label{prop:moon-moser-local-equivalent:eq:7}
\begin{aligned}
d(u_0)+d(v_0)+d(u_1)+d(v_1)
&\leq 1+|N_2(u_0)|+|N_4(u_0)|+|N_6(u_0)|\\
	&\hphantom{{}={}}+1+|N_2(u_1)|+|N_4(u_1)|+|N_6(u_1)|.
\end{aligned}
\end{equation}
This and \cref{prop:moon-moser-local-equivalent:eq:1} imply
\begin{equation}
d(u_1)+d(v_1)< 1+|N_2(u_1)|+|N_4(u_1)|+|N_6(u_1)|
\end{equation}
which contradicts \cref{prop:moon-moser-local-equivalent:eq:2}.
Thus the diameter of $G$ does not exceed~$4$.
This means that $G_4(u)=G$ and $N_6(u)=\emptyset$ for each vertex $u\in V(G)$.
Therefore the conditions of \cref{prop:moon-moser-local-equivalent} are equivalent
to the conditions of Moon--Moser's theorem.
\end{proof}

One can show that if,
in accordance with step~2 of our localization method,
we replace the condition on the ball $G_6(u)$
in \cref{prop:moon-moser-local-equivalent} with a similar condition on the ball $G_4(u)$,
that is, with the condition
\[d(u)+d(v)>1+|N_2(u)|+|N_4(u)|
\text{ for each vertex~$v$ with $d(u,v)=3$,}\]
we obtain a result that
generalizes Moon--Moser's theorem.
Moving on to step~3, we will prove the following stronger result using the structure of balls:

\begin{theorem}
\label{thm:localMM}
A connected finite balanced bipartite graph $G$ on at least four vertices is Hamiltonian if
\[d(u)+d(v)> 1+|N_2(u)\cup N(v)|\]
for every pair of vertices $u,v\in V(G)$ with $d(u,v)=3$.
\end{theorem}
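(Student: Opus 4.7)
I would proceed by contradiction: suppose $G$ satisfies the hypothesis but has no Hamilton cycle. My first move is to establish that $G$ admits a Hamilton path. Take a longest path $P=v_0v_1\dotsp v_k$; by maximality, $N(v_0),N(v_k)\subseteq V(P)$. A useful preliminary observation is that $d(v_0)\ge2$: if $d(v_0)=1$, then for any vertex $v$ with $d(v_0,v)=3$ (at least one exists because $G$ is connected and non-Hamiltonian), the hypothesis would give $1+d(v)=d(v_0)+d(v)>1+|N_2(v_0)\cup N(v)|\ge 1+d(v)$, a contradiction. If $k<2n-1$, then connectivity provides a vertex $z\notin V(P)$ adjacent to $V(P)$, and I would run a Pósa rotation on $P$ from $z$, applying the distance-3 hypothesis to freshly created endpoints to force $k=2n-1$.

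With a Hamilton path $P=v_0v_1\dotsp v_{2n-1}$ in hand, the parts of $G$ are $A=\{v_0,v_2,\dotsc,v_{2n-2}\}$ and $B=\{v_1,v_3,\dotsc,v_{2n-1}\}$. Since $G$ is non-Hamiltonian, $v_0v_{2n-1}\notin E(G)$. The classical Ore trick (if $v_0v_i\in E$ and $v_{i-1}v_{2n-1}\in E$, then $v_0v_1\dotsp v_{i-1}v_{2n-1}v_{2n-2}\dotsp v_iv_0$ is a Hamilton cycle) shows the set $\{v_{i-1}:v_i\in N(v_0)\}\subseteq A$ is disjoint from $N(v_{2n-1})\subseteq A$, giving
\[d(v_0)+d(v_{2n-1})\le |A|=n.\]

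Since $v_0v_{2n-1}\notin E(G)$ and the vertices lie in different parts, $d(v_0,v_{2n-1})$ is odd and at least $3$. The clean case is $d(v_0,v_{2n-1})=3$: then the hypothesis gives
\[d(v_0)+d(v_{2n-1})>1+\card{N_2(v_0)\cup N(v_{2n-1})},\]
and combined with the Ore-style bound this forces $\card{N_2(v_0)\cup N(v_{2n-1})}\le n-2$. Since $v_0\in A\setminus\bigl(N_2(v_0)\cup N(v_{2n-1})\bigr)$ trivially, I would need to exhibit a \emph{second} vertex of $A$ outside this union. The main obstacle is precisely this combinatorial step: I expect it to come from refining the Ore injection $v_i\mapsto v_{i-1}$, observing that $v_2\in N_2(v_0)$ already (because $v_0v_1,v_1v_2\in E(P)$), and tracking the interplay along $P$ between $N(v_0)$, $N_2(v_0)$, and $N(v_{2n-1})$ to locate the missing deficient vertex. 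For the remaining case $d(v_0,v_{2n-1})\ge 5$, I would run Pósa rotations on $P$ to produce a different endpoint pair at distance exactly $3$ to which the same Ore bound and hypothesis apply, reducing to the clean case. The technical heart of the proof is thus the combinatorial accounting in the distance-$3$ case and the management of rotations when the endpoints are farther apart.
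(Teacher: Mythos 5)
The decisive gaps are exactly the two places where you appeal to ``Pósa rotations'' without a mechanism: (a) forcing a Hamilton path ($k=2n-1$), and (b) reducing the case $d(v_0,v_{2n-1})\ge5$ to endpoints at distance exactly~$3$. The hypothesis here is purely local --- it says something only about pairs at distance exactly~$3$ --- so a rotation step does not automatically hand you a usable pair: the endpoints of a longest (or Hamilton) path need never be at distance~$3$ (graphs satisfying the condition can have diameter~$6$, see \cref{rem:localMMdiam6}), and nothing in your sketch shows that some rotation sequence reaches such a pair, nor that rotations alone close a non-Hamilton longest path up to $2n$ vertices. The paper manufactures its distance-$3$ pairs differently: for a longest path $P=v_1v_2\dotsp v_n$, if $v_{i_p}$ is the \emph{last} neighbor of $v_1$ along $P$, then $d(v_1,v_{i_p+2})=3$ automatically, and the condition, in the equivalent form $d(v_1)\ge 2+|N_2(v_1)\setminus N(v_{i_p+2})|$ of \cref{lem:localMMlemma}, forces a crossing edge $v_{i_r-1}v_{i_p+2}$; rotating along it strictly increases the parameter $f(P)=i_p$, and iterating yields \cref{prop:localMM-path-not-longer-than-cycle} (a longest path exceeds some cycle by at most one vertex). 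Hamiltonicity is then proved not from a Hamilton path but by a second, more delicate iteration: assuming the longest cycle $C$ misses a vertex, one takes one missed vertex from each part, builds a pair of disjoint paths $(P_k,B_k)$ covering $V(C)\cup\{x_1,u_1\}$, and repeats the same rotation step until a path on $|V(C)|+2$ vertices appears, contradicting the proposition. This machinery is precisely what replaces your two unproved rotation claims, so as it stands your outline is missing the heart of the proof.

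A smaller point: in your ``clean case'' the bookkeeping is turned around. If a Hamilton path has endpoints at distance~$3$, the contradiction is immediate and needs no hunt for a deficient vertex: by the Ore trick the predecessors of $N(v_0)\setminus\{v_1\}$ are $d(v_0)-1$ vertices of $N_2(v_0)$ avoiding $N(v_{2n-1})$, so $|N_2(v_0)\cup N(v_{2n-1})|\ge d(v_0)+d(v_{2n-1})-1$, which already contradicts the hypothesis (the bound $d(v_0)+d(v_{2n-1})\le n$ is not even needed). Exhibiting a \emph{second} vertex of $A$ outside the union, as you propose, points the inequality the wrong way --- such a vertex is what the hypothesis gives you, not what produces a contradiction. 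That part is fixable; the unsubstantiated rotation steps (a) and (b) are not, without essentially redoing the paper's argument.
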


We begin with a simple lemma.
\begin{lemma}
\label{lem:localMMlemma}
The condition $d(u)+d(v)> 1+|N_2(u)\cup N(v)|$
is equivalent to
\[d(u)\geq 2+|N_2(u)\setminus N(v)|.\]
\end{lemma}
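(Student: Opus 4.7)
The plan is to observe that this is simply an algebraic manipulation; the hypothesis that $u, v$ are at distance~$3$ (or anything else about the bipartite structure) plays no role here. The only graph-theoretic facts I need are the trivial identity $d(v) = \card{N(v)}$ and the set-theoretic identity $\card{A \cup B} = \card{A \setminus B} + \card{B}$ applied to $A = N_2(u)$ and $B = N(v)$.

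First I would rewrite the right-hand side of the given inequality using inclusion-exclusion in the form
\[
\card{N_2(u) \cup N(v)} = \card{N_2(u) \setminus N(v)} + \card{N(v)} = \card{N_2(u) \setminus N(v)} + d(v).
\]
Substituting this into $d(u) + d(v) > 1 + \card{N_2(u) \cup N(v)}$ and cancelling $d(v)$ from both sides yields $d(u) > 1 + \card{N_2(u) \setminus N(v)}$, and all operations above are reversible, so the two inequalities are equivalent.

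Finally, since $d(u)$ and $\card{N_2(u) \setminus N(v)}$ are both integers, the strict inequality $d(u) > 1 + \card{N_2(u) \setminus N(v)}$ is equivalent to $d(u) \geq 2 + \card{N_2(u) \setminus N(v)}$, giving the claimed equivalence.

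There is no real obstacle here; the lemma is a one-line reformulation that will be convenient in the subsequent proof of \cref{thm:localMM}, where working with $N_2(u) \setminus N(v)$ rather than $N_2(u) \cup N(v)$ presumably simplifies the counting arguments.
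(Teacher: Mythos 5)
Your proof is correct: the disjoint decomposition $|N_2(u)\cup N(v)|=|N_2(u)\setminus N(v)|+|N(v)|$, cancellation of $d(v)=|N(v)|$, and integrality give exactly the claimed equivalence. The paper states this lemma without proof precisely because this one-line computation is the intended argument, so your approach matches it.
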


\begin{definition}
Let $ P=v_1v_2\dotsp v_n$ be a maximal path with a given direction (from $v_1$ to $v_n$) in a graph $G$ where $d(v_1)\geq 2$. We define a parameter $f(P)$ as follows:
If $N(v_1)=\{v_{i_1},v_{i_2},\dotsc,v_{i_p}\}$ where $i_1<i_2<\dotsb <i_p$, then $f(P)=i_p$. In particular, $f(P)=n$ if $v_1v_n\in E(G)$.
\end{definition}

\begin{proposition}
\label{prop:localMM-path-not-longer-than-cycle}
Let $G$ satisfy the conditions of \cref{thm:localMM}.
Then the length of any path of $G$ does not exceed the length of a longest cycle of~$G$.
\end{proposition}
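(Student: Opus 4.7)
We argue by contradiction: suppose $G$ contains a path longer than every cycle of $G$, and choose $P = v_1 v_2 \dotsp v_n$ to be such a path of maximum length. Then by maximality $N(v_1)\cup N(v_n)\subseteq V(P)$. If $v_1 v_n\in E(G)$, the cycle $v_1 v_2 \dotsp v_n v_1$ has length $n$, contradicting the choice of $P$; so we may assume $v_1 v_n\notin E(G)$.

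The next step is a standard Ore-type rotation. The existence of an index $i\in\{2,\dotsc,n-1\}$ with $v_1 v_{i+1}\in E(G)$ and $v_n v_i\in E(G)$ would yield the cycle
\[v_1\, v_{i+1}\, v_{i+2} \dotsp v_n\, v_i\, v_{i-1} \dotsp v_2\, v_1\]
of length $n$, again contradicting the choice of $P$. Hence no such $i$ exists, so the sets $N(v_1)$ and $\defset{v_i^+}{v_i \in N(v_n)}$ are disjoint subsets of $V(P) \setminus \{v_1\}$, yielding
\[d(v_1) + d(v_n) \leq n - 1.\]

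To contradict this bound I would invoke the hypothesis of \cref{thm:localMM} at a distance-$3$ pair. If $d(v_1, v_n) = 3$, then \cref{lem:localMMlemma} yields $d(v_1) \geq 2 + \card{N_2(v_1) \setminus N(v_n)}$, and the plan is then to count enough vertices in $N_2(v_1) \setminus N(v_n)$ using the structure of $P$---for instance, $v_3 \in N_2(v_1)$ always, and the successors along $P$ of the vertices in $N(v_n)$ furnish additional candidates, each of which lies outside $N(v_1)$ by the Ore disjointness established above---so as to push $d(v_1) + d(v_n)$ strictly above $n - 1$, producing the desired contradiction.

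The main obstacle I expect is arranging that the hypothesis applies, that is, that $d(v_1, v_n) = 3$. When $n$ is even, $v_1$ and $v_n$ lie in opposite partition classes, and any larger odd distance between them can be reduced to $3$ by rotating $P$ along a chord. When $n$ is odd, however, the endpoints of $P$ lie in the same partition class, so $d(v_1, v_n)$ is even and the hypothesis does not apply to $(v_1, v_n)$ directly; moreover the Ore cycle of length $n$ would then be odd and so impossible in a bipartite graph. In this case the target must be a cycle of length $n - 1$, and the rotation argument has to be recast on a sub-path such as $v_1 \dotsp v_{n-1}$ or $v_2 \dotsp v_n$ whose endpoints lie in opposite partition classes, with the hypothesis applied to a distance-$3$ pair like $(v_1, v_{n-1})$ or $(v_2, v_n)$.
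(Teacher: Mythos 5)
Your proposal has a genuine gap, and you have put your finger on it yourself: the entire plan hinges on applying the hypothesis of \cref{thm:localMM} to the endpoint pair $(v_1,v_n)$, which requires $d(v_1,v_n)=3$, and nothing in your argument arranges this. The claim that ``any larger odd distance can be reduced to $3$ by rotating $P$ along a chord'' is not justified (rotations change which vertex is an endpoint, but there is no argument that the endpoint distance can be driven down to $3$), and the odd-$n$ case is only flagged, not handled. The paper's proof sidesteps this entirely: it never looks at the pair of endpoints. Instead, among longest paths it chooses $P$ maximizing $f(P)=i_p$, the largest index of a neighbour of $v_1$ along $P$, and applies the hypothesis to the pair $(v_1,v_{i_p+2})$, which is \emph{automatically} at distance $3$ (the distance is odd by bipartiteness, at most $3$ via $v_1v_{i_p}v_{i_p+1}v_{i_p+2}$, and not $1$ by maximality of $i_p$). \Cref{lem:localMMlemma} then forces some predecessor $v_{i_r-1}$ of a neighbour of $v_1$ to be adjacent to $v_{i_p+2}$, and the resulting rotation produces a longest path on the same vertex set with strictly larger $f$, contradicting the extremal choice; hence $f(P)=2\lfloor n/2\rfloor$ and the cycle $v_1\overrightarrow{P}v_{2\lfloor n/2\rfloor}v_1$ is at least as long as $P$. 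This extremal-choice-plus-forced-chord mechanism is the missing idea.

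A second, quantitative problem: even when $d(v_1,v_n)=3$, your stated goal of pushing $d(v_1)+d(v_n)$ strictly above $n-1$ cannot be achieved from the hypothesis. Since $G$ is bipartite, $N_2(v_1)$ and $N(v_n)$ (for $n$ even) both lie in the colour class of $v_1$, so among path vertices $N_2(v_1)\cup N(v_n)$ contains only the odd-indexed ones; the condition $d(v_1)+d(v_n)>1+\card{N_2(v_1)\cup N(v_n)}$ therefore compares the degree sum to roughly half the path, never to $n-1$ (indeed the Ore disjointness itself already gives $d(v_1)+d(v_n)\le\lceil n/2\rceil$ here). Relatedly, the ``successors along $P$ of the vertices in $N(v_n)$'' that you propose as elements of $N_2(v_1)\setminus N(v_n)$ lie in the wrong colour class and cannot belong to $N_2(v_1)$. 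The correct use of the hypothesis, as in the paper, is an \emph{internal} contradiction rather than a bound against $n$: the predecessors $v_{i_2-1},\dotsc,v_{i_p-1}$ of the neighbours of $v_1$ all lie in $N_2(v_1)$, so if none of them were adjacent to the distance-$3$ partner $w$ one would get $\card{N_2(v_1)\setminus N(w)}\ge d(v_1)-1$, contradicting $d(v_1)\ge2+\card{N_2(v_1)\setminus N(w)}$; thus the hypothesis is used to manufacture a crossing chord for the rotation, not to beat the Ore bound numerically.
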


\begin{proof}
Let $P=v_1v_2v_3\dotsp v_n$ be a longest path of $G$ such that $f(P)\geq f(P')$ for any other longest path $P'$ of~$G$.
We will show that $f(P)=2t$ where $t=\lfloor\frac{n}{2}\rfloor$.

Let $N(v_1)=\{v_{i_1},v_{i_2},\dotsc,v_{i_p}\}$ where $i_1<i_2<\dotsb <i_p$. Thus $f(P)=i_p$.
Since $P$ is a longest path and $G$ is bipartite, $N(v_1)\subseteq \{v_2,v_4,\dotsc,v_{2t}\}$.
Suppose that $i_p<2t$.
Since $i_p$ is an even integer, $i_p+2\leq 2t$ and $d(v_1,v_{i_p+2})=3$.
Consider the set
\[A=N(v_{i_p+2})\cap \{v_{i_2-1},v_{i_3-1},\dotsc,v_{i_p-1}\}.\]
Clearly, $A\not=\emptyset$ because otherwise $|N_2(v_1)\setminus N(v_{i_p+2})|\geq d(v_1)-1$ and we obtain a contradiction:
\[d(v_1)\geq 2+| N_2(v_1)\setminus N(v_{i_p+2})|\geq 2+d(v_1)-1=d(v_1)+1.\]
Since $A\not=\emptyset$, there is an integer $r, 2\leq r\leq p$, such that $v_{i_r-1}v_{i_p+2}\in E(G)$.
Using the edge $v_{i_r-1}v_{i_p+2}$ we can construct a new longest path~$Q$ with origin $v_{i_p+1}$ and terminus $v_n$ where
$Q=v_{i_p+1}\overleftarrow Pv_{i_r}v_1\overrightarrow Pv_{i_{r-1}}v_{i_p+2}\overrightarrow Pv_n$.
Clearly, $N(v_{i_p+1})\subset V(Q)$, because otherwise one can extend $Q$ (from $v_{i_p+1}$) and obtain a longer path in~$G$.
Therefore, $Q$ is a longest path with $V(Q)=V(P)$ where $f(Q)\geq i_p+2>f(P)$, which contradicts the choice of~$P$.

Thus $f(P)=2t=2\lfloor\frac{n}{2}\rfloor$. This means that $|V(P)|-|V(C)|\leq 1$ for the cycle $C=v_1\overrightarrow Pv_{2t}v_1$,
that is, the length of $P$ does not exceed the length of~$C$.
Therefore the length of $P$ does not exceed the length of a longest cycle of~$G$. The proof of \cref{prop:localMM-path-not-longer-than-cycle} is complete.
\end{proof}

\begin{boldproof}[Proof of \cref{thm:localMM}]
If $d(u,v)\leq 2$ for each pair $u,v\in V(G)$, then $G$ is a complete balanced bipartite graph
and therefore $G$ has a Hamilton cycle.
Suppose that $G$ is a balanced bipartite graph which is not complete.
If $d(u)=1$ for some vertex~$u$,
then since $G$ is connected and balanced
and has at least four vertices
there is a vertex~$v$ such that $d(u,v)=3$.
But then $d(u)+d(v)=1+\card{N(v)}\ngtr1+\card{N_2(u)\cup N(v)}$,
a contradiction.
Thus $d(u)\geq 2$ for each $u\in V(G)$.

Let $C$ be a longest cycle of~$G$. Choose a direction $\overrightarrow C$ of $C$.
We will show that $V(C)=V(G)$,
using the fact that \cref{prop:localMM-path-not-longer-than-cycle}
implies that $G$ cannot contain a path with more than $\card{V(C)}+1$ vertices.
Suppose that $V(C)\not=V(G)$. Then every component of the graph $G-V(C)$ consists of an isolated vertex,
because otherwise there would be a vertex~$x$ on $C$ and two vertices, $y$ and $z$, outside of $C$ such that $xy, yz\in E(G)$ and the path $zyx\overrightarrow Cx^-$ has
$|V(C)|+2$ vertices.

Let $(V_1,V_2)$ be the bipartition of~$G$. Choose two vertices in $G-C$, $x_1\in V_1$ and $u_1\in V_2$, and two of their neighbors, $x_2$ and $u_2$, on $C$ such that $x_1x_2\in E(G)$,
$u_1u_2\in E(G)$, and no vertex in $u_2^+\overrightarrow Cx_2^-$ is adjacent to $x_1$ or $u_1$.
Clearly, $u_2^+\not=x_2$ because otherwise the
path $x_1x_2\overrightarrow Cu_2u_1$ has $|V(C)|+2$ vertices. Furthermore, $u_1x_1\notin E(G)$ because every component in $G-C$ consists of one vertex.
Denote the path $x_1x_2\overrightarrow Cu_2u_1$ by~$P$ and the path $u_2^+\overrightarrow Cx_2^-$ by~$B$. The origin $u_2^+$ of~$B$ we denote by $b_1$.
Since $x_1,u_2\in V_1$ and $u_1,x_2,b_1\in V_2$, the paths $P$ and $B$ has odd lengths.

We will construct a sequence $(P_1,B_1), (P_2,B_2),\dotsc,$ where $P_i$ and $B_i$ are two disjoint paths of odd length with
$V(P_i)\cup V(B_i)=V(C)\cup \{u_1,x_1\}$,
for every $i\geq 1$, which satisfy the following conditions:
\begin{enumerate}
\item $V(P_1)\subseteq V(P_2)\subseteq V(P_3)\subseteq \dotsb$
and $V(B_1)\supseteq V(B_2)\supseteq V(B_3)\supseteq\dotsb$,
\item the origin of $P_k$ belongs to $V_1$, the terminus of $P_k$ is $u_1$ and the second to last vertex of $P_k$ is $u_2$, for every $k\geq 1$,
\item the origin of $B_k$ is $u_2^+$ and the terminus belongs to $V_1$ for every $k\geq 1$,
\item the origin of $P_k$ is not adjacent to the origin of $B_k$ for any $k\geq 1$,
\item if $V(P_{k+1})=V(P_k)$ then $f(P_{k+1})>f(P_k)$, $k\geq 1$.
\end{enumerate}

Put $P_1=P$ and $B_1=B$. Suppose that we have constructed the pairs
$(P_1,B_1),\dotsc,(P_k,B_k)$, $k\geq 1$, which satisfy the conditions (i)--(v).
Let $P_k=v_1v_2\dotsp v_{2m}$
where $v_1\in V_1$, $v_{2m}=u_1$, $v_{2m-1}=u_2$,
and let
$B_k=b_1b_2\dotsp b_{2s}$, where
$s\geq 1$ and $b_1\in V_2$.
(Clearly, $m\geq 3$ because if $m=2$ then
$V(P_k)=\set{x_1,x_2,u_1,u_2}$,
so the path $x_1x_2x_2^-\overleftarrow Bu_2^+u_2u_1$ in~$G$
has $|V(C)|+2$ vertices.)

Let $N(v_1)=\{v_{i_1},v_{i_2},\dotsc,v_{i_p}\}$ where $i_1<i_2<\dotsb <i_p\leq 2m$.
Clearly, $i_p\not=2m$, that is, the origin $v_1$ of $P_k$ is not adjacent to the terminus $v_{2m}$ because otherwise, by (ii), $v_{2m-1}=u_2$ and $G$ contains the path
$v_{2m}v_1\overrightarrow P_kv_{2m-1}b_1\overrightarrow B_kb_{2s}$ with $|V(C)|+2$ vertices.
Thus $i_p<2m$. Then $d(v_1,v_{i_p+2})=3$ and the same arguments as in the proof of \cref{prop:localMM-path-not-longer-than-cycle} imply that there is
an integer $r, 2\leq r\leq p,$ such that $v_{i_r-1}v_{i_p+2}\in E(G)$.

Consider a path~$Q$ with origin $v_{i_p+1}$ and terminus $v_{2m}=u_1$ where
\[Q=v_{i_p+1}\overleftarrow P_kv_{i_r}v_1\overrightarrow P_kv_{i_{r-1}}v_{i_p+2}\overrightarrow P_kv_{2m}.\]
Clearly, $v_{i_p+1}\in V_1$ and $v_{i_p+1}$ has no neighbor in $V(G)\setminus V(C)$ because otherwise
if $zv_{i_p+1}\in E(G)$ and $z\in V(G)\setminus V(C)$ then, by property (ii),
$v_{2m-1}=u_2$ and $G$ contains the path
$zv_{i_p+1}\overrightarrow Qv_{2m-1}b_1\overrightarrow B_kb_{2s}$
with $|V(C)|+2$ vertices.
Therefore only the following two cases are possible:
\begin{case}
$v_{i_p+1}$ has a neighbor in~$B_k$.
\end{case}
Since $b_1\in V_2$ and $b_{2s}, v_{i_p+1}\in V_1$, the vertex $v_{i_p+1}$ is adjacent to a vertex $b_{2\ell+1}$ for some $\ell\geq 1$.
(Note that $\ell\not=0$ because otherwise $G$ contains the path $b_{2s}\overleftarrow B_kb_1v_{i_p+1}\overrightarrow Qv_{2m}$
with $|V(C)|+2$ vertices.)
Then we extend $P_k$ to $P_{k+1}$ and shorten $B_{k}$ to $B_{k+1}$ as follows:
\[P_{k+1}=b_{2s}\overleftarrow B_kb_{2\ell+1}v_{i_p+1}\overrightarrow Qv_{2m}
\text{ and }
B_{k+1}=b_1b_2\dotsp b_{2\ell}.\]
\begin{case}
$v_{i_p+1}$ has no neighbors in~$B_k$.
\end{case}
Then put $P_{k+1}=Q$ and $B_{k+1}=B_k$. We have $V(P_k)=V(P_{k+1})$ and $f(P_{k+1})>f(P_k)$, since $f(P_k)=i_p$ and $f(P_{k+1})\geq i_p+2$.

It follows from properties (i)--(v) that after a finite number of steps we will obtain a pair $(P_q,B_q)$ of disjoint paths of odd length with $|V(P_q)|+|V(B_q)|=|V(C)|+2$,
where the origin of $B_q$ is $b_1$, $P_q$ is
a maximal path of odd length with terminus $u_1$ and the second to last vertex $u_2$, and, moreover, $f(P_q)=|V(P_q)|$. This means that the origin of $P_q$ (we denote it by $a_1$) is adjacent to the terminus $u_1$.
Let $b_{2r}$ denote the terminus of~$B_q$. By property (ii), $u_2$ is the second to last vertex of $P_q$. Moreover, $u_2b_1\in E(G)$. But then $G$ contains the path
$u_1a_1\overrightarrow P_qu_2b_1\overrightarrow B_qb_{2r}$
with $\card{V(C)}+2$ vertices, a contradiction. The proof of \cref{thm:localMM} is complete.
\end{boldproof}

\begin{remark}
\label{rem:localMMdiam6}
The diameters of graphs satisfying the conditions of Moon--Moser's theorem do
not exceed~4. In contrast with this there is an infinite class of
bipartite graphs of diameter~$6$ that satisfy the condition
of \cref{thm:localMM}. Consider, for example, the graph $G(n)$ on $5n$
vertices, $n\geq 5$, which is defined as follows: its vertex set is
$\bigcup_{i=0}^6 V_i$, where $V_0,V_1,\dotsc,V_{6}$ are pairwise disjoint
sets of cardinality $|V_0|=|V_6|=1$, $|V_1|=|V_{5}|=n-1$, $|V_{i}|=n$ for $i=2,3,4$ and two vertices $x,y\in V(G(n))$ are
adjacent if and only if $x\in V_i$ and $y\in V_{i+1}$, for $i=0,1,2,3,4,5.$
\end{remark}

\begin{proposition}
The diameter of a graph satisfying the conditions of \cref{thm:localMM} can not exceed~6.
\end{proposition}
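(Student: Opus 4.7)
The plan is to argue by contradiction. Suppose the diameter of~$G$ is at least~$7$ and fix vertices $u_0,v_0$ with $d(u_0,v_0)=7$, together with a shortest path $u_0u_1u_2u_3v_3v_2v_1v_0$ joining them. Since any subpath of a shortest path is itself shortest, both $(u_0,u_3)$ and $(v_0,v_3)$ are pairs of vertices at distance exactly~$3$, so the hypothesis of \cref{thm:localMM}, and hence \cref{lem:localMMlemma}, applies to them.

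Applying \cref{lem:localMMlemma} to the pair $(u_3,u_0)$ gives $d(u_3)\ge 2+\card{N_2(u_3)\setminus N(u_0)}$. To exhibit many vertices in $N_2(u_3)\setminus N(u_0)$, I look at the neighbors of $v_3$ other than $u_3$. For such a vertex~$w$, bipartiteness together with $u_3v_3\in E(G)$ puts $w$ in the same part as~$u_3$, so $d(u_3,w)$ is even; the path $u_3v_3w$ shows $d(u_3,w)\le 2$, and $w\ne u_3$, so $d(u_3,w)=2$. Also, $d(u_0,v_3)=4$ forces $d(u_0,w)\in\set{3,5}$, so $w\notin N(u_0)$. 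Thus $N(v_3)\setminus\set{u_3}\subseteq N_2(u_3)\setminus N(u_0)$, and since $u_3\in N(v_3)$ this set has exactly $d(v_3)-1$ elements. The lemma then yields $d(u_3)\ge d(v_3)+1$.

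The identical argument with the roles of the two endpoints swapped---applying \cref{lem:localMMlemma} to $(v_3,v_0)$ and counting the neighbors of $u_3$ other than $v_3$ inside $N_2(v_3)\setminus N(v_0)$---produces $d(v_3)\ge d(u_3)+1$. Adding the two inequalities gives $0\ge 2$, the desired contradiction, so the diameter of $G$ cannot exceed~$6$.

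I do not anticipate any real obstacle; the whole argument rests on the observation that a shortest path of length~$7$ contains two overlapping length-$3$ subpaths centered at the adjacent pair $u_3v_3$, together with a routine parity/triangle-inequality check. The only point that needs care is that bipartiteness---not merely the bound $d(u_3,w)\le 2$---is what upgrades $w\in M_2(u_3)$ to $w\in N_2(u_3)$; without it the argument would not deliver enough vertices in $N_2(u_3)\setminus N(u_0)$.
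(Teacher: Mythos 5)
Your proof is correct and follows essentially the same route as the paper: both arguments take the two adjacent midpoints of a geodesic between vertices at distance~7 (each at distance~3 from its endpoint), apply \cref{lem:localMMlemma} to both distance-3 pairs, and derive the contradictory inequalities $d(u_3)\ge d(v_3)+1$ and $d(v_3)\ge d(u_3)+1$. The only difference is presentational: the paper counts via $N_2(u)\setminus N(v)=N_2(u)\cap\bigl(N_3(v)\cup N_5(v)\bigr)$, while you exhibit the containment $N(v_3)\setminus\set{u_3}\subseteq N_2(u_3)\setminus N(u_0)$ directly.
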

\begin{proof}
Let $G$ be a graph satisfying the conditions of \cref{thm:localMM},
and assume that $v$ and $v'$ are two vertices in~$G$ with $d(v,v')=7$.
Pick $u\in N_3(v)$ and $u'\in N_3(v')$ such that $uu'\in E(G)$.
Now, by using \cref{lem:localMMlemma} and the fact that $G$ is bipartite we get
\newcommand{\myeqdiff}{\hphantom{{}=1+d(u')}}
\begin{align*}
d(u)
&\ge2+\card{N_2(u)\setminus N(v)}\\
&=2+\card[\big]{N_2(u)\cap\bigl(N_3(v)\cup N_5(v)\bigr)}\\
&\ge1+\card[\big]{N(u')\cap\bigl(N_3(v)\cup N_5(v)\bigr)}\\
&=1+d(u')
\ge3+\card{N_2(u')\setminus N(v')}\\
&\myeqdiff=3+\card[\big]{N_2(u')\cap\bigl(N_3(v')\cup N_5(v')\bigr)}\\
&\myeqdiff\ge2+\card[\big]{N(u)\cap\bigl(N_3(v')\cup N_5(v')\bigr)}\\
&\myeqdiff=2+d(u).
\end{align*}
This is clearly a contradiction.
Thus the diameter of $G$ does not exceed~6.
\end{proof}


\section{Extensions to infinite locally finite graphs}
\label{sec:infinite}

Extending the notion of cycles, Diestel and K\"uhn~\cite{diestel04a}
defined \emph{circles} in~$|G|$ as the image of a homeomorphism which maps
the unit circle~$S^1\subset\mathbb{R}^2$ to~$|G|$.
The graph~$G$ is called Hamiltonian if there exists a circle in~$|G|$
that contains all vertices and ends of~$G$.
Such a circle is called a \emph{Hamilton circle} of~$G$.
For finite graphs~$G$, this coincides with the usual meaning,
namely that there is a Hamilton cycle of~$G$.

Kündgen, Li, and Thomassen~\cite{kundgen17} introduced
another concept for infinite locally finite graphs:
A closed curve
in the Freudenthal compactification~$|G|$ is called a \emph{Hamiltonian curve} of~$G$
if it meets every vertex of~$G$ exactly once (and hence it meets every end at least once).
They proved the following theorem:
\begin{theorem}[Kündgen, Li, and Thomassen~\cite{kundgen17}] 
\label{thm:kundgen17}
The following are equivalent for any locally finite graph~$G$.
\begin{enumerate}
\item For every finite vertex set~$S$, $G$~has a cycle containing~$S$.
\item $G$ has a Hamiltonian curve.
\end{enumerate}
\end{theorem}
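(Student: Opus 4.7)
The plan is to prove each implication separately.

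For the easier direction (ii) $\Rightarrow$ (i), fix a Hamiltonian curve $\sigma\colon S^1\to|G|$ and a finite vertex set $S\subseteq V(G)$. Since $\sigma$ meets every vertex exactly once, each $v\in S$ has a unique preimage $t_v\in S^1$, and these $\card{S}$ points split $S^1$ into arcs. For any pair $u,v$ of vertices of $S$ consecutive in the cyclic order induced by $\sigma$, the restriction of $\sigma$ to the subarc of $S^1$ between $t_u$ and $t_v$ is a continuous curve in $|G|$ from $u$ to $v$ avoiding $S\setminus\{u,v\}$. From the image of this restriction one extracts a simple (topological) arc in $|G|$ from $u$ to $v$ still avoiding $S\setminus\{u,v\}$, and a standard fact about the Freudenthal compactification of a locally finite graph says that any such arc contains an edge-path of $G$ between its endpoints. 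Concatenating these $\card{S}$ paths in cyclic order produces a closed walk meeting $S$, from which a cycle through $S$ can be extracted.

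For the main direction (i) $\Rightarrow$ (ii) I would use a compactness argument. Enumerate $V(G)=\{v_1,v_2,\dotsc\}$ and for each $n$ pick a cycle $C_n$ containing $\{v_1,\dotsc,v_n\}$. Each $C_n$ induces a cyclic ordering $\pi_n$ on $\{v_1,\dotsc,v_n\}$, and since there are only finitely many cyclic orderings of each finite initial segment, a diagonal pigeonhole (Kőnig-type) argument yields a subsequence $(C_{n_j})$ along which the induced ordering on every $\{v_1,\dotsc,v_k\}$ stabilizes. These stable orderings combine into a cyclic order $\pi$ on all of $V(G)$ whose finite restrictions are each realized by a cycle of $G$; by thinning the subsequence further one may also arrange that vertices consecutive in $\pi$ are joined by an edge of $G$, since they are eventually consecutive on the cycles $C_{n_j}$.

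The Hamiltonian curve is then obtained by parameterizing $V(G)$ as a countable subset of $S^1$ in the order $\pi$ and extending to a continuous map $\sigma\colon S^1\to|G|$: between two vertices consecutive in $\pi$ the corresponding open arc of $S^1$ is mapped homeomorphically onto the edge of $G$ joining them, while points of $S^1$ that are accumulation points of the parameter set correspond to no vertex and must be sent to ends of $G$. For such a point $t$, the vertices parameterized near $t$ form, for every finite vertex cut $T$ of $G$, a sequence eventually trapped in a single component of $G-T$, and the nested intersection of these components identifies a unique end $\omega$ to which $t$ is sent. The main obstacle will be verifying continuity at the accumulation points and ensuring that every end is met: continuity reduces to showing that for each basic open neighborhood $U$ of $\omega$, determined by a finite vertex cut $T$, a sufficiently small arc of $S^1$ around $t$ is mapped into $U$, which follows because the stabilization of the orderings forces the corresponding subpaths of the $C_{n_j}$ to lie in the component of $G-T$ representing $\omega$, and hence so does the limit arc; coverage of every end follows from compactness of $|G|$, since the image of $S^1$ is closed and already contains the dense subset $V(G)$.
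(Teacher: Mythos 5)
This theorem is imported from Kündgen, Li and Thomassen; the present paper states it without proof, so your argument can only be measured against the original, not against anything in this text. Your direction (i)~$\Rightarrow$~(ii) is the right kind of compactness construction and its compressed steps are repairable: consecutive vertices $u,v$ of the limit cyclic order $\pi$ are indeed adjacent (once $k$ exceeds the indices of $u$, $v$ and all neighbours of $u$, the successor of $u$ on $C_{n_j}$ towards $v$ would otherwise be a neighbour of $u$ of index greater than $k$, which is impossible), and continuity at accumulation parameters can be forced as you suggest, because any finite set $T$ together with finitely many vertices parameterized near the gap is realized in the stabilized cyclic order on a single cycle $C_{n_j}$, whose segment between two such vertices avoids $T$; hence the vertices near the gap eventually lie in one component of $G-T$, the same from both sides. (``Thinning the subsequence further'' is neither needed nor meaningful for the infinitely many consecutive pairs, but the adjacency claim itself is correct, and the identification of a consistent choice of components with a unique end is the standard directions--ends correspondence.)

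The genuine gap is in (ii)~$\Rightarrow$~(i). First, the ``standard fact'' you invoke is false: an arc in $|G|$ between two vertices need not contain a path of $G$ between its endpoints, since it may pass through ends --- two disjoint rays converging to a common end, together with that end, form such an arc containing no finite path between the rays' initial vertices. This particular step can be repaired: since an arc with vertex endpoints contains every edge it meets internally, the jumping arc lemma shows that $s_i$ and $s_{i+1}$ lie in the same component of $G$ minus the avoided vertices, which still yields a path $Q_i$ from $s_i$ to $s_{i+1}$ avoiding $S\setminus\{s_i,s_{i+1}\}$. The irreparable step, as written, is the last one: from the closed walk $Q_1Q_2\dotsm Q_k$ one cannot in general ``extract'' a cycle through $S$. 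Take a theta graph whose three branches contain $s_1$, $s_2$, $s_3$ in their interiors: consecutive $s_i$'s are joined by paths avoiding the third vertex, their concatenation is a closed walk through $S$, yet no cycle of the graph contains all of $S$. So the information you have extracted from the Hamiltonian curve (pairwise connectivity of cyclically consecutive elements of $S$ avoiding the rest of $S$) is strictly weaker than what is needed; a correct proof of this direction must use that the curve meets \emph{every} vertex of $G$, not merely every vertex of $S$, exactly once, and that is precisely the work in the Kündgen--Li--Thomassen argument which your sketch passes over.
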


\Cref{thm:kundgen17} gives a possibility to extend different results for finite
graphs to infinite locally finite graphs.
In \cite{asratian18b}, the authors used it to prove the following theorems:

\begin{theorem}[\cite{asratian18b}]
\label{thm:infinitechvatal}
Let $r$ be a positive integer and $G$ a connected, infinite, locally finite graph
such that $\kappa(G_r(u))\geq \alpha(G_{r+1}(u))$ for every vertex $u\in V(G)$.
Then $G$ has a Hamiltonian curve.
\end{theorem}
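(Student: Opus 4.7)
The plan is to invoke Theorem~\ref{thm:kundgen17}, which reduces the task to showing that for every finite vertex set $S\subseteq V(G)$ there is a cycle of~$G$ containing~$S$. A first easy observation is that the hypothesis implies $\kappa(G_r(u))\ge\alpha(G_{r+1}(u))\ge\alpha(G_r(u))$ for every $u\in V(G)$, since any independent set of $G_r(u)$ is also independent in $G_{r+1}(u)$. Hence by Chvátal--Erdős (Theorem~\ref{oldthm:chvatal}), every ball $G_r(u)$ with at least three vertices is already Hamiltonian, which supplies the basic Hamiltonicity resource.

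Given a finite $S$, I would fix a vertex $u_0\in V(G)$ and take $H=G_N(u_0)$ for an integer $N$ so large that $S\subseteq M_{N-r-1}(u_0)$; since $G$ is locally finite, $H$~is a finite graph. The crucial structural observation is that if $u$ is an interior vertex of $G_{N-r-1}(u_0)$, then the ball $G_{r+1}(u)$ computed in~$G$ is identical to the ball $G_{r+1}(u)$ computed in~$H$, so the local hypothesis $\kappa(G_r(u))\ge\alpha(G_{r+1}(u))$ transfers to~$H$ for all such~$u$. The plan is then to show that $H$ satisfies the global Chvátal--Erdős condition $\kappa(H)\ge\alpha(H)$, and conclude that $H$ has a Hamilton cycle which, in particular, contains~$S$.

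To bound $\alpha(H)$ against $\kappa(H)$, I would proceed as follows. Let $A$ be a maximum independent set of~$H$. Pick $a\in A$; then $\card{A\cap M_{r+1}(a)}\le\alpha(G_{r+1}(a))\le\kappa(G_r(a))$. Any $a'\in A\setminus M_{r+1}(a)$ is separated from~$a$ in $H$ by any minimum cut of $G_r(a)$, so one can iterate a localization argument across~$H$; the slack between the radii~$r$ and $r+1$ in the hypothesis is precisely what makes this iteration close. For $\kappa(H)$, any minimum separator sitting in the deep interior of~$H$ can be matched against the connectivity of an appropriate ball $G_r(u)$, and separators straying near the boundary of~$H$ are handled by enlarging~$N$ so that interior balls dominate.

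The main obstacle is making the localization rigorous: a priori, independent sets and separators in~$H$ are global objects not directly controlled by any single ball. One likely needs either an induction on the diameter of the witnessing configuration, or an entirely different approach that constructs the cycle through~$S$ by repeated cycle extensions inside balls (in the spirit of \cref{lem:localbondylemma,lem:localbondythm}), rather than a black-box application of Chvátal--Erdős to~$H$. The fact that the finite analogue with equal radii $\kappa(G_t(u))\ge\alpha(G_t(u))$ remains an open problem, as noted earlier in the paper, confirms that the gap between the radii $r$ and $r+1$ in the hypothesis is doing essential work and must be exploited explicitly.
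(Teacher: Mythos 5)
Your proposal is a plan rather than a proof, and its central step is not just unproven but false as stated. You propose to take $H=G_N(u_0)$ for large $N$ and to show that $H$ satisfies the global Chv\'atal--Erd\H{o}s condition $\kappa(H)\ge\alpha(H)$, then apply \cref{oldthm:chvatal} as a black box. But balls of large radius in a graph satisfying the local hypothesis need not satisfy the global Chv\'atal--Erd\H{o}s condition: consider the two-way infinite analogue of the graph in the footnote to \cref{oldthm:chvatal} (or of $G(p,2n)$ in \cref{remark:local-but-not-global-ore-graphs}), i.e.\ pairwise disjoint sets $V_i$, $i\in\mathbb{Z}$, each of size $n$, with two vertices adjacent iff they lie in $V_i\cup V_{i+1}$ for some $i$. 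For $n$ large compared with $r$ this graph satisfies $\kappa(G_r(u))\ge\alpha(G_{r+1}(u))$ for every $u$, yet for the ball $H=G_N(u_0)$ one has $\kappa(H)=n$ (remove one set $V_i$) while $\alpha(H)$ grows linearly in $N$, so $\kappa(H)\ge\alpha(H)$ fails for every $N$ large enough to contain a spread-out set $S$. This is exactly the point of localization: the conclusion must be reached without ever recovering a global Chv\'atal--Erd\H{o}s-type inequality on a large finite subgraph. Your fallback sketch (``iterate a localization argument across $H$,'' matching separators and independent vertices against balls) is not an argument, and you concede as much; so the essential content of the theorem is missing from the proposal.

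For comparison: this paper does not prove \cref{thm:infinitechvatal} but quotes it from \cite{asratian18b}; the proof there, like the proof of \cref{thm:infinitekappa} given here, starts exactly as you do, reducing via \cref{thm:kundgen17} to finding a cycle through an arbitrary finite set $S$, but then works with a longest (or otherwise extremal) cycle inside a ball of bounded radius around $S$ and uses the local condition $\kappa(G_r(u))\geq\alpha(G_{r+1}(u))$ directly in a fan/Menger-type cycle-extension argument, keeping all modifications of the cycle inside balls of bounded radius so that the process stays within a fixed finite ball and terminates. Your correct preliminary observations (the hypothesis passes to interior vertices of a large ball, and $\alpha(G_{r+1}(u))\ge\alpha(G_r(u))$ so each $G_r(u)$ is itself Hamiltonian) are compatible with that strategy, but they do not substitute for the extension argument, which is where the slack between the radii $r$ and $r+1$ is actually used.
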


\begin{theorem}[\cite{asratian18b}]
\label{localjung1}
Let $G$ be a connected, infinite, locally finite graph where
every ball of radius~2 is 2-connected and
$d(u)+d(v)\geq |M_2(x)|-1$ for every path $uxv$ with $uv\notin E(G)$.
Then $G$ has a Hamiltonian curve.
\end{theorem}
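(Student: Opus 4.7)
The plan is to apply \cref{thm:kundgen17}, which reduces the problem to the finite-combinatorial statement: for every finite vertex set $S \subseteq V(G)$, there is a cycle of $G$ containing $S$. The task is thus to construct, for each finite $S$, such a cycle in $G$.

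My first approach would be to exhibit a finite induced subgraph $H$ of $G$ containing $S$ that is Hamiltonian, so that the Hamilton cycle of $H$ serves as the required cycle of $G$ through $S$. A natural Hamiltonicity criterion to apply to $H$ is \cref{prop:localoreL0}, since $|N(u)\cup N(v)\cup N(x)|\le|M_2(x)|$ always (with the inequality strict whenever some vertex at distance~$2$ from $x$ is not dominated by $\{u,v\}$), and so the bound $d(u)+d(v)\ge|M_2(x)|-1$ from our hypothesis is \emph{almost} enough to imply the hypothesis of \cref{prop:localoreL0} in the portion of $H$ where vertices have their full $G$-degrees and radius-$2$ neighborhoods preserved. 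I would construct $H$ by starting from a finite connected subgraph containing $S$ and enlarging it iteratively: at each step I would adjoin enough of the graph so that every vertex $x$ lying on a path $uxv$ in $H$ with $uv\notin E(H)$ has $M_2(x)\subseteq V(H)$, ensuring $d_H(\cdot)=d_G(\cdot)$ and that balls of radius~$2$ in $H$ agree with those in $G$ for the relevant vertices.

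The main obstacle is twofold. First, the inequality $|N(u)\cup N(v)\cup N(x)|\le|M_2(x)|-1$ can fail when $N_2(x)$ is entirely dominated by $\{u,v\}$; in this degenerate case, the slack in our hypothesis is too small by one to invoke \cref{prop:localoreL0} directly. Second, boundary vertices of $H$ still have $d_H(\cdot)<d_G(\cdot)$, and for paths $uxv$ that use such vertices the Ore-type bound can genuinely fail. Addressing both issues is where the $2$-connectivity of every ball $G_2(x)$ must come in: it provides two internally disjoint short paths between any pair of vertices of a radius-$2$ ball, which can be used---in the spirit of the cycle-extension lemmas \cref{lem:localbondylemma} and \cref{lem:localbondythm} of \cref{sec:bondy}---either to re-route a would-be cycle around the boundary of $H$ through the interior, or to adapt the proof of \cref{prop:localoreL0} so as to absorb the missing degree in the degenerate case. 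I expect this re-routing step, rather than the reduction via \cref{thm:kundgen17}, to be the hardest technical part of the proof.
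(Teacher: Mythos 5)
Your opening move---reducing via \cref{thm:kundgen17} to the statement that every finite $S\subseteq V(G)$ lies on a cycle---is correct and is exactly how the paper handles its infinite results. But the heart of the proof is missing, and the specific route you propose cannot be completed as described. You want a finite induced subgraph $H\supseteq S$ that is itself Hamiltonian by \cref{prop:localoreL0}, enlarged until every vertex $x$ on a path $uxv$ of $H$ with $uv\notin E(H)$ has $M_2(x)\subseteq V(H)$. No finite $H$ can satisfy this closure condition: since $G$ is connected and infinite, every finite $H$ has boundary vertices with neighbors outside $H$, each enlargement creates new ones, and essentially every vertex of $H$ occurs as the middle vertex of some such path, so the iteration only terminates at $H=G$. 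Consequently truncated degrees really do violate the Ore-type hypothesis near the boundary, and this is not a technicality to be patched afterwards---it is the whole difficulty. On top of that, the $-1$ slack means even vertices with full $G$-degrees need not satisfy the hypothesis of \cref{prop:localoreL0}, as you note. Both problems are deferred to an unspecified ``re-routing'' step using the $2$-connectivity of balls; that step is the entire substance of the theorem and is not supplied. Note also that demanding a Hamilton cycle of $H$ is an unnecessary strengthening: only a cycle through $S$ is required, and insisting on Hamiltonicity of a truncated subgraph is precisely what forces you into the boundary trouble.

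The proof this theorem actually has (in the cited source, and the template this paper uses for \cref{thm:infinitekappa}) never passes to a finite subgraph. One fixes $a\in S$, sets $r=\max_{x\in S}d(a,x)$, and takes a longest cycle $C$ among the cycles of $G$ that are contained in a fixed ball $G_{r+c}(a)$ (for a constant $c$ determined by the locality of the extension arguments) and meet $G_{r+1}(a)$; local finiteness makes this a finite, nonempty family. If $C$ misses a vertex of $S$, cycle-extension lemmas in the style of \cref{lem:localbondylemma,lem:localbondythm}---proved with explicit locality, i.e.\ the longer cycle differs from $C$ only inside a ball of bounded radius around the relevant vertex and retains a vertex of $G_{r+1}(a)$---produce a longer cycle still lying in $G_{r+c}(a)$, contradicting the choice of $C$. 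The crucial payoff is that all degrees, all sets $M_2(x)$, and the $2$-connectivity of balls invoked along the way are those of $G$ itself, so the truncation issue your approach runs into never arises. To salvage your proposal you would have to replace the finite Hamiltonian subgraph by such a bounded-ball longest-cycle argument, together with extension lemmas tailored to the hypothesis $d(u)+d(v)\ge\card{M_2(x)}-1$ and $2$-connected balls of radius~$2$; as written, the proposal does not contain a proof.
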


We will use \cref{thm:kundgen17} to extend \cref{thm:localkappa,thm:localkappaonevertex}
to infinite locally finite graphs.
Although \cref{thm:localkappa} was proved by contradiction,
the proof can alternatively be outlined as follows:
We start with a cycle~$C$.
If it is not Hamiltonian,
then we extend~$C$ to a longer cycle~$C'$.
We repeat this extension procedure until the cycle becomes Hamiltonian.
The same idea can be used to construct a cycle that contains a given finite vertex set $S$
in an
infinite locally finite graph $G$,
although we require a bit more control over the extensions.

\begin{theorem}
\label{thm:infinitekappa}
Let $G$ be a connected, infinite, locally finite graph
such that
every ball of radius~3 in $G$ is 2-connected and
$d(x)+d(y)+d(z)\ge\card{M_3(v)}+\kappa\bigl(G_3(v)\bigr)$
for every vertex $v\in V(G)$ and
for all triples of independent interior vertices $x,y,z$ of $G_3(v)$.
Then $G$ has a Hamiltonian curve.
\end{theorem}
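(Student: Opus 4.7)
The strategy is to invoke \cref{thm:kundgen17}, which reduces the problem to showing that every finite vertex set $S\subseteq V(G)$ lies on some cycle of $G$. I fix such an $S$ and construct the desired cycle by iterating the extension machinery of \cref{sec:bondy}. Both \cref{lem:localbondylemma,lem:localbondythm} are purely local in their hypotheses and conclusions---the restrictions imposed on the output cycles $C'$ there are, as the authors note in the remark preceding \cref{lem:localbondylemma}, present specifically so that these lemmas remain usable in the infinite setting---so they apply verbatim under the hypotheses of \cref{thm:infinitekappa}.

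First I would pick $s_0\in S$ and take as initial cycle $C_0$ any cycle through $s_0$ inside the finite $2$-connected ball $G_3(s_0)$. To control the iteration I fix a sufficiently large integer $r$ and work inside the finite workspace $W:=\bigcup_{s\in S}M_r(s)$; this is finite because $G$ is locally finite. The iterative step, given a cycle $C$ with $S\not\subseteq V(C)$, is: pick $u\in S\setminus V(C)$ minimizing the distance from $u$ to $V(C)$, walk along a shortest $u$-to-$V(C)$ path to its last off-cycle vertex $u'$, and apply \cref{lem:localbondythm} at $u'$ (which is applicable because $u'$ has a neighbor on $C$ and, unless $u'=u$ is adjacent to $C$, also an off-cycle neighbor on the path). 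For $r$ chosen large enough that $G_{12}(u')\subseteq W$ throughout, $C'$ differs from $C$ only inside $W$, and so
\[\card{V(C')\cap W}-\card{V(C)\cap W}=\card{V(C')}-\card{V(C)}\ge 1.\]
Hence the iteration halts after at most $\card W$ steps at some cycle $C^*$.

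The main obstacle is to show that $S\subseteq V(C^*)$. If some $u\in S$ lies off $C^*$, non-extendability of the procedure at $u$ forces $N(u)\subseteq V(C^*)$: the alternative that $u$ has no neighbor on $C^*$ is ruled out because, $G$ being connected, a shortest $u$-to-$V(C^*)$ path of length $\ge 2$ would have an off-cycle penultimate vertex $u'$ with $N(u')\not\subseteq V(C^*)$, and \cref{lem:localbondythm} could have been applied at $u'$, contradicting termination. Thus $C^*$ behaves locally around every such $u$ exactly as a dominating cycle. At this point I would reproduce, inside the finite ball $G_3(u)$ and its bounded neighborhood, the degree-counting case analysis from the proof of \cref{thm:localkappa} that follows the reduction to dominating cycles. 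That argument uses only the hypothesis $d(x)+d(y)+d(z)\ge\card{M_3(v)}+\kappa(G_3(v))$ for independent interior triples of $G_3(v)$ together with $2$-connectivity of $G_3(v)$, and takes place entirely within balls of bounded radius; it therefore transfers to the infinite setting with essentially only notational bookkeeping, yielding the contradiction that forces $S\subseteq V(C^*)$.
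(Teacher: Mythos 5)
Your overall strategy is the same as the paper's: reduce to finding, for each finite $S$, a cycle through $S$ via \cref{thm:kundgen17}, and build that cycle by local extensions using \cref{lem:localbondythm} together with the machinery behind \cref{thm:localkappa}. The genuine gap is in the workspace/termination bookkeeping. You apply the extension at the vertex $u'$, the last off-cycle vertex of a shortest path from a missing $u\in S$ to $V(C)$. Such a $u'$ is adjacent to the current cycle, but nothing places it near $S$: all you know about the current cycle is that it lies in $W=\bigcup_{s\in S}M_r(s)$, so $u'$ may be at distance up to $r+1$ from $S$, and then $G_{12}(u')$ reaches distance $r+13$ from $S$ and is \emph{not} contained in $W$. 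Consequently no fixed $r$ can be ``chosen large enough that $G_{12}(u')\subseteq W$ throughout'': the radius you need depends on how far the cycle has drifted, which is itself only controlled through $r$ --- the choice is circular. Once the containment invariant fails, the identity $\card{V(C')\cap W}-\card{V(C)\cap W}=\card{V(C')}-\card{V(C)}$ fails, and mere growth of cycle length gives no termination in an infinite graph.

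The paper sidesteps exactly this by an extremal rather than iterative formulation: fix $a\in S$, $r=\max_{x\in S}d(a,x)$, and take a \emph{longest} cycle among the cycles of the finite ball $G_{r+12}(a)$ that contain a vertex of $G_{r+1}(a)$; it then shows that if $S\nsubseteq V(C)$ there is a vertex of $M_r(a)\setminus V(C)$ with a neighbor on $C$, and applies the extension (\cref{lem:localbondythm}, or the argument of \cref{thm:localkappa} when all neighbors lie on $C$) at such a vertex --- i.e.\ within distance $r$ of $a$ --- so the longer cycle stays inside $G_{r+12}(a)$ and still meets $G_{r+1}(a)$, contradicting maximality. To salvage your iteration you would have to anchor every extension at a vertex within bounded distance of $S$ (the paper's existence argument provides one), not merely at a vertex adjacent to the cycle, or simply replace the iteration by this extremal choice. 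A second, smaller point: your final step invokes the counting from \cref{thm:localkappa} to get a contradiction when $N(u)\subseteq V(C^*)$, but that argument really yields ``either a contradiction with the degree condition or a longer cycle differing from $C^*$ only inside a bounded ball around $u$''; since your termination only excludes the \cref{lem:localbondythm}-type extension, you must also build this second type of extension into the loop (or appeal to maximality, as the paper does) before the contradiction is available.
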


\begin{proof}
Let $S$ be a finite vertex set in~$G$.
We will show that there is a cycle of~$G$ containing all vertices of~$S$.
Pick a vertex $a\in S$ and let $r=\max_{x\in S}d(a,x)$.
Among all cycles in $G_{r+12}(a)$ that contain a vertex of $G_{r+1}(a)$,
let $C$ be a longest one.
Assume that $C$ does not contain all vertices of~$S$.
Then there is at least one vertex in $M_r(a)\setminus V(C)$ with a neighbor on $C$.
We will show that this leads to a contradiction.

\begin{case}
There is a vertex $v\in M_r(a)\setminus V(C)$ with $N(v)\subseteq V(C)$.
\end{case}
By repeating the arguments in
the proof of \cref{thm:localkappa},
we can extend~$C$ to a longer cycle $C'$ such that $V(C)\cup \{v\}\subseteq V(C')$.
Then $C'$ contains a vertex from $G_{r+1}(a)$ since $C$ contains~$v$.
An analysis of the proof of \cref{thm:localkappa} shows that
$C'$ only differs from~$C$ inside $G_3(v)$, that is, $C'$ is in $G_{r+12}(a)$.
Thus $C'$ is longer than $C$, is in $G_{r+12}(a)$, and contains a vertex from $G_{r+1}(a)$, a contradiction.

\begin{case}
Each vertex in $M_r(a)\setminus V(C)$ has a neighbor outside of $C$.
\end{case}
Let $u$ be a vertex in $M_r(a)\setminus V(C)$ with a neighbor on $C$ and $N(u)\nsubseteq V(C)$.
Using \cref{lem:localbondythm} we can extend~$C$ to a longer cycle~$C'$
that differs from~$C$ only inside~$G_{12}(u)$
and contains a neighbor of~$u$,
that is, $C'$ is contained in $G_{r+12}(a)$
and contains a vertex of~$G_{r+1}(a)$,
a contradiction.

\medskip
We can conclude that $C$ contains~$S$.
\Cref{thm:kundgen17} now implies that $G$ has a Hamiltonian curve.
\end{proof}

This result implies an extension of \cref{thm:localkappaonevertex}:

\begin{theorem}
\label{thm:infinitelocalkappaonevertex}
Let $G$ be a connected, infinite, locally finite graph
such that
any ball of radius~3 in $G$ is 2-connected and
$d(u)\ge {1\over 3}\bigl(\card{M_3(v)}+\kappa(G_3(v))\bigr)$
for every vertex $v\in V(G)$
and for each interior vertex $u$ of $G_3(v)$.
Then $G$ has a Hamiltonian curve.
\end{theorem}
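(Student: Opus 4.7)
The plan is to derive \cref{thm:infinitelocalkappaonevertex} as a direct corollary of \cref{thm:infinitekappa}, exactly mirroring the way \cref{thm:localkappaonevertex} was obtained from \cref{thm:localkappa} in the finite setting. The key observation is that a lower bound on each individual interior degree translates, by simple addition, into a lower bound on the sum of the degrees of any three independent interior vertices.

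First I would fix an arbitrary vertex $v\in V(G)$ and any three independent interior vertices $x,y,z$ of the ball $G_3(v)$. By hypothesis, each of $x$, $y$, and $z$ satisfies
\[
d(x),\,d(y),\,d(z)\ge\tfrac{1}{3}\bigl(\card{M_3(v)}+\kappa(G_3(v))\bigr),
\]
so adding these three inequalities yields
\[
d(x)+d(y)+d(z)\ge\card{M_3(v)}+\kappa\bigl(G_3(v)\bigr).
\]
Together with the assumption that every ball of radius~$3$ in $G$ is $2$-connected, this shows that $G$ satisfies the hypotheses of \cref{thm:infinitekappa}.

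Applying \cref{thm:infinitekappa}, we conclude that $G$ has a Hamiltonian curve, which completes the proof. There is no real obstacle here: the argument is a one-line implication, and all of the combinatorial work (extension of longest cycles inside $G_{r+12}(a)$, the use of \cref{lem:localbondythm}, and the invocation of \cref{thm:kundgen17}) has already been carried out in the proof of \cref{thm:infinitekappa}.
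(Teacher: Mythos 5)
Your proposal is correct and matches the paper's approach: the paper presents \cref{thm:infinitelocalkappaonevertex} as an immediate consequence of \cref{thm:infinitekappa}, exactly as you argue, since summing the individual degree bound over any three independent interior vertices of $G_3(v)$ yields the required triple-degree condition. Nothing further is needed.
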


\pagebreak[2]
Finally, we suggest the following conjecture:
\begin{conjecture}
Let $G$ be a connected, infinite, locally finite graph such that
every ball of radius~3 in $G$ is 2-connected and
\[d(x)+d(y)+d(z)\ge\card{M_3(v)}+\kappa\bigl(G_3(v)\bigr)\]
for every vertex $v\in V(G)$ and
for all triples of independent interior vertices $x,y,z$ of $G_3(v)$.
Then $G$ has a Hamilton circle.
\end{conjecture}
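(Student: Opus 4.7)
The plan is to upgrade \cref{thm:infinitekappa} from a Hamiltonian curve to a Hamilton circle by means of a compactness argument in the Freudenthal compactification~$|G|$. Fix an enumeration $v_1,v_2,\dotsc$ of $V(G)$ and let $S_n=\set{v_1,\dotsc,v_n}$. The proof of \cref{thm:infinitekappa} produces, for each~$n$, a cycle $C_n$ of~$G$ containing~$S_n$. Choosing an orientation of each $C_n$ induces a cyclic order on $S_n$, and by restriction a cyclic order on every $S_m$ with $m\le n$.

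Since there are only finitely many cyclic orders on each $S_m$, a standard König-style diagonal argument yields a subsequence $(C_{n_k})$ along which the induced cyclic orders on every $S_m$ stabilize to a common order $\sigma_m$. The nested family $\sigma_1,\sigma_2,\dotsc$ then determines a cyclic order $\sigma$ on all of $V(G)$. One defines a closed curve $\gamma\colon S^1\to|G|$ that visits the vertices of~$G$ in the order $\sigma$, traverses edges of~$G$ between consecutive vertices, and passes through each end~$\omega$ at the gap of~$\sigma$ determined by the ray tails of~$\omega$ in the components of $G-S_m$ (for each~$m$). If $\gamma$ meets each end exactly once, then $\gamma$ is a homeomorphic image of $S^1$ and hence a Hamilton circle.

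The main obstacle is precisely the injectivity of $\gamma$ on the ends of~$G$: a priori the construction could yield a Hamiltonian curve in which some end is visited more than once. To rule this out, I would refine the choice of the cycles $C_n$ given by \cref{thm:infinitekappa}. Concretely, among all cycles produced by that proof, I would select $C_n$ that is optimal with respect to an additional criterion --- for instance, one that minimizes, over all finite separators $T\subset V(G_{r_n}(v_1))$, the number of edges of $C_n$ that cross~$T$. If this optimality is preserved by the extension steps in \cref{lem:localbondylemma,lem:localbondythm}, then for every finite separator $T$ of~$G$ the cycles $C_{n_k}$ will cross $T$ a bounded number of times (ultimately exactly twice for every separator that splits off a single end), which forces $\gamma$ to enter and leave each end of~$G$ only once.

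Verifying this preservation is the technically delicate heart of the argument. One would need to prove strengthened versions of \cref{lem:localbondylemma,lem:localbondythm} that track how the re-routings performed inside a ball $G_{12}(u)$ or $G_7(z)$ affect the crossings of distant separators; since those re-routings are local, the hope is that for separators $T$ with $d(v_1,T)$ large enough compared to the relevant balls, the crossing count does not increase. Combining this local control with the diagonal compactness argument above should then yield the desired Hamilton circle.
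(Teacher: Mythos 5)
This statement is the paper's closing \emph{conjecture}: the authors do not prove it, and the strongest result they establish under these hypotheses is \cref{thm:infinitekappa}, which yields only a Hamiltonian curve via \cref{thm:kundgen17}. So your proposal should be judged on whether it actually closes the gap between ``Hamiltonian curve'' and ``Hamilton circle'' --- and it does not. Your compactness step (stabilizing cyclic orders on the sets $S_m$ along a subsequence of the cycles $C_n$) is sound as far as it goes, but it only reproves what is already known: it produces a closed curve through all vertices, i.e.\ essentially the implication from cycles-through-finite-sets to a Hamiltonian curve. The entire difficulty of the conjecture is the injectivity of the limit curve at the ends, since a Hamiltonian curve may pass through an end several times, and having a Hamiltonian curve is strictly weaker than having a Hamilton circle in general (this is why K\"undgen, Li and Thomassen introduced the weaker notion). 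Hence no soft limiting argument can succeed without using the hypotheses (the degree condition relative to $\card{M_3(v)}+\kappa(G_3(v))$ and 2-connectedness of balls) in an essential, quantitative way at the ends.

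The place where your argument would have to do real work is exactly the part you label a ``hope.'' First, the selection rule ``minimize, over all finite separators $T$, the number of edges of $C_n$ crossing $T$'' is not well defined: different separators are in competition, and you specify no ordering (lexicographic, weighted, or otherwise) under which a minimizer exists and is useful. Second, nothing in \cref{lem:localbondylemma,lem:localbondythm} controls crossing numbers of separators: those lemmas guarantee only that the new cycle differs from the old one inside $G_7(z)$ or $G_{12}(u)$, and a re-routing inside such a ball can increase the number of crossings of any separator meeting that ball; since the extension process is iterated unboundedly many times as $n\to\infty$, locality alone gives no uniform bound. Third, even a uniform bound on crossings is not enough --- to get a circle you need the limit curve to cross each separator that cuts off a single end exactly twice (eventually, uniformly in $n$), and you give no mechanism producing that. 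In short, the proposal reduces the conjecture to an unproved strengthening of the paper's lemmas that is at least as hard as the conjecture itself; a genuine proof would more likely have to follow the route of the known Hamilton-circle results (e.g.\ Heuer, Hamann--Lehner--Pott, Bruhn--Yu), working with finite contractions or the topological cycle space, where compatibility of the Hamilton cycles across the approximations is built in rather than hoped for.
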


\section*{Acknowledgment}
The authors thank Carl Johan Casselgren for helpful suggestions on this manuscript.


\end{document}